\newtheorem{theorem}{Theorem}
\newtheorem{corollary}[theorem]{Corollary}
\newtheorem{lemma}[theorem]{Lemma}
\theoremstyle{definition}
\newtheorem{definition}[theorem]{Definition}
\newtheorem{remark}[theorem]{Remark}
\newtheorem{example}[theorem]{Example}
\newtheorem{proposition}[theorem]{Proposition}
\numberwithin{theorem}{section}
\newcommand{\slideEq}{=_{\text{slide}}}
\newcommand{\alg}{\mathrm{pesh}}
\newcommand{\algorithm}{partial evacuation shuffling}
\newcommand{\coplacticEvac}{e}
\newcommand{\T}{T}
\newcommand{\MoNBar}{\overline{M}_{0,r}}
\newcommand{\shuffle}{\mathrm{switch}}
\newcommand{\hop}{\mathrm{hop}}
\newcommand{\crystal}{\mathrm{crystal}}
\newcommand{\simpalg}{\mathrm{alg}}
\newcommand{\esh}{\mathrm{esh}}
\newcommand{\weight}{\mathrm{wt}}
\newcommand{\shape}{\mathrm{sh}}
\newcommand{\coswitch}{\mathrm{coswitch}}
\newcommand{\ybox}{\text{\scriptsize \yng(1)}}
\newcommand{\SSYT}{\mathrm{SSYT}}
\newcommand{\LR}{\mathrm{LR}}
\newcommand{\DE}{\mathrm{DE}}
\title{Local algorithms for coplactic switching and evacuation of Young tableaux}
\author{Kelsey M. Brown and Derek Moran}
\date{}
\begin{document}

\maketitle

\begin{abstract}
    Tableau switching is a well studied bijection on pairs of skew Young tableaux which swaps their relative positions. This is achieved by successively sliding the entries of the inner tableaux through the outer one via jeu de taquin (JDT) slides. Tableau coswitching is a similar but \emph{coplactic} operation, meaning it commutes with any sequence of JDT slides. Coswitching is defined by first performing JDT rectification on the union of the tableaux, switching the resulting pair, then unrectifying the union. This definition requires us to perform large scale modifications to the skew shapes during the rectification and unrectification steps, which is both computationally taxing and obscures the effect of coswitching on the pair of skew tableaux. In \cite{1box_a}, Gillespie and Levinson define an algorithm which computes coswitching as a sequence of local moves, which do not alter the skew shapes of the tableaux, when one of the tableaux is a single box. 
    
    In this paper, we extend the results of Gillespie and Levinson (and Gillespie, Levinson, and Purbhoo for type B) to the case the two skew tableaux are of arbitrary size. We also describe multiple bijections on tableaux, each descending to the \emph{evacuation shuffling} (esh) operation on pairs of dual equivalence classes. We show how special cases of our local algorithm compute the Sch\"utzenberger involution on skew tableaux, as well as the wall-crossing and monodromy of certain covering spaces of the moduli space $\overline{M}_{0, r}(\mathbb{R})$. % \cite{speyer}, and show how each bijection may be computed as special cases of our local algorithm.
    %In fact, the local algorithm we define only requires one dual equivalence class to be represented by a Littlewood-Richardson tableau, and the other may be represented by any skew semistandard Young Tableau.

% careful about 'function' vs 'algorithm'
% use the word "wall-crossing" somewhere in the sentence about topology of M_{0,n}-bar or in the first paragraph
\end{abstract}

\section{Introduction}
%Background and relevant work: MTV (Shapiro-Shapiro) says there exists a finite(?) covering space such that fibers are real points. Speyer gave a description of covering space using DECGDs, which allowed him to prove several formal properties about esh. (So who first defined esh? Note that first definition of esh involves rotating the entire chain of LR tableaux and is computationally inconvenient.) However, this isn't desirable for actually performing computations, so Levinson and Gillespie (and Purbhoo in type B) set out to define a local algorithm (define local) which is more practical when computing esh. Their local algorithm is specific to the case when one of the DE classes (or LR tableau) is a single box.
An operation on Young tableaux is said to be \emph{coplactic} if it commutes with every sequence of jeu de taquin (JDT) slides. In this paper, we give a combinatorial rule for the coplactic analog of \emph{tableau switching} on pairs of skew semistandard Young tableaux. We call this operation \emph{tableau coswitching}; it is defined by first performing JDT rectification on the union of the tableaux, switching the resulting pair, then ``unrectifying'' the union (a precise definition is given below, see Def. \ref{def:coswitch}).

Our work is motivated by certain covering spaces that arise in real Schubert calculus, which we now briefly describe. Consider the \textit{rational normal curve}, which is the image of the Veronese embedding $\mathbb{P}^1\hookrightarrow\mathbb{P}^{n-1}=\mathbb{P}(\mathbb{C}^n)$, defined by $t\mapsto[1:t:\ldots:t^{n-1}]$. The \textit{maximally tangent} or \textit{osculating flag} with respect to this curve at a point $t$ is the complete flag $\mathscr{F}_t$ in $\mathbb{C}^n$ given by the row spans of the matrix of iterated derivatives
$\begin{bmatrix}
    (\frac{d}{dt})^{i-1}(t^{j-1})
\end{bmatrix}_{0 \leq i, j \leq n-1}$.
%
\begin{comment}
\[
\begin{bmatrix}
    (\frac{d}{dt})^{i-1}(t^{j-1})
\end{bmatrix}=
\begin{bmatrix}
    1 & t & t^2 & \ldots & t^{n-1}\\
    0 & 1 & 2t & \ldots & (n-1)t^{n-2}\\
    \vdots & \vdots & \vdots & \ddots & \vdots\\
    0 & 0 & 0 & \ldots & (n-1)!
\end{bmatrix}
\]
\end{comment}
%
Given a choice of partition $\lambda$ and a point $t$, we obtain a Schubert variety $\Omega_\lambda(\mathscr{F}_t)$ in the Grassmannian $\mathrm{Gr}(k,\mathbb{C}^n)$. We consider the intersection
\begin{equation} \label{eqn:schubert-intersection}
S=S(\lambda^{(1)},\ldots,\lambda^{(r)})=\Omega_{\lambda^{(1)}}(\mathscr{F}_{t_1})\cap\ldots\cap\Omega_{\lambda^{(r)}}(\mathscr{F}_{t_r}),
\end{equation}
where the osculation points $t_i$ are distinct and $\lambda^{(1)},\ldots,\lambda^{(r)}$ are partitions whose sizes sum to $k(n-k)$.

The Mukhin--Tarasov--Varchenko (MTV) theorem \cite{MTV}, previously the Shapiro--Shapiro conjecture, guarantees that when $t_1, \ldots, t_r$ are all real, the intersection \eqref{eqn:schubert-intersection} consists entirely of distinct real points, each occurring with multiplicity $1$. This is a surprising result, as in general, a system of real polynomial equations need not have all real solutions. %it is difficult to construct explicit choices of flags for which all solutions to the Schubert problem are real and distinct with multiplicity $1$. 
Work of Purbhoo \cite{purbhoo2009jeutaquinmonodromyproblem} and Speyer \cite{speyer} shows that, as the osculation points vary, the intersections \eqref{eqn:schubert-intersection} extend to a flat family $\mathcal{S} = \mathcal{S}(\lambda^{(1)}, \ldots, \lambda^{(r)})$ over the moduli space $\MoNBar$ of stable curves of genus $0$ with $r$ marked points $t_1, \ldots, t_r$. The real points $\mathcal{S}(\mathbb{R}) \to \MoNBar(\mathbb{R})$ moreover form a smooth covering space, of degree equal to the Littlewood--Richardson (LR) coefficient. Geometrically, the covering space can be viewed as the moduli space parameterizing the solution sets guaranteed by the MTV theorem, extended to the closure in which two or more of the chosen osculating flags collide. There has been a lot of ongoing and recent followup to the MTV theorem; see \cite{brazelton2022enricheddegreewronski, Karp_2023, karp2023universalpluckercoordinateswronski, betterproof}. % Notably, the topology of the space $\mathcal{S}(\mathbb{R})$ and related spaces recovers many foundational aspects of tableau combinatorics, including the LR rule and uniqueness of rectification. 

%{\color{red} Decided to reduce to brief mention of dual equivalence classes and phrase everything in terms of LR tableaux! Also change phrasing of `lift bijections from DE classes to tableaux' to `extend bijections from LR tableaux to all tableaux'}
%
The monodromy of the covering $\mathcal{S}(\mathbb{R}) \to M_{0, r}(\mathbb{R})$ can be described entirely in terms of tableau combinatorics, either using growth diagrams \cite{speyer} or skew tableaux \cite{Levinson_2017}. It uses Haiman's dual equivalence classes of skew tableaux \cite{dual_equiv}. %, or as operations on skew Littlewood-Richardson (LR) tableaux (Definition \ref{def:LR}). 
The core operation is the {\bf evacuation shuffling} ($\esh$) bijection on pairs of dual equivalence classes of skew tableaux $(X, T)$; see Def. \ref{def:evacuation-shuffling}. Whenever we write $(X, T)$, we assume the shape of $T$ extends that of $X$. The definition of $\esh$ involves large-scale modifications to the skew shapes of $X$ and $T$ via JDT rectification and ``un-rectification''. As such, computing $\esh$ by the definition is cumbersome and lengthy to compute in practice and obscures many structural properties of the bijection.

Our main result is a pair of algorithms to compute $\esh$ `locally', that is, without changing the skew shapes. Let $(X, T)$ be a pair of skew tableaux, with $X$ arbitrary and $T$ Littlewood-Richardson (see Sec. \ref{sec: LR-tab}), such that the shape of $T$ extends that of $X$. First, the \emph{hopping algorithm} (Def. \ref{def:type_a_hop}) computes $\esh(X, T)$ by a sequence of local moves resembling JDT slides. Second, the \emph{crystal algorithm} (Def. \ref{def:type-A-crystal-alg}) expresses $\esh(X, T)$ as a composition of Type A crystal operators.

%\begin{theorem}
%    The hopping and crystal algorithms compute $\esh$ on arbitrary pairs of dual equivalence classes.
%\end{theorem}

In fact, the operation $\esh$ on dual equivalence classes lifts to several different natural bijections on skew Young tableaux, including \textbf{coswitching} as defined above, \textbf{\algorithm}, and the \textbf{Sch\"utzenberger involution} or \textbf{coplactic extension of evacuation}, which we define in detail below. The last is of particular interest because it has applications to the representation theory of Lie algebras \cite{Fulton}, to Kashiwara crystals \cite{lenart2006combinatoricscrystalgraphsi}, and to cactus groups \cite{halacheva2016alexandertypeinvariantstangles, henriques2005crystalscoboundarycategories, rodrigues2020actioncactusgroupshifted}.
Our algorithms compute each of these bijections.
\begin{theorem}[Theorems \ref{thm: hop=esh} and \ref{thm: hop=crystal}]
    Our algorithms compute coswitching and \algorithm{} on pairs $(X, T)$ with $X$ arbitrary and $T$ LR, and the Sch\"{u}tzenberger involution on arbitrary skew tableaux, in terms of local moves.
\end{theorem}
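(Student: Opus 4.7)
The plan is to first show that the hopping algorithm is coplactic and descends to $\esh$ on dual equivalence classes, then to deduce that it agrees with coswitching, \algorithm{}, and the Sch\"utzenberger involution by a uniqueness principle, and finally to show that the crystal algorithm agrees with the hopping algorithm by the same principle.

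For the first step, I would proceed by induction on the number of boxes of $T$. The base case $|T|=1$ is exactly the result of Gillespie and Levinson \cite{1box_a}, which shows that a single hop implements $\esh$ when $T$ is a single box. For the inductive step, the natural approach is to peel off boxes of $T$ one at a time in the order dictated by its (reverse) reading word, and to show that the hopping algorithm on the full $T$ factors as a composition of one-box hops performed on progressively larger skew shapes. The crux is to verify that this factorization matches the corresponding decomposition of $\esh$, and to verify \emph{coplacticity}: that applying a JDT slide to $X$ commutes with a hop. Coplacticity is what allows us to compare the local, shape-preserving algorithm with the global, rectification-based definition of $\esh$, since it means the algorithm is determined by its value on any rectification.

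To handle coswitching, \algorithm{}, and the Sch\"utzenberger involution, I would use the following uniqueness principle: two coplactic bijections on skew tableaux that descend to the same bijection on dual equivalence classes must agree on every tableau, since a dual equivalence class is precisely a coplactic orbit. Coswitching is coplactic by construction, since it is built from JDT rectification, tableau switching on straight-shape tableaux, and un-rectification, and it descends to $\esh$ by Definition \ref{def:evacuation-shuffling}; the same holds for \algorithm{}. The Sch\"utzenberger involution on an arbitrary skew tableau arises as the special case of our setup in which $T$ is the LR filling whose associated $\esh$ move implements evacuation. Once the hopping algorithm is known to be coplactic and to descend to $\esh$, the uniqueness principle forces agreement with each of these operations on the nose. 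For the crystal algorithm, I would argue analogously: show it is coplactic, using the fact that Type A crystal operators commute with JDT slides, and that it descends to $\esh$, then apply uniqueness.

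The main obstacle is verifying coplacticity of the hopping algorithm. The hop moves are defined case-by-case on local configurations of entries, and showing that each case commutes with a JDT slide passing through the region requires a careful case analysis of how the slide path interacts with each local move, including the possibility that a slide alters which case applies. Once coplacticity is in hand, the rest of the argument is largely formal: induction on $|T|$ reduces descent to $\esh$ to the one-box case of \cite{1box_a}, and the uniqueness principle handles the identification with the three named operations on the level of tableaux.
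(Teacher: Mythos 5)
There are two genuine gaps here, and the second one undermines the overall strategy. First, your induction is set up on the wrong tableau: the one-box case of Gillespie--Levinson \cite{1box_a} is the case where the \emph{inner} tableau $X$ is a single box, not $T$, and the hopping algorithm naturally decomposes as each entry of $X$ hopping through $T$ in turn. Accordingly the paper inducts on $|X|$, peeling off one entry of $X$ per step (matching one switching step of $\coswitch$ against one entry completing Phase 1); peeling boxes off $T$ instead would replace the LR tableau $T$ by a chain of single boxes, which (as the wall-crossing table in Section \ref{sec:geometry} shows) changes which bijection is being computed and does not reduce to the cited one-box theorem.

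Second, your ``uniqueness principle'' is false as stated: a dual equivalence class is \emph{not} a coplactic orbit. The orbit of a tableau under JDT slides is its slide class (all tableaux with the same rectification), which is transverse to dual equivalence; and indeed $\coswitch$, $\alg$, and $\coplacticEvac$ are three \emph{distinct} coplactic bijections that all descend to $\esh$ on dual equivalence classes, so descent to $\esh$ cannot pin down the map on tableaux. This is exactly the subtlety your argument erases: the hopping algorithm applied directly to $(X,T)$ computes $\alg(X,T)$, and one only recovers $\coswitch(X,T)$ as $\hop(\coplacticEvac(X),T)$ (Theorem \ref{thm: hop=esh} and Corollary \ref{cor:pesh-recovery}); your principle would force all three named operations to coincide with the algorithm simultaneously, which is impossible. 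The correct use of coplacticity is the weaker reduction to rectified tableaux (two coplactic maps agreeing on a tableau agree on its whole slide class), after which a genuine computation is still required; the paper supplies it by comparing row data and transition data (Lemma \ref{permutations}) inside the induction on $|X|$, using that both maps preserve ballotness and weight of $T$ so that the outer tableaux have equal shape. Your treatment of the crystal algorithm inherits the same problem, whereas the paper proves Theorem \ref{thm: hop=crystal} by a direct phase-by-phase comparison with the hopping algorithm rather than by any descent argument.
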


The case where $X$ is a single box was previously computed by Gillespie--Levinson \cite{1box_a}. Gillespie, Levinson, and Purbhoo later gave an analogous algorithm in type B, for skew shifted tableaux; see \cite{1box_b}. Our results similarly extend to type B using either local moves or the crystal-like coplactic operators on skew shifted tableaux defined in \cite{typeb_crystals}; see Def. \ref{def:type-b-hop} and \ref{def:type-b-crystal} and Theorems \ref{thm:type-b-hop=coswitch} and \ref{thm:type-b-hop=crystal}.

Our algorithms improve the complexity of the calculation of coswitching two tableaux from a maximum of approximately $2|\alpha|(l(\beta)+l(\lambda))+|\beta|l(\lambda)$ steps to a maximum of approximately $|\beta|(l(\beta)+|\beta|)$ steps, as shown in Section \ref{sec:complexity}.

\subsection{Outline}

In Section \ref{sec:background}, we start by reviewing the combinatorial background required to define and prove our algorithms, with the goal of working up to Littlewood-Richardson tableaux and dual equivalence. We then briefly describe the relevant coplactic bijections and their relation to the topology of $\MoNBar$. In Section \ref{sec:main results}, we define two local algorithms and prove they agree with coswitching. We then describe multiple variations and applications of the local algorithms in Section \ref{sec:related-results}.
In Section \ref{sec:type B}, we briefly review the background of shifted tableau combinatorics before defining the analogous local operations in type B.

\subsection{Acknowledgements}

We would like to thank our advisors, Maria Gillespie and Jake Levinson, for introducing us to this project and their support. 
 We also thank Jacob McCann for his contributions, and Emily King and Ewan Davies for their feedback.

\section{Notation and background}\label{sec:background}

\subsection{Partitions and tableaux}

A \textbf{partition} $\lambda=(\lambda_1\geq \ldots \geq \lambda_k)$ is a weakly decreasing sequence of nonnegative integers. The \textbf{Young diagram} of a partition $\lambda$ is a left-justified collection of boxes where $\lambda_i$ is the number of boxes in the $i$-th row. We denote the \textbf{size} of a partition as $|\lambda|=\sum \lambda_i$ and define the \textbf{length} of a partition $\ell(\lambda)=k$. 
We assume throughout that $\lambda$ fits inside a $k\times(n-k)$ rectangle, meaning $\ell(\lambda)\leq k$ and $\lambda_i\leq n-k$ for all $i$.

If $\rho$ and $\lambda$ are partitions such that $\rho_i\leq \lambda_i$ for all $i$, we say $\lambda$ \textbf{contains} $\rho$ and write $\rho\subseteq \lambda$. If $\rho\subseteq \lambda$, we define a \textbf{skew shape} $\lambda/\rho$ as the result of removing the boxes of $\rho$ from the diagram of $\lambda$. In the case that $\rho=\varnothing$, we refer to $\lambda$ as a \textbf{straight shape}.

Any way of placing positive integers in the boxes of a diagram is called a \textbf{filling}. A (skew) \textbf{ semistandard Young tableau} is a filling of a skew shape such that entries are weakly increasing along the rows and strongly increasing down the columns. A semistandard Young tableau is \textbf{standard} if each entry $\{1,\ldots ,k\}$ is used exactly once. All Young tableaux are assumed to be semistandard unless stated otherwise.

The \textbf{shape} of a tableau $T$ refers to the skew shape associated with $T$ and is denoted $\shape(T)$. If $S$ and $T$ are tableaux with shapes $\lambda/\rho$ and $\mu/\lambda$ respectively, then we say $T$ \textbf{extends} $S$. A \textbf{chain of Young tableaux} is a sequence $(\T_1,\ldots,\T_k)$ such that each $\T_i$ extends $\T_{i-1}$.

A \textbf{word} is a string $w=w_1w_2\ldots$ in the symbols $\{1,2,\ldots\}$. The \textbf{(row) reading word} of a tableau $T$ is the word $w(T)$ obtained by reading off the entries of $T$ from left to right starting from the bottom row. The \textbf{$i,i+1$-subword} of a word $w$ is the result of deleting any entry in $w$ not equal to $i$ or $i+1$. The \textbf{weight} of $T$ is $\weight(T)=(n_1,\ldots,n_k)$ where $n_i$ is the number of $i$ entries in $T$.

Let $X$ and $T$ be a pair of tableaux such that $T$ extends $X$, and let $n$ be the largest entry of $X$. We take their union,  $X\sqcup T$, to be the result of adding $n$ to each entry of $T$ and treating the pair as a single tableau.
%do i have to say something about like treating them as a single tableau?

The \textbf{standardization} of a word $w = w_1 \ldots w_n$ is the word $std(w)$ formed by replacing the letters by $1,\ldots,n$, from least to greatest, with ties broken in reading order. The standardization of a tableau is defined by standardizing its reading word.

\subsection{Jeu de taquin and equivalence relations}

Given a skew tableau $\lambda/\mu$, an \textbf{inner corner} is a box contained in the (deleted) diagram of $\mu$ such that the boxes below and to the right of it are not in $\mu$. An \textbf{outer corner} is a box of $\lambda$ such that the boxes below and to the right of it are not in $\lambda$.

If $T$ is a skew tableau of shape $\lambda/\mu$, an \textbf{inner jeu de taquin (JDT) slide} of $T$ into this empty box is the result of the following procedure: first, of the two boxes to the right and below of the empty box, slide the one with the smaller entry into the empty space. Should the entries in those boxes be tied, slide the box below. This process repeats until the empty box is now an outer corner of $\lambda$. An \textbf{outer JDT slide} refers to the inverse of an inner slide. %Performing these inner slides on $\lambda/\mu$ until it is a straight shape tableau is referred to as \textbf{(inner) jeu de taquin}, or JDT.
% line above describes rectification

A tableau $T$ is \textbf{rectified} if it is of straight shape. The \textbf{rectification} of $T$ is the tableau $\mathrm{rect}(T)$ formed by applying inner JDT slides on $T$ in any order until we obtain a straight shape tableau. By what is frequently called the ``Fundamental Theorem of JDT", $\mathrm{rect}(T)$ does not depend on the order in which we apply the slides.

\begin{definition}\label{def:evacuation}
    Let $T$ be a rectified semistandard tableau with largest entry $n$. The Sch\"{u}tzenberger involution, or \textbf{evacuation} of $T$, $e(T)$, can be constructed as follows. Delete the unique entry in the upper left corner of $T$; let $i$ be its value. Rectify the remaining entries of $T$, then fill the emptied box with $n+1-i$ as part of a new tableau $S$. Then repeat the above steps until each entry of $T$ has been replaced. % The inverse of evacuation has the interesting property that after sliding in the first $i$ entries of $T$, we get the evactuation of the first $i$ entries of $T$.
    %it's worded wierdly rn but should we include the above? It relates more to my version of the proof so I guess it goes back to the question of which version are we using.
\end{definition}
\[
    \begin{tikzpicture}
    \node {
    \begin{ytableau}
        1 & 1 & 3\\
        2 & 2\\
        3 & 4
     \end{ytableau}};
     \draw[-{latex}] (1,0) -- ++(.5,0);
     \node  at (2.5,0) {
    \begin{ytableau}
        \nonumber & 1 & 3\\
        2 & 2\\
         3 & 4
     \end{ytableau}};
     \draw[-{latex}] (3.5,0) -- ++(.5,0);
     \node  at (5,0) {
    \begin{ytableau}
        1 & 2 & 3\\
        2 & 4\\
         3 & \nonumber
     \end{ytableau}};
     \draw[-{latex}] (6,0) -- ++(.5,0);
     \node  at (7.5,0) {
    \begin{ytableau}
        1 & 2 & 3\\
        2 & 4\\
        3 & \color{red} \textbf{\underline{4}}
     \end{ytableau}};
     \draw[-{latex}] (8.5,0) -- ++(.5,0);
     \node at (9.5,0) {$\ldots$};
     \draw[-{latex}] (10,0) -- ++(.5,0);
     \node  at (11.5,0) {
    \begin{ytableau}
        \color{red}\textbf{\underline{1}} & \color{red}\textbf{\underline{2}} & \color{red}\textbf{\underline{3}}\\
        \color{red}\textbf{\underline{2}} & \color{red}\textbf{\underline{3}}\\
         \color{red}\textbf{\underline{4}} & \color{red} \textbf{\underline{4}}
     \end{ytableau}};
    \end{tikzpicture}
\]

Two skew semistandard Young tableaux $S$ and $T$ are \textbf{slide equivalent} ($S \slideEq T$) if there exists a sequence of JDT slides that takes one tableau to the other. Two semistandard Young tableaux are \textbf{dual equivalent} ($S=_{\DE}T$) if they have the same shape as one another after applying any sequence of JDT slides. In this language, the fundamental theorem of JDT is effectively equivalent to the statement that any two rectified tableaux of the same shape are dual equivalent. With respect to the RSK correspondence, $S$ and $T$ are slide equivalent if and only if they have the same insertion tableaux and dual equivalent if and only if they have the same shape and recording tableaux.

A \textbf{dual equivalence class} is an equivalence class of Young tableaux under dual equivalence. We may also define the \textbf{rectification shape} of a dual equivalence class $D$ to be the shape of $\mathrm{rect}(T)$ for any $T\in D$.

Given a tuple of partitions $\lambda_1,\ldots,\lambda_r$ such that $\sum|\lambda_i|=k(n-k)$, a \textbf{chain of dual equivalence classes} is a sequence $(D_1,\ldots,D_r)$ such that $D_i$ has rectification shape $\lambda_i$. We use $\DE(\lambda_1,\ldots,\lambda_r)$ to refer to the set of all such chains.

For a more in depth discussion on dual equivalence, see \cite{dual_equiv}.

\subsection{Littlewood-Richardson tableaux}\label{sec: LR-tab}

A word $w=w_1\ldots w_n$ is \textbf{ballot} if when read from the beginning to any letter, the sequence $w_1\ldots w_j$ contains at least as many $i$'s as $i+1$'s.

A word $w=w_1\ldots w_n$, is \textbf{reverse-ballot}  if when read backwards from the end to any letter, the sequence $w_n \ldots w_{n-j}$ contains at least as many $i$'s as $i+1$'s. A tableau whose reading word is reverse-ballot is called \textbf{Littlewood-Richardson} (LR); however, we may also say the tableau itself is reverse-ballot.

For a given straight shape $\lambda$, the \textbf{highest weight} tableau with shape $\lambda$ is the tableau with the $i$-th row filled entirely with $i$ entries. Each dual equivalence class with rectification shape $\lambda$ has a unique \textbf{highest weight representative} which rectifies to the unique highest weight tableau of straight shape $\lambda$.

\begin{proposition}
    A tableau is Littlewood-Richardson if and only if it is the highest weight representative of its dual equivalence class.
\end{proposition}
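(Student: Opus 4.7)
The plan is to show that the reverse-ballot property of a tableau $T$ depends only on its rectification $\mathrm{rect}(T)$, and that among rectified (straight-shape) tableaux, exactly the highest weight tableaux are reverse-ballot. Combined with the fact that rectification is a bijection from a dual equivalence class of rectification shape $\lambda$ to tableaux of straight shape $\lambda$, this yields the claim: $T$ is LR iff $\mathrm{rect}(T)$ is the highest weight tableau of shape $\lambda$, which is the defining property of the highest weight representative.

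First, I would show that the reverse-ballot property is invariant under a single JDT slide. The key fact is that a JDT slide changes the reading word by an elementary Knuth relation (either $yxz \leftrightarrow yzx$ with $x < y \leq z$, or $xzy \leftrightarrow zxy$ with $x \leq y < z$). For each such relation, it is a short casework verification that reading backwards, every prefix has $\#i \geq \#(i+1)$ before the swap iff it does after. Since any two letters not involved in the swap have the same relative contribution, only the partial sums through the swapped triple need to be compared. Iterating over a rectifying sequence of slides, one concludes $T$ is reverse-ballot iff $\mathrm{rect}(T)$ is reverse-ballot.

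Second, I would verify directly that a straight-shape tableau $S$ of shape $\lambda$ is LR iff it is the highest weight tableau of shape $\lambda$. The highest weight tableau has reading word $k^{\lambda_k}(k-1)^{\lambda_{k-1}}\cdots 1^{\lambda_1}$, whose reverse is $1^{\lambda_1}2^{\lambda_2}\cdots k^{\lambda_k}$, which is clearly reverse-ballot. Conversely, if $S$ is rectified and LR, the reverse-ballot condition applied at the beginning of the reverse reading (i.e.\ the end of the usual reading word) forces the top row of $S$ to be filled with $1$'s, since any entry $>1$ at the end of the reverse reading word would immediately violate $\#1\geq \#2$. Stripping off the top row and inducting on the number of rows (using semistandardness to ensure each row's entries are at least the row index) shows row $i$ must be filled with $i$'s.

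Combining the two steps yields the equivalence $T$ LR $\iff$ $\mathrm{rect}(T)$ is the highest weight tableau of shape $\lambda$, which is exactly the definition of $T$ being the highest weight representative of its dual equivalence class. The main obstacle is the Knuth-invariance of reverse-ballot in the first step; while it is classical, it must be carried out with care about conventions, since the paper's reading word is read bottom-to-top and the ballot condition is imposed on the reverse reading order.
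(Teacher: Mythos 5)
Your proof is correct, but note that the paper does not actually prove this proposition --- it simply records it as ``well known'' --- so there is no in-paper argument to match; your write-up supplies a genuine, self-contained proof. Two points of care in your first step. First, a single JDT slide changes the reading word by a \emph{sequence} of elementary Knuth transformations, not by one, so you should phrase the reduction as: reverse-ballotness is invariant under each elementary Knuth move, hence under Knuth equivalence, hence under rectification. Second, in the casework the ``partial sums through the swapped triple'' must include the shared constraints at \emph{both} boundaries of the triple: e.g.\ for the relation $(i{+}1)(i)(i{+}1)\leftrightarrow(i{+}1)(i{+}1)(i)$ (type $yxz\leftrightarrow yzx$ with $x=i$, $y=z=i{+}1$), the intermediate suffix condition for the left word does not follow from the other word's intermediate conditions alone, but does follow from the condition at the suffix containing the whole triple, which both words share; once you use those boundary constraints the verification closes in every case. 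It is worth knowing that there is a shorter route entirely within the paper's own toolkit: under the paper's bracketing convention, a word is reverse-ballot exactly when every $E_i$ is undefined, i.e.\ when it is crystal-highest-weight; since the $E_i$ are coplactic, $T$ has this property iff $\mathrm{rect}(T)$ does, and a straight-shape tableau killed by all $E_i$ is the highest weight filling. Your Knuth-relation argument buys independence from crystal machinery at the cost of the delicate casework; the crystal argument is essentially immediate given Section~2.5.
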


This is well known and provides a fixed representative for each dual equivalence class. In fact, we have a canonical bijection between chains of Littlewood-Richardson tableaux with weights (and rectification shapes) $\lambda_1\ldots \lambda_r$ and chains of dual equivalence classes with rectification shapes $\lambda_1\ldots \lambda_r$. This will allow for us to use Littlewood-Richardson tableaux for all of our computations.

\begin{center}
    $\LR(\lambda_1,\ldots,\lambda_r)\cong \DE(\lambda_1,\ldots,\lambda_r)$
\end{center}

For more on the general background of tableau combinatorics, we direct readers to Fulton \cite{Fulton}.

\subsection{Switching operations on tableaux and dual equivalence classes}\label{sec:switching-ops}

For this section, we assume $X$ and $T$ are semistandard Young tableaux such that $T$ extends $X$, and $D_1$ and $D_2$ are their respective dual equivalence classes. we will now define several operations which switch the relative positions $X$ and $T$.

\begin{definition}
    \textbf{Tableau switching}, also known as shuffling, a pair of tableau is achieved by performing successive inward JDT slides on $T$ in the order determined by the standardization order of $X$ from largest to smallest.
\end{definition}

\[
    \begin{tikzpicture}
        \node at (0,0) {
        \begin{ytableau}
            \nonumber & & \color{blue}1 & \color{blue}2 & \color{red}\underline{\textbf{1}}\\
            \nonumber & \color{blue}2 & \color{blue}2 & \color{red}\underline{\textbf{3}}\\
            \color{blue}1 & \color{red}\underline{\textbf{2}} & \color{red}\underline{\textbf{3}}
        \end{ytableau}};
        \node at (2.5,.3) {$\shuffle$};
        \draw[-{latex}] (1.5,0) -- ++(2,0);
        \node at (5,0) {\begin{ytableau}
            \nonumber & & \color{red}\underline{\textbf{1}} & \color{red}\underline{\textbf{3}} & \color{blue}2\\
            \nonumber & \color{red}\underline{\textbf{2}} & \color{red}\underline{\textbf{3}} & \color{blue}1\\
            \color{blue}1 & \color{blue}2 & \color{blue}2
        \end{ytableau}};
    \end{tikzpicture}
\]
Switching descends to an operation on dual equivalence classes, which we may compute by switching any representatives of $D_1$ and $D_2$.

\begin{definition}
    A function on either Young tableaux or dual equivalence classes is \textbf{coplactic} if it commutes with any sequence of JDT slides. We note that switching is \textit{not} coplactic.
\end{definition}

We will introduce several coplactic operations related to switching in this section, and a more general class of coplactic operators in Section \ref{sec:coplactic-operators}.

\begin{definition}\label{def:coplactic_evacuation}
    Suppose $T$ has shape $\lambda/\mu$. Choose any rectified Young tableau $S$ with shape $\mu$. The \textbf{Sch\"{u}tzenberger involution}, or \textbf{coplactic extension of evacuation} of $T$, $e(T)$, is the result of conjugating evacuation by rectification as follows:

    \begin{itemize}
        \item \textbf{Rectify}. Compute $\shuffle(S,T)=(T',S')$.
        \item \textbf{Evacuate}. Compute $e(T')=T''$.
        \item \textbf{Unrectify}. Compute $\shuffle(T'',S')=(S'',T''')$.
    \end{itemize}

    We then define $\coplacticEvac(T) := T'''$, which does not depend on the choice of $S$.
    If $T$ is rectified, the definition reduces to Def. \ref{def:evacuation}. Hence, we use evacuation to also refer to its coplactic extension. By abuse of notation, we may write the coplactic extension of evacuation as
    \[
    e(T) = \mathrm{rect}^{-1} \circ e \circ \mathrm{rect}(T).
    \]
    We note that this involution is used in the theory of crystal graphs to show that crystals are self-dual as posets, see \cite[Theorem 5.11]{lenart2006combinatoricscrystalgraphsi}.
\end{definition}

% \begin{definition}\label{def:coplactic_evacuation}
%     Suppose $T$ has shape $\lambda/\mu$. Choose any rectified Young tableau $S$ with shape $\mu$. We define the coplactic extension of evacuation ($\coplacticEvac$) of $T$ to be the result of conjugating evacuation by rectification as follows:

%     \begin{itemize}
%         \item \textbf{Rectify}. Compute $\mathrm{rect}(S,T)=(T',S)$
%         \item \textbf{Evacuate}. Compute $\epsilon(T')=T''$
%         \item \textbf{Unrectify}. Compute $\mathrm{rect}(T'',S')=(S'',T''')$.
%     \end{itemize}

%     This gives us $\coplacticEvac(T)=(T''')$ as defined above.
% \end{definition}

\begin{remark}
    Let $w$ be a word. Then the evacuation of $w$ can be computed by evacuating the corresponding insertion tableau under the RSK correspondence (see \cite{Fulton}).
\end{remark}

We may construct multiple coplactic operations on Young tableaux by conjugating a function by rectification as described above; in each case, the output does not depend on the choice of auxiliary tableau $S$. To extend rectification to pairs of tableau, we define $\mathrm{rect}(X,T)$ to be the two parts of $\mathrm{rect}(X\sqcup T)$. We may similarly rectify a pair of dual equivalence classes by choosing tableau representatives for our computations, then taking the dual equivalence classes of the output.

\begin{definition}\label{def:evacuation-shuffling}
    The coplactic extension of switching a pair of rectified dual equivalence classes is called \textbf{evacuation shuffling} (esh), a coplactic involution defined as
    \begin{center}
        $\esh(D_1,D_2)=\mathrm{rect}^{-1}\circ \shuffle\circ \mathrm{rect}(D_1,D_2)$.
    \end{center}
\end{definition}

We may also extend esh to chains of dual equivalence classes, thereby reversing the order of some consecutive subset of dual equivalence classes. This is achieved by rectifying $D_i,\ldots,D_j$, then successively switching $D_i$ past $D_{i+1},\ldots,D_j$, then switching $D_{i+1}$ past the remaining $D_{i+2},\ldots,D_j$, and so on until the order has reversed. Afterwards, we unrectify the resulting sequence $D_j,\ldots,D_i$.

\begin{center}
    $\esh_{i,j}(D_1,\ldots,D_r)=(D_r,\ldots,D_{i-1},D_j,\ldots,D_i,D_{j+1},\ldots,D_r)$.
\end{center}

% However, our focus is on pairs of Young tableaux and only use this extension for geometric motivation.

We now define several lifts of esh to functions on pairs of tableaux. Our first is tableau coswitching, which most closely resembles the definition of esh:

\begin{definition}\label{def:coswitch}
    \textbf{Tableau coswitching} (coswitch) is the coplactic extension of tableau switching from the case where the inner tableau is of straight shape, giving us
    \begin{center}
        $\coswitch(X,T)=\mathrm{rect}^{-1}\circ \shuffle\circ \mathrm{rect}(X,T)$.
    \end{center}
\end{definition}

\[
    \begin{tikzpicture}
        \node at (0,0) {
        \begin{ytableau}
            a & b & \color{blue}1 & \color{blue}2 & \color{red}\textbf{\underline{1}}\\
            c & \color{blue}2 & \color{blue}2 & \color{red}\textbf{\underline{3}}\\
            \color{blue}1 & \color{red}\textbf{\underline{2}} & \color{red}\textbf{\underline{3}}
        \end{ytableau}};
        \node at (2.25,.3) {$\mathrm{rectify}$};
        \draw[-{latex}] (1.5,0) -- ++(1.5,0);
        \node at (4.5,0) {\begin{ytableau}
            \color{blue} 1 & \color{blue} 1 & \color{blue} 2 & \color{blue} 2 & \color{red} \textbf{\underline{1}} \\
            \color{blue} 2 & \color{red} \textbf{\underline{3}} & \color{red} \textbf{\underline{3}} & b\\
            \color{red} \textbf{\underline{2}} & a & c
        \end{ytableau}};
        \node at (6.75,.3) {$\shuffle$};
        \draw[-{latex}] (6,0) -- ++(1.5,0);
        \node at (9,0) {\begin{ytableau}
            \color{red} \textbf{\underline{1}} & \color{red} \textbf{\underline{3}} & \color{red} \textbf{\underline{3}} & \color{blue} 2 & \color{blue} 2 \\
            \color{red} \textbf{\underline{2}} & \color{blue} 1 & \color{blue} 1 & b\\
            \color{blue} 2 & a & c
        \end{ytableau}};
        \node at (11.25,.3) {$\mathrm{unrectify}$};
        \draw[-{latex}] (10.5,0) -- ++(1.5,0);
    \end{tikzpicture}\\
    \begin{tikzpicture}
        \node at (5,0) {\begin{ytableau}
            \nonumber & & \color{red} \textbf{\underline{3}} & \color{red} \textbf{\underline{3}} & \color{blue} 2\\
            \nonumber & \color{red} \textbf{\underline{1}} & \color{blue} 1 & \color{blue} 2\\
            \color{red} \textbf{\underline{2}} & \color{blue} 1 & \color{blue} 2
        \end{ytableau}};
    \end{tikzpicture}
\]

Coswitching can also be extended to chains of tableaux in the same way esh does; specifically we rectify a chain of tableaux $T_i,\ldots T_j$, then switch $T_i$ past the remaining $T_{i+1},\ldots T_j$, and repeat until the order is reversed, then unrectify the resulting chain of tableaux.

\begin{definition}
    Our next involution is \textbf{evacuation} from Definition \ref{def:coplactic_evacuation}. By abuse of notation, we take $\coplacticEvac(X,T)$ to be the two parts of $\coplacticEvac(X\sqcup T)$. (It is a fact that the result is naturally of the form $T' \sqcup X'$, giving a pair of tableaux with the desired rectification shapes.)
\end{definition}

\[
    \begin{tikzpicture}
        \node at (0,0) {
        \begin{ytableau}
            \nonumber & & \color{blue}1 & \color{blue}2 & \color{red}\underline{\textbf{1}}\\
            \nonumber & \color{blue}2 & \color{blue}2 & \color{red}\underline{\textbf{3}}\\
            \color{blue}1 & \color{red}\underline{\textbf{2}} & \color{red}\underline{\textbf{3}}
        \end{ytableau}};
        \node at (2.5,.3) {$\mathrm{evacuate}$};
        \draw[-{latex}] (1.5,0) -- ++(2,0);
        \node at (5,0) {\begin{ytableau}
            \nonumber & & \color{red}\underline{\textbf{1}} & \color{red}\underline{\textbf{2}} & \color{blue}2\\
            \nonumber & \color{red}\underline{\textbf{1}} & \color{blue}1 & \color{blue}1\\
            \color{red}\underline{\textbf{3}} & \color{blue}1 & \color{blue}2
        \end{ytableau}};
    \end{tikzpicture}
\]

While $\coplacticEvac{}$ evacuates the slide classes of $X$ and $T$, switching and $\coswitch$ing both preserve the slide equivalences of the tableaux. Our final lift of esh will instead evacuate the slide class of the inner tableau and preserve that of the outer.

\begin{definition}
    We define \textbf{\algorithm{}} ($\alg$) to be 
    \[\alg(X,T)=\coswitch(\coplacticEvac(X),T).\]
\end{definition}

\begin{center}
    \begin{tikzpicture}
        \node at (0,0) {
        \begin{ytableau}
            \nonumber & & \color{blue}1 & \color{blue}2 & \color{red}\underline{\textbf{1}}\\
            \nonumber & \color{blue}2 & \color{blue}2 & \color{red}\underline{\textbf{3}}\\
            \color{blue}1 & \color{red}\underline{\textbf{2}} & \color{red}\underline{\textbf{3}}
        \end{ytableau}};
        \node at (2.5,.3) {$\alg$};
        \draw[-{latex}] (1.5,0) -- ++(2,0);
        \node at (5,0) {\begin{ytableau}
            \nonumber & & \color{red}\underline{\textbf{3}} & \color{red}\underline{\textbf{3}}& \color{blue}2\\
            \nonumber & \color{red}\underline{\textbf{1}}& \color{blue}1 & \color{blue}1\\
            \color{red}\underline{\textbf{2}} & \color{blue}1 & \color{blue}2
        \end{ytableau}};
    \end{tikzpicture}
\end{center}

Every operation defined so far is an involution, except for \algorithm{} which has order 4. We also observe that $\alg{}$ has the same effect on $X$ as $\coplacticEvac{}$ and the same effect on $T$ as $\coswitch{}$, see the example above where the blue tableau above matches that of $\coplacticEvac{}$ and the red underlined tableau matches that of $\coswitch$.

\subsection{Connection to smooth covers of \texorpdfstring{$\MoNBar$}{M0,n-bar}}\label{sec:geometry}

We briefly discuss where our tableau bijections arise in geometry of certain covering spaces of the Deligne--Mumford moduli space $\MoNBar(\mathbb{R})$ of real $r$-marked stable curves of genus $0$. See \cite{1box_b, 1box_a, Levinson_2017, speyer} for details; the goal of this section is simply to highlight where each bijection arises in such a covering.

Fix integers $k \leq n$ and a tuple $\lambda_1, \ldots, \lambda_r$ of partitions such that $\sum |\lambda_i| = k(n-k)$. Speyer \cite{speyer} introduced a smooth covering space
\[\mathcal{S} = \mathcal{S}(\lambda_1, \ldots, \lambda_r)(\mathbb{R}) \to \MoNBar(\mathbb{R})\]
whose monodromy is described by (variations of) evacuation shuffling on dual equivalence classes and tableaux. Specifically, the interior $\MoNBar(\mathbb{R})$ is the disjoint union of $(r-1)!/2$ contractible cells, corresponding to arrangements of $1, \ldots, r$ on a necklace (i.e. on a circle, up to rotation and reflection). Above each such cell, the corresponding sheets of $\mathcal{S}$ are indexed by chains of dual equivalence classes. For the ordering $\sigma(1), \ldots, \sigma(r)$, the sheets of $\mathcal{S}$ are in bijection with $\DE(\lambda_{\sigma(1)}, \ldots, \lambda_{\sigma(r)})$. An important special case is when several or all of the partitions are single boxes; a chain of dual equivalence classes of types $\ybox_1, \ybox_2, \ldots, \ybox_t$ is just the data of a (standard) Young tableau of size $t$.

The \textbf{cactus group} $J_{r-1}$ is generated by elements $s_{ij}$ satisfying certain cactus relations (see e.g. \cite{henriques2005crystalscoboundarycategories}). Each $s_{ij}$ corresponds to a path from one cell of $\MoNBar$ to an adjacent one by reversing the order of the $i$-th through $j$-th points around the circle. This allows us to generate the fundamental groupoid of $\MoNBar$ via compositions of the $s_{ij}$'s.
% The fundamental groupoid of $\MoNBar(\mathbb{R})$ is then generated by certain paths $s_{ij}$ that cross from one such cell to another, corresponding to reversing the order of the $i$-th through $j$-th points around the circle. 
Concretely, reversing the ordering from 
\[1, \ldots, i-1, \underbrace{i, \ldots, j}, j+1, \ldots, r \hspace{3em} \mathrm{to} \hspace{3em} 1, \ldots, i-1, \underbrace{j, \ldots, i}, j+1, \ldots, r\]
corresponds (by monodromy on the sheets of $\mathcal{S}$) to the bijection $\esh_{i,j}$ on chains of dual equivalence classes
\[
\mathrm{esh}_{i,j}:\DE(\lambda_1, \ldots, \lambda_r) \xrightarrow{\sim} 
\DE(\lambda_1, \ldots, \lambda_{i-1}, \lambda_j, \ldots, \lambda_i, \lambda_{j+1} \ldots, \lambda_r).
\]
%defined on a chain $(D_1, \ldots, D_r)$ by applying esh to $D_i \sqcup \cdots \sqcup D_j$. %(It is a fact that the result is naturally of the form $Y_j \sqcup \cdots \sqcup Y_i$, giving a chain with the desired rectification shapes.)
Because of the canonical bijection between dual equivalence classes and Littlewood-Richardson tableaux, we may use Littlewood Richardson tableaux in place of dual equivalence classes and coswitching in place of esh (as coswitching is the only coplactic operation which preserves ballotness).
\[
\mathrm{\coswitch}_{i,j}:\LR(T_1, \ldots, T_r) \xrightarrow{\sim} 
\LR(T_1, \ldots, T_{i-1}, T_j, \ldots, T_i, T_{j+1} \ldots, T_r).
\]

In this setting, pesh and evacuation arise as special cases of coswitching, where we represent a chain of single boxes as a skew standard Young tableau.

\begin{center}
{\renewcommand{\arraystretch}{1.8}
\begin{tabular}{|c|c|c|}
    \hline
    Wall-crossing & Tableau data & Bijection\\
    \hline
    $(\ldots, \lambda, \mu, \ldots) \leadsto (\ldots, \mu, \lambda, \ldots)$ & $X,T\in \LR$ & $\coswitch(X,T)$\\
    \hline
    $(\ldots,\underbrace{\ybox_1, \ldots, \ybox_t},\ldots) \leadsto (\ldots,\underbrace{\ybox_t, \ldots, \ybox_1},\ldots)$ & $X\in\SSYT$ & $e(X)$\\
    \hline
    $(\ldots,\underbrace{\ybox_1, \ldots, \ybox_t,\lambda},\ldots) \leadsto (\ldots,\underbrace{\lambda, \ybox_t, \ldots, \ybox_1},\ldots)$ & $X\in\SSYT$, $T\in\LR$ & $\alg(X,T)$\\
    \hline
\end{tabular}}
\end{center}
% \begin{enumerate}
%     \item Reversing a pair $(\ldots, \lambda, \mu, \ldots) \leadsto (\ldots, \mu, \lambda, \ldots)$ is computed by $\coswitch$ing a pair of Littlewood-Richardson tableaux $(X, T)$ of rectification shapes $\lambda$ and $\mu$.
%     \item Reversing a dual equivalence class past a chains of single boxes, as in
%     \[(\ldots, \underbrace{\ybox_1, \ldots, \ybox_t,\lambda}, \ldots) \leadsto (\ldots, \underbrace{\lambda, \ybox_t, \ldots, \ybox_1}, \ldots)\]
%     is computed by applying $\alg{}$ to a pair $(X, T)$ where $X$ is a (standard) Young tableau and $T$ is a Littlewood-Richardson tableau with shape $\lambda$.
%     \item Reversing a chain of boxes $(\ldots, \ybox_1, \ldots, \ybox_t, \ldots)$ is computed by applying $\coplacticEvac{}$ to the corresponding (standard) Young tableau.
% \end{enumerate}

\begin{example}
    Consider the covering space $\mathcal{S} = \mathcal{S}(\alpha, \beta, \gamma, \delta) \to \overline{M}_{0, 4}(\mathbb{R})$, where $|\alpha| + |\beta| + |\gamma| + |\delta| = k(n-k)$ for $k=3$ and $n=8$, of degree equal to the corresponding Littlewood-Richardson coefficient.

    Let $C_{abcd} \subset M_{0, 4}(\mathbb{R})$ denote the cell on which the four marked points are in the circular order $a,b,c,d$. The path $s_{23}$ thus connects the cell $C_{1234}$ to $C_{1324}$. Over these two cells, the sheets of $\mathcal{S}$ are indexed, respectively, by $\LR(\alpha, \beta, \gamma, \delta)$ and by $\LR(\alpha, \gamma, \beta, \delta)$. The path-lifting of $s_{23}$ corresponds to the bijection
    \begin{align*}
    \LR(\alpha, \beta, \gamma, \delta) &\to \LR(\alpha, \gamma, \beta, \delta) \\
    (X_1, X_2, X_3, X_4) &\mapsto (X_1, \coswitch(X_2, X_3), X_4).
    \end{align*}

    \begin{figure}[h]
    \centering
    \begin{tikzpicture}
        \node at (-1.1,-.35) {\begin{ytableau}
            \none & \none & \color{blue}1 & \color{blue}1 & \color{red}\textbf{\underline{1}}\\
            \none & \color{blue}1 & \color{blue}2 & \color{red}\textbf{\underline{2}}\\
            \color{blue}1 & \color{red}\textbf{\underline{1}} & \color{red}\textbf{\underline{1}}
        \end{ytableau}};
        \draw (-2.215,-.58) -- (-2.215,.32) -- (-1.3,.32);
        \node at (-1.95,.05) {$\alpha$};
        \draw (-.9,-1.02) -- (0.02,-1.015) -- (0.015,-.1);
        \node at (-.25,-.75) {$\delta$};
        
        \node at (8.5, -1.8) {\begin{ytableau}
            \none & \none & \color{red} \textbf{\underline{1}} & \color{red} \textbf{\underline{1}} & \color{blue} 1\\
            \none & \color{red} \textbf{\underline{1}} & \color{blue} 1 & \color{blue} 1\\
            \color{red} \textbf{\underline{2}} & \color{blue} 1 & \color{blue} 2
        \end{ytableau}};
        \draw (7.385,-2.05) -- (7.385,-1.13) -- (8.28,-1.13);
        \node at (7.65,-1.4) {$\alpha$};
        \draw (8.7,-2.47) -- (9.62,-2.465) -- (9.615,-1.55);
        \node at (9.35,-2.2) {$\delta$};

        \node at (-4,-0.875) {$S(\mathbb{R})$};
        \draw [-{Latex[length=.1in]}] (-4,-1.2) -- (-4,-3.6);
        \node at (-4,-4) {$\MoNBar$};
        \draw (1.15,-.35) ellipse (1cm and .5cm);
        \draw (1.15,-1.8) ellipse (1cm and .5cm);
        \filldraw (1.15,-.35) circle (.05);
        \filldraw (6.15,-.35) circle (.05);
        \filldraw (1.15,-1.8) circle (.05);
        \filldraw (6.15,-1.8) circle (.05);
        \draw (6.15,-.35) ellipse (1cm and .5cm);
        \draw (6.15,-1.8) ellipse (1cm and .5cm);
        \draw (1.15,-.35) to [bend left=15] (3.65,-1.075);
        \draw [-{Latex}] (3.65,-1.075) to [bend right=15] (6.07,-1.8);
        \draw (1.15,-1.8) to [bend right=15] (3.65,-1.075);
        \draw [-{Latex}] (3.65,-1.075) to [bend left=15] (6.07,-.35);
        \draw (1.15,-4) ellipse (1cm and .5cm);
        \draw (6.15,-4) ellipse (1cm and .5cm);
        \filldraw (1.15,-4) circle (.05);
        \filldraw (6.15,-4) circle (.05);
        \draw [-{Latex}] (1.15,-4) -- (6.07,-4);
    \end{tikzpicture}
    \caption{An example of $\esh$ as a lift of the path $s_{2,3}$ in $\overline{M}_{0,4}$.}
    \label{fig:path_lift}
\end{figure}
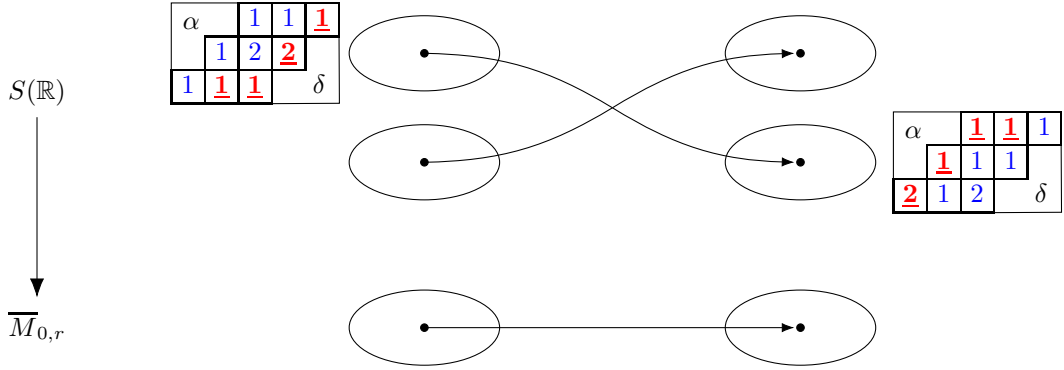
\end{example}

\begin{example}
Consider a similar example where instead of the partition $\beta$, we have a chain of single box partitions $\ybox_1,\ldots,\ybox_5$. This means we now have the covering space $\mathcal{S} = \mathcal{S}(\alpha, \ybox_1,\ldots,\ybox_5, \gamma, \delta) \to \overline{M}_{0, 8}(\mathbb{R})$

The path-lifting of $s_{27}$ corresponds to the bijection
    \begin{align*}
    \LR(\alpha, \ybox_1,\ldots,\ybox_5, \gamma, \delta) &\to \LR(\alpha, \gamma, \ybox_5,\ldots,\ybox_1, \delta) \\
    (X_1, X_2, \ldots, X_8) &\mapsto (X_1, \coswitch(X_2, \ldots,X_7), X_8).
    \end{align*}

However, compared to the previous example, we are coswitching a chain of single boxes past a single partition $\gamma$. The data of a chain of single box partitions is equal to the data of a standard Young tableau, so we may instead compute this path lift using $\alg$:
\begin{align*}
    \LR(\alpha, \ybox_1,\ldots,\ybox_5, \gamma, \delta) &\to \LR(\alpha, \gamma, \ybox_5,\ldots,\ybox_1, \delta) \\
    (X_1, X_2, X_3, X_4) &\mapsto (X_1, \alg(X_2, X_3), X_4).
    \end{align*}

    \begin{figure}[h]
    \centering
    \begin{tikzpicture}
        \node at (-1.1,-.35) {\begin{ytableau}
            \none & \none & \color{blue}3 & \color{blue}4 & \color{red}\textbf{\underline{1}}\\
            \none & \color{blue}2 & \color{blue}5 & \color{red}\textbf{\underline{2}}\\
            \color{blue}1 & \color{red}\textbf{\underline{1}} & \color{red}\textbf{\underline{1}}
        \end{ytableau}};
        \draw (-2.215,-.58) -- (-2.215,.32) -- (-1.3,.32);
        \node at (-1.95,.05) {$\alpha$};
        \draw (-.9,-1.02) -- (0.02,-1.015) -- (0.015,-.1);
        \node at (-.25,-.75) {$\delta$};
        
        \node at (8.5, -1.8) {\begin{ytableau}
            \none & \none & \color{red} \textbf{\underline{1}} & \color{red} \textbf{\underline{1}} & \color{blue} 5\\
            \none & \color{red} \textbf{\underline{1}} & \color{blue} 1 & \color{blue} 4\\
            \color{red} \textbf{\underline{2}} & \color{blue} 2 & \color{blue} 3
        \end{ytableau}};
        \draw (7.385,-2.05) -- (7.385,-1.13) -- (8.28,-1.13);
        \node at (7.65,-1.4) {$\alpha$};
        \draw (8.7,-2.47) -- (9.62,-2.465) -- (9.615,-1.55);
        \node at (9.35,-2.2) {$\delta$};

        \node at (-4,-0.875) {$S(\mathbb{R})$};
        \draw [-{Latex[length=.1in]}] (-4,-1.2) -- (-4,-3.6);
        \node at (-4,-4) {$\MoNBar$};
        \draw (1.15,-.35) ellipse (1cm and .5cm);
        \draw (1.15,-1.8) ellipse (1cm and .5cm);
        \filldraw (1.15,-.35) circle (.05);
        \filldraw (6.15,-.35) circle (.05);
        \filldraw (1.15,-1.8) circle (.05);
        \filldraw (6.15,-1.8) circle (.05);
        \draw (6.15,-.35) ellipse (1cm and .5cm);
        \draw (6.15,-1.8) ellipse (1cm and .5cm);
        \draw (1.15,-.35) to [bend left=15] (3.65,-1.075);
        \draw [-{Latex}] (3.65,-1.075) to [bend right=15] (6.07,-1.8);
        \draw (1.15,-1.8) to [bend right=15] (3.65,-1.075);
        \draw [-{Latex}] (3.65,-1.075) to [bend left=15] (6.07,-.35);
        \draw (1.15,-4) ellipse (1cm and .5cm);
        \draw (6.15,-4) ellipse (1cm and .5cm);
        \filldraw (1.15,-4) circle (.05);
        \filldraw (6.15,-4) circle (.05);
        \draw [-{Latex}] (1.15,-4) -- (6.07,-4);
    \end{tikzpicture}
    \caption{An example of $\alg$ as a lift of the path $s_{2,7}$ in $\overline{M}_{0,8}$. Note that in the output, the blue entries $1,\ldots, 5$ of the standard Young tableau, correspond to the boxes $\protect\ybox_5,\ldots,\protect\ybox_1$ (in that order).}
    \label{fig:path_lift_pesh}
\end{figure}
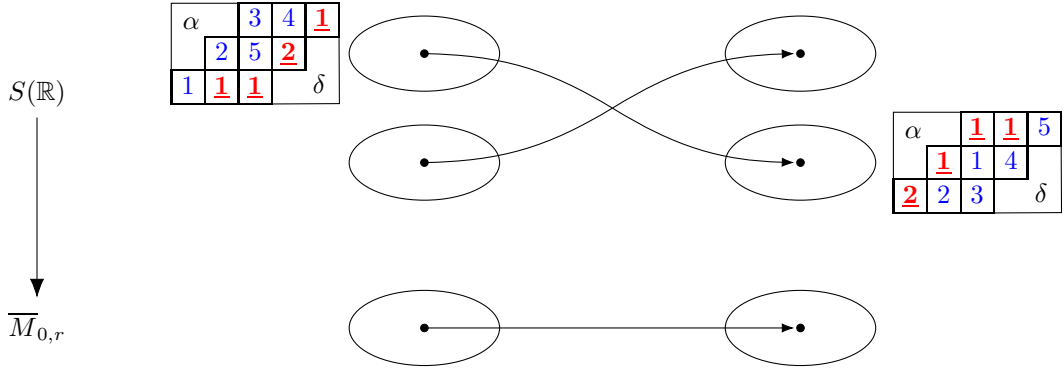
\end{example}

% Let $C$ be a connected reduced projective variety whose only singularities are simple nodes. The arithmetic genus of $C$ is measured as the size of its first cohomology group (?) $g=H^1(C,\mathcal{O}_C)$. We'll focus on genus zero curves, which are isomorphic to $\mathbb{P}^1$ and therefore smooth. We choose $n$ distinct smooth points $p_1,\ldots, p_r$ and consider $C$ up to automorphisms that fix these points. $C$ is \textbf{stable curve} if the only such automorphism is the identity. Because $Aut(\mathbb{P}^1)$ is 3-transitive, $C$ is stable if and only if each component has at least 3 points and/or nodes. This gives us the moduli space

% \begin{center}
%     $\MoNBar=\{$stable genus zero curves with $n$ distinct smooth marked points$\}/\sim$
% \end{center}
% where $(C,p_\bullet)\sim(C',p'_\bullet)$ if there's an isomorphism from $C$ to $C'$ that fixes the $p_i$.
%basically, define M_0,n bar, the covering space, I think we should briefly describe wronski map, then state without proof (should reference some papers though) that each of the operations defined above are defined on the covering space and correspond to various descriptions of paths downstairs on the stable curves.

\subsection{Coplactic operators}\label{sec:coplactic-operators}

\textbf{Crystals} in type A are a set of weight raising and weight lowering coplactic operators, $E_i$ and $F_i$ respectively, that act on both semistandard Young tableaux and their reading words. 

\begin{definition}
Let $w$ be a word. Then the crystal operators $E_i(w)$ and $F_i(w)$ can be computed as follows. Consider the $i,i+1$-subword of $w$ with each $i$ and $i+1$ replaced with a ) and ( respectively. Starting from the end of the subword, pair off each ( with the closest unpaired ) to the right if it exists. If there is an unmatched (, then $E_i(w)$ changes the first unmatched ( to a ), otherwise $E_i(w)=\emptyset$. Similarly, $F_i(w)$ will instead change the last unmatched ) to a ( if it exists, otherwise $F_i(w)=\emptyset$.
\end{definition}

\begin{example}
    Consider the word $w=2221132122131$. Replacing each entry in the $1,2$-subword with parentheses gives

    \begin{center}
        \color{red}\textbf{(}\color{black}(())()(()).
    \end{center}
    The single bold red ( is the only unmatched symbol, so $F_1(w)$ is undefined while $E_1(w)=1221132122131$.
    
\end{example}

Let $T$ be a semistandard tableau with $n$ as the largest entry, and let $D$ be its dual equivalence class. Crystal operators preserve dual equivalence, and every pair of elements in a dual equivalence class is connected via crystal operators. In particular, the highest weight representative of $D$ can be computed by applying every possible $E_i$ crystal to $T$ for all $i\geq 1$ until no such weight raising operator is defined. Similarly, the \textbf{lowest weight} representative is the result of applying as many $F_i$ crystal operators as possible for $1\leq i<n$. This gives us a tableau with weight $\{0,\ldots,0,w_i,\ldots,w_{n}\}$, so we then subtract $i-1$ from each entry to obtain a tableau with weight $\{w_i,\ldots,w_{n}\}$.

% I think these computations will be used for some of the proofs, for example we need the composition of crystals to be well defined.
\begin{proposition}
Let $T$ be a Littlewood-Richardson tableau. Then $F_i$ is defined if and only if the $(i,i+1)$ subword contains strictly more $i$'s than $(i+1)$'s.
\end{proposition}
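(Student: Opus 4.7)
My approach is to translate the Littlewood-Richardson hypothesis into a structural property of the greedy matching on the $(i, i+1)$-subword, and then finish by counting. First I would reduce to the subword: if $T$ is LR then $w(T)$ is reverse-ballot, and restricting to any two consecutive letters preserves reverse-ballotness in those letters. Thus in every suffix of the $(i, i+1)$-subword of $w(T)$, the number of $i$'s is at least the number of $(i+1)$'s, i.e., every suffix has at least as many $)$'s as $($'s.

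The key structural claim is that in the greedy right-to-left pairing, every opening parenthesis $($ gets matched with some closing parenthesis $)$. I would prove this by contradiction: suppose an $($ at position $j$ is unmatched. Because the procedure scans right-to-left and each $($ pairs with the nearest unpaired $)$ strictly to its right, the existence of any unpaired $)$ beyond position $j$ would have produced a match for position $j$. Hence every $)$ in positions strictly greater than $j$ must already be matched with a distinct $($ also in positions strictly greater than $j$, which forces the strict right suffix of $j$ to contain at most as many $)$'s as $($'s. Including position $j$ itself, the full suffix starting at $j$ then contains strictly fewer $)$'s than $($'s, contradicting reverse-ballotness.

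With this claim in hand, the counting is immediate. Since every $($ is matched to a distinct $)$, the total number of matched pairs equals the total number of $($'s in the subword, so the number of unmatched $)$'s equals the total number of $)$'s minus the total number of $($'s, which is the number of $i$'s minus the number of $(i+1)$'s. By definition $F_i$ is defined precisely when at least one $)$ is unmatched, so $F_i$ is defined if and only if the number of $i$'s strictly exceeds the number of $(i+1)$'s in the $(i,i+1)$-subword.

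The main obstacle is the structural claim that no $($ is left unmatched. This is where the LR hypothesis is genuinely used: for a general word, the greedy matching can leave both $($'s and $)$'s unmatched (for example in $)($), and only reverse-ballotness forces all unmatched symbols to be of the same type. Once this claim is established, the rest is elementary bookkeeping.
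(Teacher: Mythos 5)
Your proof is correct, but it takes a genuinely different route from the paper's. The paper argues by coplacticity: since crystal operators commute with JDT, $F_i(T)$ is defined iff $F_i(\mathrm{rect}(T))$ is, and the rectification of an LR tableau is the highest-weight tableau of its shape, whose $(i,i+1)$-reading word is simply $(i+1)^{\lambda_{i+1}} i^{\lambda_i}$; the count is then immediate. You instead work directly on the skew reading word, using only the reverse-ballot condition (every suffix has at least as many $i$'s as $(i+1)$'s) to show that the greedy right-to-left pairing matches every opening parenthesis, after which the number of unmatched $i$'s is forced to be $\#i - \#(i+1)$. Your structural claim is argued correctly: if the $($ at position $j$ were unmatched, then at the time it is processed there are no unpaired $)$'s to its right, so every $)$ beyond $j$ is already matched to a $($ beyond $j$, and the suffix starting at $j$ violates reverse-ballotness. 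What each approach buys: the paper's proof is shorter but leans on two nontrivial inputs (coplacticity of crystal operators and the identification of $\mathrm{rect}(T)$ with the highest-weight tableau); yours is self-contained and slightly more general, since it applies verbatim to any reverse-ballot word, whether or not it is the reading word of a tableau, and it makes explicit the fact (implicit in the paper) that for an LR tableau all unmatched symbols in the pairing are $i$'s, never $(i+1)$'s --- which is also exactly the statement that $E_i$ is undefined, i.e., that $T$ is highest weight.
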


\begin{proof}
    Crystal operators are coplactic, meaning $F_i(T)$ is defined if and only if $F_i(\mathrm{rect}(T))$ is defined. Observe that the $(i,i+1)$-reading word of $\mathrm{rect}(T)$ is of the form $(i+1)\ldots (i+1)i\ldots i$. Hence, there is an unmatched $i$ if if and only if there are more $i$'s than $(i+1)$'s.
\end{proof}

%These operators will be useful to us as we construct an algorithm that computes the coplactic extensions of the operations described above \textit{locally}, meaning we act directly on the skew tableaux without conjugating by rectification.

\section{Main results}\label{sec:main results}

\subsection{Local evacuation shuffling}\label{sec:algorithm-definition-a}

We now define two local algorithms to compute the bijections of Section \ref{sec:switching-ops}, meaning we act internally on the skew tableaux without rectifying or unrectifying. The first algorithm uses JDT-like local moves, while the second uses a composition of crystal operators.

For a word $w = w_1 \ldots w_n$, we say the suffix $w_{n-j} \cdots w_n$ is \textbf{tied for $(i,i+1)$} if it contains exactly as many $i$'s as $i+1$'s.

\begin{definition}\label{def:type_a_hop}
    (Type A hopping algorithm) Let $X$ be a standard Young tableau such that $|X|=n$, and let $T$ be a Littlewood-Richardson semistandard Young tableau. We refer to the entries of $X$ as $x_1, \ldots, x_n$ and the entries of $T$ by their numerical values. Let $d$ be the largest value of $T$.

    Then the hopping algorithm in Type A is defined as follows:

    Set $i=n$ and $j=1$. 

    \begin{itemize}
        \item \textbf{Phase 1a:} If $x_i$ precedes all of the $j$s of $T$ in reading order, replace $x_i$ by $j$, record $a_{n-i+1}=j$, and proceed to Phase 1b. (We say $x_i$ \emph{transitions} out of Phase $1$ and is absorbed into $T$ as a $j$.) If $x_i$ does not precede all of the $j$s in reading order, switch $x_i$ with the $j$ immediately preceding it in reading order. Increment $j$ and repeat Phase 1a.

        \item \textbf{Phase 1b:} If $i=1$, proceed to Phase 2. If $i\neq 1$, decrement $i$, reset $j=1$, and repeat Phase 1a.
        
        \item \textbf{Phase 2:} Let $j = a_{n-i+1}$. Replace the first $j$ in reading order with $x_{n-i+1}$. If its suffix is tied for $(j,j+1)$, increment $j$. If not, switch $x_{n-i+1}$ with the nearest $j$ after $x_{n-i+1}$ in reading order whose suffix is tied for $(j,j+1)$, and increment $j$. In either case, if the new value for $j=d+1$, increment $i$ and repeat Phase 2 with that $i$ value. If not, repeat Phase 2 with the original $i$ value.
    \end{itemize}
\end{definition}

We demonstrate this algorithm in the following example. 

\begin{example}\label{ex:type-a-hop}
    \[
    \begin{tikzpicture}
        \node at (0,0) {
        \begin{ytableau}
            \nonumber & x_1 & x_3 & 1 \\
            x_2 & 1 & 1\\
            1 & 2 & 2
        \end{ytableau}};
        \node at (2,.3) {Phase 1};
        \node at (2,-.3) {$a_1=3$};
        \draw[-{latex}] (1.25,0) -- ++(1.5,0);
        \node at (4,0) {\begin{ytableau}
            \nonumber & x_1 & 1 & 1 \\
            x_2 & 1 & 2\\
            1 & 2 & 3
        \end{ytableau}};
        \node at (5.75,.3) {Phase 1};
        \node at (5.75,-.3) {$a_2=2$};
        \draw[-{latex}] (5,0) -- ++(1.5,0);
        \node at (7.5,0) {\begin{ytableau}
            \nonumber & x_1 & 1 & 1\\
            1 & 1 & 2\\
            2 & 2 & 3
        \end{ytableau}};
        \node at (9.25,.3) {Phase 1};
        \node at (9.25,-.3) {$a_3=3$};
        \draw[-{latex}] (8.5,0) -- ++(1.5,0);
        \node at (11,0) {\begin{ytableau}
            \nonumber & 1 & 1 & 1\\
            1 & 2 & 2\\
            2 & 3 & 3
        \end{ytableau}};
    \end{tikzpicture}
\]
\[
\begin{tikzpicture}
        \draw[-{latex}] (4.5,0) -- ++(1.5,0);
        \node at (7,0) {\begin{ytableau}
            \nonumber & 1 & 1 & 1\\
            1 & 2 & 2\\
            2 & 3 & x_3
        \end{ytableau}};
        \node at (5.25,.3) {Phase 2};
        \draw[-{latex}] (8,0) -- ++(1.5,0);
        \node at (10.5,0) {\begin{ytableau}
            \nonumber & 1 & 1 & 1\\
            1 & 2 & x_2\\
            2 & 3 & x_3
        \end{ytableau}};
        \node at (8.75,.3) {Phase 2};
        \draw[-{latex}] (11.5,0) -- ++(1.5,0);
        \node at (14,0) {\begin{ytableau}
            \nonumber & 1 & 1 & 1\\
            1 & 2 & x_2\\
            2 & x_1 & x_3
        \end{ytableau}};
        \node at (12.25,.3) {Phase 2};
    \end{tikzpicture}
\]
\end{example}

\begin{remark}
    We may extend these algorithms to the case where $X$ is any semistandard tableau by acting on its entries in standardization order. Additionally, the hopping algorithm may be computed using only the combined reading word of $X\sqcup T$.
\end{remark}

\begin{lemma}
    The hopping algorithm is coplactic.
\end{lemma}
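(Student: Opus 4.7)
The plan is to reduce coplacticity to a statement about the combined reading word of $X \sqcup T$ and verify it by case analysis on elementary Knuth transformations. By the remark preceding the lemma, each step of the hopping algorithm can be described purely in terms of reading-order data: Phase 1a finds the $j$-entry of $T$ immediately preceding $x_i$ in reading order and either swaps their positions or (if no such $j$ exists) relabels $x_i$ as a $j$; Phase 2 similarly locates a $j$ via the ``suffix tied for $(j, j+1)$'' condition and swaps it with $x_{n-i+1}$. Since two skew tableaux are slide-equivalent precisely when their reading words are Knuth equivalent, it suffices to show that this reading-word operation respects Knuth equivalence, and for this it suffices in turn to check commutation with each elementary Knuth transformation $acb \leftrightarrow cab$ (for $a \le b < c$) and $bac \leftrightarrow bca$ (for $a < b \le c$).

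The verification is a case analysis indexed by how the three letters of a Knuth transformation overlap with the letters modified in the algorithmic step. Most cases are immediate: whenever the Knuth triple involves neither of the algorithmic letters, Knuth moves preserve the relative reading order of any disjoint pair, so the algorithmic move and the Knuth move commute trivially. The remaining cases, in which the triple interacts with the pair being swapped or absorbed, are finite in number and can be dispatched by direct verification.

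The main obstacle I anticipate is Phase 2, whose swap is controlled by a bracket-matching condition identical to that defining the crystal operators $E_j$ and $F_j$. Here I would invoke the classical fact (used in proving coplacticity of the crystal operators) that Knuth transformations preserve the bracket-matching on each $(j, j+1)$-subword, and in particular preserve the identity of the first unmatched $j$. With this in hand, Phase 2's commutation with Knuth moves follows the same template as the coplacticity proof for $E_j, F_j$, and Phase 1a's simpler ``precedes all $j$'s'' condition can then be checked directly, since it is stable under any Knuth transformation not involving a $j$ and admits only a short list of interactions with transformations that do.
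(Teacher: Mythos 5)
Your plan is much more ambitious than the paper's proof, which is a two-line citation: the hopping algorithm consists of standardization-preserving relabelings and Pieri switches, both already shown to be coplactic in Gillespie--Levinson \cite{1box_a}, with the Phase~2 transition condition handled by \cite[Prop.~4.13]{1box_a}. You are in effect proposing to reprove those cited results from scratch, and the reduction you set up has a genuine gap. You argue that since slide equivalence corresponds to Knuth equivalence of reading words, it suffices to show the operation ``respects Knuth equivalence.'' But preserving Knuth classes is strictly weaker than coplacticity: a map can send Knuth-equivalent words to Knuth-equivalent words while failing to commute with individual jeu de taquin slides, because coplacticity also constrains how the map interacts with the dual equivalence (shape/recording) data, not just the plactic class. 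What one actually needs is the stronger, letter-tracking statement that for each elementary Knuth transformation the operator modifies \emph{corresponding} letters on both sides --- and even then one must still connect elementary Knuth moves back to JDT: a single vertical micro-step of a slide moves one entry past many others in reading order, so it is not itself an elementary Knuth transformation, and the decomposition of a slide into elementary moves does not come with a canonical letter-tracking for free. None of this is addressed.

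Separately, the case analysis you defer to ``direct verification'' is precisely where all the content lives, and your assessment of it is too optimistic. The Phase~1a condition ``$x_i$ precedes all of the $j$'s in reading order'' is \emph{not} stable under Knuth transformations involving a $j$ and $x_i$: the move $acb \leftrightarrow cab$ with $a$ a $j$-entry and $c = x_i$ reverses their relative order, so the naive invariance fails and coplacticity only holds because the switch itself compensates --- establishing this is the substance of \cite[Thm.~3.9, Prop.~4.13]{1box_a}. Your appeal to bracket-matching invariance for Phase~2 is the right idea for the crystal-like steps, but as written the proposal neither closes the reduction nor carries out the cases that matter; as a repair, either cite \cite{1box_a} as the paper does, or reformulate the reduction in terms of commutation with each JDT slide directly (tracking cells rather than reading-word positions).
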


\begin{proof}
    The hopping algorithm is comprised entirely of standardization-preserving relabeling steps and Pieri switches, which were shown to be coplactic in \cite{1box_a}. Note that the condition of when to transition to Phase 2 (i.e. that no Pieri switch can be performed) is itself a coplactic condition \cite[Proposition 4.13]{1box_a}.
\end{proof}

We note that Phase 2 implicitly requires the existence of a suitable $j$, after $x_{n-i+1}$ in reading order, with which to switch $x_{n-i+1}$. To show that the algorithm is well-defined, it therefore suffices to show that the intermediate tableau $T$, excluding the `moving' entry, remains Littlewood-Richardson throughout the algorithm. (In particular, the suffix from the last $j$ in reading order is tied for $(j, j+1)$ as it contains no $j$'s nor $j+1$'s.) We prove this in the lemma below. A \textbf{step} refers to a single switch performed during the algorithm.

\begin{lemma}{\label{alwaysballot}}
    Let $X$ be a $\SSYT$ and $T$ be a $\mathrm{LR}$ tableau extending $X$. Let $S=X'\sqcup T'$ be a tableau that appears after some step of the computation of $\hop(X,T)$, where $X'$ consists of the entries of $X$ that have not reached Phase 1b, or that have completed Phase 2; and $T'$ consists of all other entries. Then:

\begin{enumerate}
    \item $T'$ is Littlewood-Richardson.
    \item $T'$ is semistandard. %{\color{red} Reader found parts 1,2 confusing..."that has no reached Phase 1b," "absorbed into," etc.}
    \item Let $x_i$ be the entry that most recently moved in the algorithm and let $j$ be the index of the move. Then $x_i$ is an inner corner of the sub-tableau $T'_{>j}$ containing only those entries greater than $j$.
    %After any entry $x_j$ of $X$ performs the $i$-th step of Phase 1 or Phase 2, it is an inner corner of the sub-tableau $T'_{>i}$ containing only those entries greater than $i$.
\end{enumerate}
\end{lemma}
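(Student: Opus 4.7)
My plan is to prove the three invariants simultaneously by induction on the number of elementary steps executed by the algorithm. The base case is the initial configuration $X'=X$, $T'=T$: $T'$ is LR and semistandard by hypothesis, and condition (3) is vacuous since no entry has yet moved. The inductive step then case-splits on the operation just performed, namely: a Phase 1a Pieri switch, a Phase 1a transition (absorption), a Phase 2 replacement of the first $j$, or a Phase 2 forward switch.

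The Phase 1a Pieri switches are the cleanest case. When $x_i$ swaps with a $j$ in $T'$, none of the other unmoved $x_{i'}$'s sitting passively in $X'$ actually participates: the swap only exchanges the contents of two cells, one in $X'$ and one in $T'$. Hence the operation acts on $T'$ in exactly the same way as a single-box Pieri switch of $x_i$ inside $T'$, with previously absorbed $x_{i''}$'s playing the role of ordinary LR entries. The three invariants then follow from the corresponding single-box lemmas of \cite{1box_a}. A Phase 1a transition is also straightforward: absorption is triggered precisely when $x_i$ precedes every $j$ of $T'$ in reading order, so inserting a $j$ at $x_i$'s position prepends a new $j$ to the $(j,j+1)$-subword, which only increases prefix counts of $j$'s and therefore preserves ballotness. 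Semistandardness at the newly filled cell, and condition (3) for the next move, both follow from the inner-corner hypothesis on $x_i$ at the moment of absorption.

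Phase 2 is where I expect the main obstacle to lie, because here $T'$ \emph{loses} a $j$ rather than gaining one, and so ballotness is no longer preserved for free. The key device is the tied-suffix condition: $x_{n-i+1}$ is only switched forward into a $j$ whose suffix is tied for $(j,j+1)$. I would argue that this rule guarantees that the removed $j$ corresponds to a matched parenthesis under the standard $j/(j+1)$ bracket pairing, so every prefix of the resulting reading word retains at least as many $j$'s as $(j+1)$'s. Semistandardness is direct, since $x_{n-i+1}$ only slides forward through cells of the same value $j$. For condition (3), after each switch $x_{n-i+1}$ occupies a former $j$-cell, and the tied-suffix choice combined with the LR structure of the remaining $T'$ forces this cell to be an inner corner of the sub-tableau of entries greater than $j$.

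The most delicate bookkeeping, and the part I expect to be hardest, is maintaining condition (3) throughout, because the inner-corner property is sensitive to the evolving shape of $T'_{>j}$ and is exactly what guarantees that the next step of the algorithm is well-defined (i.e.\ that a target cell for the next switch actually exists). My plan is to establish (1) and (2) first in each case, since those reduce to reading-word and local comparison arguments, and then extract (3) from the resulting combinatorial structure of $T'$ together with the explicit rule selecting the next move. This ordering avoids any circular dependence between condition (3) and the definition of the algorithm's next step.
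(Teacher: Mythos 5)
Your overall architecture (induction on elementary steps, reducing the Phase~1 and Phase~2 hops to the single-box results of \cite{1box_a}) matches the paper's proof, which likewise defers the hops to \cite[Theorem 3.9]{1box_a} and isolates the absorption step as the only genuinely new case. However, your treatment of that absorption step has a real gap. Recall that ``Littlewood--Richardson'' here means the reading word is \emph{reverse-ballot}, so inserting a new $j$ at a position preceding all existing $j$'s must be checked against \emph{two} adjacent pairs: for $(j, j+1)$ the count of $j$'s in every suffix only goes up, so that inequality is indeed preserved for free, as you say; but for $(j-1, j)$ every suffix containing the new entry now has one \emph{more} $j$, so the inequality $\#(j-1) \geq \#j$ could a priori fail. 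Your argument only addresses the first comparison, and the second is exactly the nontrivial content of this case --- before absorption one only knows $\#(j-1) \geq \#j$ on the relevant suffix, whereas afterwards one needs the strict inequality $\#(j-1) \geq \#j + 1$. (The ``prefix counts'' phrasing also suggests you are working with the ballot rather than reverse-ballot convention, which may be where the second comparison got lost.)

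The paper closes this gap not by a reading-word argument but by coplacticity: since the hops and relabelings commute with JDT, one may assume the ambient tableau is rectified, in which case \cite[Lemma 4.15]{1box_a} shows the transitioning entry sits at the end of row $j$ of the highest-weight (straight-shape) tableau at the moment it transitions out at index $j$; absorbing it as a $j$ there simply produces the highest-weight tableau of a larger straight shape, which is manifestly LR for \emph{all} adjacent pairs. If you want to keep your direct local argument, you would need to supply the missing $(j-1,j)$ verification --- e.g.\ by tracking that at the transition, $x_i$ has just switched past the $(j-1)$ immediately preceding it, and showing this forces strictly more $(j-1)$'s than $j$'s after $x_i$'s position --- but the coplacticity reduction is considerably cleaner. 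Your Phase~2 sketch is in the right spirit, though note that the $j$ with tied $(j,j+1)$-suffix is the last \emph{unmatched} $j$ in the bracket pairing, not a matched one; in any case the paper simply cites \cite[Theorem 3.9]{1box_a} for all of Phase~2.
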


\begin{proof}
    As the entries of $X$ perform Phase 1 or Phase 2 hops, all three statements essentially follow from \cite[Theorem 3.9]{1box_a}. However, it remains to be shown that the Transition Phase, which we alter to absorb an entry of $X$ into $T$, produces a tableau $S$ satisfying the first two claims. We already know $S$ is semistandard by the third statement, meaning we must only prove $S$ is Littlewood-Richardson:

    It suffices to show that when some $x_j$ transitions out of Phase 1 and is absorbed into $T$ as an $i$ entry, the resulting tableau omitting $x_1,\ldots ,x_{i-1}$ is Littlewood-Richardson. Because ballotness is preserved by JDT and the algorithm is coplactic, we may also simplify to the rectified case.
    
    Let $n\geq j$. Suppose $X$ is a single box $x_n$. Notice that because $X$ is rectified, the marked entry $x_n$ ends on row $j$ when it transitions out of Phase 1 at index $j$ \cite[Lemma 4.15]{1box_a}. Using \cite[Theorem 3.9]{1box_a}, we know that $T$ omitting $x_i$ is Littlewood-Richardson and semistandard. So, absorbing $x_i$ as a $j$ entry on row $i$ will preserve ballotness for $T$.

    We repeat this argument for $x_{n-1},\ldots x_i$, giving us a reverse-ballot tableau omitting $x_1,\ldots x_{i-1}$ and completing the proof.
\end{proof}

The third statement of Lemma \ref{alwaysballot} also shows that the output of $\hop(X,T)$ is, indeed, of the form $(T',X')$.

It turns out to be most natural to prove that the composition of the hopping algorithm with evacuation computes coswitching. Simple variations then compute $\alg$ and coplactic evacuation entirely in terms of switches (see Corollary \ref{cor:pesh-recovery}). To do so, we use the following to compare the outputs of $\hop(X,T)$ and $\coswitch(X,T)$:

\begin{definition}
    The \textbf{row data} of $\coswitch(X,T)$ is $(r(x_1),\ldots,r(x_n))$ where $r(x_i)$ is the row in which $x_i$ falls after $\coswitch(X,T)$. The \textbf{transition data} of $\hop(X,T)$ is $h(X,T)=(h(y_n),\ldots,h(y_1))$ where $h(y_i)$ is the number with which $y_i$ is replaced during Phase 1a of $\hop(X,T)$.
\end{definition}

\begin{lemma}{\label{permutations}}
    For a straight shape $X\sqcup T$, row data, $r(X,T)$, and transition data, $h(X,T)$, are permuations of each other.
\end{lemma}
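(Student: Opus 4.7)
My plan is to show that for each value $j$, the number of entries in $r(X,T)$ equal to $j$ matches the number of entries in $h(X,T)$ equal to $j$, by showing both equal $\mu_j - \nu_j$, where $\mu$ is the straight shape of $X \sqcup T$ and $\nu$ is the rectification shape of $T$. Since both sequences have length $n = |X|$, matching multiplicities is equivalent to the permutation claim.

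For the row data, the key observation is that since $X \sqcup T$ is of straight shape, $\coswitch(X,T)$ coincides with ordinary tableau switching $\shuffle(X,T)$: both $X$ and $X \sqcup T$ are already rectified, so the rectification and unrectification steps in the definition of $\coswitch$ are trivial. After switching, the new inner tableau sits inside the straight shape $\mu$ and is JDT-equivalent to $T$, so it must be $\mathrm{rect}(T)$, i.e.\ the highest weight tableau of shape $\nu$. Consequently $X'$ occupies the skew shape $\mu/\nu$, and the number of $x_i$ with $r(x_i) = j$ equals the length of the $j$-th row of $X'$, namely $\mu_j - \nu_j$.

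For the transition data, I would examine the tableau at the exact moment Phase 1 completes and Phase 2 begins. At this point every $x_i$ has been absorbed into $T$ as the value $h(x_i)$, so in the notation of Lemma \ref{alwaysballot} we have $X' = \emptyset$ and $T'$ equal to the entire tableau. Since neither switches nor relabelings alter the underlying shape, the tableau still has straight shape $\mu$; and by Lemma \ref{alwaysballot}, $T'$ is Littlewood--Richardson. The unique $\mathrm{LR}$ filling of a straight shape is the highest weight tableau, which has exactly $\mu_j$ entries equal to $j$. Since $T$ initially contributed $\nu_j$ copies of $j$, exactly $\mu_j - \nu_j$ of the $x_i$'s must have been absorbed as $j$'s, so the number of $i$ with $h(x_i) = j$ is also $\mu_j - \nu_j$.

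The only real subtlety is confirming that Lemma \ref{alwaysballot} genuinely applies at the Phase 1/Phase 2 boundary, where every entry of $X$ has completed Phase 1a and Phase 1b but none has begun Phase 2; parsing the definitions of $X'$ and $T'$ in that lemma shows this case is covered. I expect this minor bookkeeping step to be the only nontrivial check in an otherwise short counting argument.
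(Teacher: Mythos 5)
Your proof is correct, and it takes a genuinely different route from the paper's. The paper compares the two outputs directly: it uses Lemma \ref{alwaysballot} to show that the outer tableaux $T'$ (from $\coswitch$) and $T''$ (from $\hop$) coincide, deduces $\shape(X')=\shape(X'')$, and then identifies $h(x_i)$ with the row on which $x_i$ lands after Phase 2, asserting that in the rectified case Phase 2 merely slides $x_i$ to the end of row $h(x_i)$; both data are then row-counts of two tableaux of the same shape. You instead compute both multiplicity vectors explicitly as $\mu-\nu$: the row data because $\coswitch=\shuffle$ on straight shapes and the switched inner tableau, being a straight-shape tableau slide-equivalent to $T$, must be $\mathrm{rect}(T)$ of shape $\nu$; and the transition data by counting $j$'s in the tableau reached at the end of Phase 1, which is Littlewood--Richardson of straight shape $\mu$ and hence the highest-weight filling with $\mu_j$ copies of $j$, of which $\nu_j$ were already present in $T$. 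What your argument buys is independence from Phase 2 entirely --- you never need the claim about where Phase 2 deposits $x_i$, which the paper asserts without further justification inside this lemma --- at the cost of invoking twice the fact that the unique straight-shape LR filling is the highest-weight one. Your bookkeeping worry about applying Lemma \ref{alwaysballot} at the Phase 1/Phase 2 boundary resolves as you expect: at that moment every entry of $X$ has completed Phase 1b and none has completed Phase 2, so $X'=\emptyset$ and $T'$ is the whole tableau, which is exactly the reading the lemma's wording supports.
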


\begin{proof}
    Let the output of $\coswitch(X,T)$ be $(T',X')$, and let the output of $\hop(X,T)$ be $(T'',X'')$.
    
    By Lemma \ref{alwaysballot}, $\coswitch(X,T)$ and $\hop(X,T)$ preserve both the ballotness and weight of $T$. Therefore, since $T'$ and $T''$ are both Littlewood-Richardson of highest weight, $T'=T''$. Because $\coswitch(X,T)$ and $\hop(X,T)$ preserve the overall tableau shape, we also know that $\shape(T',X')=\shape(T'',X'')$. Therefore, since $T'=T''$, we get that $\shape(X')=\shape(X'')$. Since we are working in the straight shape case, $h(x_i)$ is simply the row on which $x_i$ ends up since Phase 2 of $\hop(X,T)$ will just slide $x_i$ to the end of the $h(x_i)$'th row.
    
    Finally, we observe that $\weight(r(X,T))$ and $\weight(h(X,T))$ record the number of entries of $X'$ and $X''$ respectively on each row. Since $\shape(X')=\shape(X'')$, this means that that $\weight(r(X,T))=\weight(h(X,T))$. Thus, $r(X,T)$ and $h(X,T)$ must be permutations of each other.
\end{proof}

\begin{theorem} \label{thm: hop=esh}
    Let $Y$ be the tableau resulting from the evacuation $X$ with entries labeled $y_n,\ldots,y_1$ in order of evacuation. 
    The output of $\coswitch(X,T)$ and the output of $\hop(Y,T)$ agree.
\end{theorem}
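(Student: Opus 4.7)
The plan is to reduce to the case where $X \sqcup T$ is of straight shape via coplacticity, and then proceed by induction on $n = |X|$, using the single-box result of \cite{1box_a} as the base case. Both $\coswitch$ and $\hop$ are coplactic (the former by Definition \ref{def:coswitch}, the latter by the preceding lemma), and the Sch\"utzenberger involution is itself coplactic by Definition \ref{def:coplactic_evacuation}. Applying a full rectification to $(X, T)$ and to $(Y, T)$ therefore commutes with all operations involved, so the desired identity reduces to one for straight-shape $X$ and $T$, in which case $\coswitch(X, T) = \shuffle(X, T)$ is ordinary tableau switching and $Y = e(X)$ is the ordinary Sch\"utzenberger evacuation of a standard straight-shape tableau.

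For the inductive step, I would factor each operation by its highest-labeled entry. On the $\coswitch$ side, $\shuffle$ begins with an outward JDT slide of the value-$n$ box $x_n$ through $T$; on the $\hop$ side, the first outer Phase 1 iteration absorbs $y_n$ into $T$, and its Phase 2 counterpart eventually retrieves $y_n$ to an outer position. Evacuation places $y_n$ at the outer corner obtained when $x_1$ slides out during the first step of evacuating $X$, and this is in natural correspondence—under the duality between evacuation order and switching order—with where $x_n$ lands after switching. The single-box case of \cite{1box_a} pins down the $\hop$-placement of $y_n$ in the output, and combined with Lemma \ref{permutations} (forcing the row-multisets of outputs to agree) and Lemma \ref{alwaysballot} (keeping intermediate tableaux Littlewood-Richardson throughout), this provides the local data to match the first switching step of $\shuffle$. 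The induction hypothesis then applies to the residual problem obtained after the value-$n$ entries are processed.

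The main obstacle will be the temporal interleaving of Phases 1 and 2: $\shuffle$ processes each entry to completion before moving on, whereas $\hop$ performs all of Phase 1 first and only then all of Phase 2. Reconciling these timelines requires showing that the fully-absorbed Phase 1 tableau encodes enough positional data (via the transition values $a_i$) for Phase 2 to correctly reproduce the switching output, and that the evacuation relation $Y = e(X)$ is precisely what makes the two timelines equivalent. A potentially cleaner alternative is to exploit the identity $\coswitch(X, T) = \alg(Y, T)$, which holds because $\alg(X, T) = \coswitch(e(X), T)$ and $e$ is an involution. This reformulates the theorem as $\hop(Y, T) = \alg(Y, T)$, so one could instead verify directly that $\hop$ has the same effect on $Y$ as evacuation and the same effect on $T$ as coswitching, which together characterize $\alg$ and may be more tractable than the direct comparison.
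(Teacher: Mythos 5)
Your main plan matches the paper's proof essentially step for step: reduce to straight shape by coplacticity, induct on $|X|$, peel off the top entry using the duality $e(Y)=X$ (so that removing $y_1$ and rectifying leaves $Y'$ with $e(Y')=X'$), and deduce $r(x_n)=h(y_1)$ by applying Lemma \ref{permutations} to both the full and the residual data --- and the ``interleaving of Phases 1 and 2'' worry you flag is resolved in the paper exactly by this multiset comparison of row data against transition data, so no further reconciliation of timelines is required. One caution: the ``cleaner alternative'' via $\coswitch(X,T)=\alg(Y,T)$ would be circular as the paper is organized, since Corollary \ref{cor:pesh-recovery} (that the local algorithms compute $\alg$) is itself deduced from this theorem.
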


\begin{proof}
    We will work in the case where $X$ is straight shape. We proceed via induction on $|X|$. 
    
    Our base case is $|X|=1$, so $X=x_1$. Then we have row data $(r(x_1))$ and transition data $(h(y_1))$, so by Lemma \ref{permutations}, the values $\coswitch(x_1,T)$ and $\hop(x_1,T)$ must agree.

    Assume the two agree for $|X|=n-1$ and consider the case where $|X|=n$. We perform one step of $\coswitch(X,T)$ which will move $x_n$ past $T$, leaving us with a chain of tableaux $(X',T',x_n)$ where $|X'|=n-1.$

    If we instead were to evacuate $X$, leaving us with the chain $(Y,T)$, $y_1$ must be the entry in the innermost corner. We remove $y_1$ and rectify into the empty space, which leaves us with $(Y',T'')$, where $|Y'|=n-1$. Note that $T'$ is the same as $T''$ since the content of $T'$ and $T''$ have not changed and the shape of $X'$ matches the shape of $Y'$. 
    
    Since evacuation is an involution and $e(X)=Y$, we know that $e(Y)=X$. Thus, this step of removing $y_1$ from $Y$ and rectifying to get $Y'$, which matches with the first step of computing $e(Y)$, must evacuate the same outer corner as the outer corner as the location of $x_n$ in $X$. Evacuating $Y'$ from this point is just the continuation of evacuating $Y$, so $e(Y')$ must match all of $X$ except for $x_n$, but this is simply $X'$. Thus, $e(Y')=X'$, and since evacuation is an involution, $e(X')=Y'$.

    By our inductive hypothesis then, we know that $\coswitch(X',T')$ agrees with $\hop(X',T'')$, and we have row data $(r(x_1),\ldots,r(x_{n-1}))$ and transition data $(h(y_n),\ldots,h(y_2))$ which, by Lemma \ref{permutations}, are permutations of each other. But we began by coswitching $x_n$ past $T$, which gives us row data $(r(x_1),\ldots,r(x_n))$. Because our row data and transition data must be permutations of each other, we conclude that $r(x_n)=h(y_1)$. Since the final step of the hopping algorithm replaces $y_1$ by $x_n$, coswitching and the hopping algorithm agree for $|X|=n.$

    Notice that all the steps of our algorithm involve Pieri switches and replacement of $x_i$ entries by numbers. We know the Pieri switches to be coplactic from Gillespie-Levinson \cite{1box_a}, and the replacements preserve both the semistandard and ballot requirements of $T$ at any point in our algorithm. This means that these replacements are also coplactic, so each step of our algorithm commutes with JDT slides. Thus, the output of $\coswitch(X,T)$ and the output of $\hop(X,T)$ still agree when $X\sqcup T$ is skew shape.
\end{proof}

We demonstrate this result in the two examples by first computing $\hop(Y,T)$ on the same pair of tableaux as in Example 3.2 and then computing $\coswitch(X,T)$ to observe that their outputs match.

\begin{example}\label{ex:type-a-hop-coswitch}
    \[
    \begin{tikzpicture}
        \node at (0,0) {
        \begin{ytableau}
            \nonumber & x_1 & x_3 & 1 \\
            x_2 & 1 & 1\\
            1 & 2 & 2
        \end{ytableau}};
        \node at (2,.3) {$\mathrm{evacuate}\text{ } X$};
        \draw[-{latex}] (1.125,0) -- ++(1.75,0);
        \node at (4,0) {\begin{ytableau}
            \nonumber & y_1 & y_2 & 1\\
            y_3 & 1 & 1\\
            1 & 2 & 2
        \end{ytableau}};
        \node at (5.75,.3) {Phase 1};
        \node at (5.75,-.3) {$a_1=2$};
        \draw[-{latex}] (5,0) -- ++(1.5,0);
        \node at (7.5,0) {\begin{ytableau}
            \nonumber & y_1 & y_2 & 1 \\
            1 & 1 & 1\\
            2 & 2 & 2
        \end{ytableau}};
        \node at (9.25,.3) {Phase 1};
        \node at (9.25,-.3) {$a_2=3$};
        \draw[-{latex}] (8.5,0) -- ++(1.5,0);
        \node at (11,0) {\begin{ytableau}
            \nonumber & y_1 & 1 & 1\\
            1 & 1 & 2\\
            2 & 2 & 3
        \end{ytableau}};
        \node at (12.75,.3) {Phase 1};
        \node at (12.75,-.3) {$a_3=3$};
        \draw[-{latex}] (12,0) -- ++(1.5,0);
        \node at (14.5,0) {\begin{ytableau}
            \nonumber & 1 & 1 & 1\\
            1 & 2 & 2\\
            2 & 3 & 3
        \end{ytableau}};
    \end{tikzpicture}
\]
\[
\begin{tikzpicture}
        \draw[-{latex}] (4.5,0) -- ++(1.5,0);
        \node at (7,0) {\begin{ytableau}
            \nonumber & 1 & 1 & 1\\
            1 & 2 & 2\\
            2 & 3 & x_3
        \end{ytableau}};
        \node at (5.25,.3) {Phase 2};
        \draw[-{latex}] (8,0) -- ++(1.5,0);
        \node at (10.5,0) {\begin{ytableau}
            \nonumber & 1 & 1 & 1\\
            1 & 2 & 2\\
            2 & x_2 & x_3
        \end{ytableau}};
        \node at (8.75,.3) {Phase 2};
        \draw[-{latex}] (11.5,0) -- ++(1.5,0);
        \node at (14,0) {\begin{ytableau}
            \nonumber & 1 & 1 & 1\\
            1 & 2 & x_1\\
            2 & x_2 & x_3
        \end{ytableau}};
        \node at (12.25,.3) {Phase 2};
    \end{tikzpicture}
\]
\end{example}

\begin{example}
    \[
    \begin{tikzpicture}
        \node at (0,0) {
        \begin{ytableau}
            \nonumber & x_1 & x_3 & 1 \\
            x_2 & 1 & 1\\
            1 & 2 & 2
        \end{ytableau}};
        \node at (2.25,.3) {rectify $X\sqcup T$};
        \draw[-{latex}] (1.125,0) -- ++(2.5,0);
        \node at (4.625,0) {\begin{ytableau}
            x_1 & x_3 & 1 & 1\\
            x_2 & 1 & 2\\
            1 & 2 & \nonumber
        \end{ytableau}};
        \node at (6.5,.3) {$\shuffle$};
        \draw[-{latex}] (5.625,0) -- ++(1.625,0);
        \node at (8.25,0) {\begin{ytableau}
            x_1 & 1 & 1 & 1\\
            x_2 & 2 & 2\\
            1 & x_3 & \nonumber
        \end{ytableau}};
        \node at (10,.3) {$\shuffle$};
        \draw[-{latex}] (9.25,0) -- ++(1.5,0);
        \node at (11.75,0) {\begin{ytableau}
            x_1 & 1 & 1 & 1\\
            1 & 2 & 2\\
            x_2 & x_3 & \nonumber
        \end{ytableau}};
    \end{tikzpicture}
\]
\[
    \begin{tikzpicture}
        \node at (.75,.3) {$\shuffle$};
        \draw[-{latex}] (0,0) -- ++(1.5,0);
        \node at (2.5,0) {\begin{ytableau}
            1 & 1 & 1 & 1\\
            2 & 2 & x_1\\
            x_2 & x_3 & \nonumber
        \end{ytableau}};
        \node at (4.5,.3) {unrectify};
        \draw[-{latex}] (3.625,0) -- ++(1.75,0);
        \node at (6.5,0) {\begin{ytableau}
            \nonumber & 1 & 1 & 1 \\
            1 & 2 & x_1 \\
            2 & x_2 & x_3
        \end{ytableau}};
    \end{tikzpicture}
\]
\end{example}

We now describe an alternate local method for computing partial evacuation shuffling using the crystal operators described in Section \ref{sec:coplactic-operators}. Note that because we use crystal operators and relabeling steps which preserve the standardization of the tableau, the crystal algorithm is clearly coplactic. This version has the advantage that Phase 2 is easier to state than in the hopping algorithm.

\begin{definition}\label{def:type-A-crystal-alg}
    (Type A crystal algorithm) Let $X$ be a standard Young tableau such that $|X|=n$, and let $T$ be a Littlewood-Richardson semistandard Young tableau.

    Then the crystal algorithm in Type A is defined as follows:

    Replace the entries of $X$ in reading order by $-(n-1),\ldots,0$. Set $j=0$ and $i=n$.

    \begin{itemize}
        \item \textbf{Phase 1a:} If a $j+1$ precedes the singular $j$ in reading order, apply $E_{j}$ a number of times equal to one less than the number of $(j+1)$'s. Increment $j$ by one and repeat Phase 1a. Otherwise, record $a_{n-i+1}=j+1$, increment every entry less than $j+1$ by one, and proceed to Phase 1b.
        \item \textbf{Phase 1b:} If $i=1$, proceed to Phase 2. Otherwise, decrement $i$ by one, reset $j$ to be zero, and return to Phase 1a.
        \item \textbf{Phase 2:} If $i=n$, we are done. Otherwise, apply $F_{a_{n-i+1}},\ldots,F_{n}$. Replace the new $n+1$ entry by $x_{n-i+1}$. Increment $i$ by one and repeat Phase 2.
    \end{itemize}
\end{definition} 

We demonstrate this algorithm in the following example. 

\begin{example}
    \[
    \begin{tikzpicture}
        \node at (0,0) {
        \begin{ytableau}
            \nonumber & x_1 & x_3 & 1 \\
            x_2 & 1 & 1\\
            1 & 2 & 2
        \end{ytableau}};
        \node at (1.875,.3) {relabel};
        \draw[-{latex}] (1,0) -- ++(1.625,0);
        \node at (3.625,0) {\begin{ytableau}
            \nonumber & $-2$ & $0$ & 1 \\
            $-1$ & 1 & 1\\
            1 & 2 & 2
        \end{ytableau}};
        \node at (5.35,.3) {Phase 1};
        \node at (5.375,-.3) {$a_1=3$};
        \draw[-{latex}] (4.625,0) -- ++(1.5,0);
        \node at (7.125,0) {\begin{ytableau}
            \nonumber & $-2$ & 0 & 0\\
            $-1$ & 0 & 1\\
            0 & 2 & 2
        \end{ytableau}};
        \node at (8.875,.3) {relabel};
        \draw[-{latex}] (8.125,0) -- ++(1.5,0);
        \node at (10.625,0) {\begin{ytableau}
            \nonumber & $-1$ & 1 & 1\\
            0 & 1 & 2\\
            1 & 2 & 3
        \end{ytableau}};
    \end{tikzpicture}
\]
\[
\begin{tikzpicture}
        \node at (-.25,.3) {Phase 1};
        \node at (-.25,-.3) {$a_2=2$};
        \draw[-{latex}] (-1,0) -- ++(1.5,0);
        \node at (1.5,0) {
        \begin{ytableau}
            \nonumber & $-1$ & 0 & 0 \\
            0 & 0 & 2\\
            1 & 2 & 3
        \end{ytableau}};
        \node at (3.25,.3) {relabel};
        \draw[-{latex}] (2.5,0) -- ++(1.5,0);
        \node at (5,0) {\begin{ytableau}
            \nonumber & 0 & 1 & 1\\
            1 & 1 & 2\\
            2 & 2 & 3
        \end{ytableau}};
        \node at (6.825,.3) {Phase 1};
        \node at (6.75,-.3) {$a_3=3$};
        \draw[-{latex}] (6,0) -- ++(1.625,0);
        \node at (8.625,0) {\begin{ytableau}
            \nonumber & 0 & 0 & 0 \\
            0 & 1 & 1\\
            1 & 2 & 3
        \end{ytableau}};
        \node at (10.375,.3) {relabel};
        \draw[-{latex}] (9.625,0) -- ++(1.5,0);
        \node at (12.125,0) {\begin{ytableau}
            \nonumber & 1 & 1 & 1\\
            1 & 2 & 2\\
            2 & 3 & 3
        \end{ytableau}};
    \end{tikzpicture}
\]
\[
\begin{tikzpicture}
        \node at (-.25,.3) {Phase 2};
        \draw[-{latex}] (-1,0) -- ++(1.5,0);
        \node at (1.5,0) {
        \begin{ytableau}
            \nonumber & 1 & 1 & 1 \\
            1 & 2 & 2\\
            2 & 3 & x_3
        \end{ytableau}};
        \node at (3.25,.3) {Phase 2};
        \draw[-{latex}] (2.5,0) -- ++(1.5,0);
        \node at (5,0) {\begin{ytableau}
            \nonumber & 1 & 1 & 1\\
            1 & 2 & x_2\\
            2 & 3 & x_3
        \end{ytableau}};
        \node at (6.825,.3) {Phase 2};
        \draw[-{latex}] (6,0) -- ++(1.625,0);
        \node at (8.625,0) {\begin{ytableau}
            \nonumber & 1 & 1 & 1 \\
            1 & 2 & x_2\\
            2 & x_1 & x_3
        \end{ytableau}};
    \end{tikzpicture}
\]
\end{example}

\begin{theorem} \label{thm: hop=crystal}
    The output of $\crystal(X,T)$ and the output of $\hop(X,T)$ agree.
\end{theorem}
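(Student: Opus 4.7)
The plan is to prove $\hop(X,T) = \crystal(X,T)$ by induction on $n = |X|$, after first reducing to the straight-shape case via coplacticity. Both algorithms are coplactic: hop as shown above, and crystal because it is built from the coplactic operators $E_j, F_j$ together with standardization-preserving relabelings. Hence we may assume $X \sqcup T$ is rectified; the base case $n=1$ follows by direct inspection, matching the single-box analysis in \cite{1box_a}.

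The core inductive step is to show that after both algorithms complete Phase 1 for the first processed entry $x_n$, the intermediate tableau---with $x_n$ absorbed and $x_1, \ldots, x_{n-1}$ still in their original positions---is literally the same, so that the inductive hypothesis applied to $(T', (x_1, \ldots, x_{n-1}))$ finishes Phase 1. This reduces to (i) the recorded transition values $a_1$ agree, and (ii) the absorbed tableaux $T'$ coincide. For (i), I will show that both algorithms characterize $a_1$ intrinsically as the smallest positive integer $j$ such that absorbing $x_n$ as a value-$j$ entry preserves the Littlewood--Richardson condition on $T$: in hop, this is the ``$x_n$ precedes all $j$'s'' criterion of Phase 1a; in crystal, after $E_0, \ldots, E_{j-1}$ have been applied to pair off the singular $x_n$-entry with the available smaller values, the criterion ``no $(j+1)$ precedes the singular $j$'' becomes equivalent. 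For (ii), the shape and weight of $T'$ are determined by $a_1$ and the input, and I will verify, using Lemma \ref{alwaysballot} together with the constrained form of the absorbed tableau, that these conditions pin down $T'$ uniquely.

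For Phase 2, given the common Phase-1 output and the same transition data $(a_1, \ldots, a_n)$, hop places $x_{n-i+1}$ at the first $a_{n-i+1}$-entry in reading order and switches it forward until its suffix is tied, while crystal applies $F_{a_{n-i+1}}, \ldots, F_n$ to push an $(n{+}1)$ into the tableau and replaces it with $x_{n-i+1}$; both procedures invert the corresponding absorption step, depositing $x_{n-i+1}$ at the same position by coplacticity and the correspondence between Pieri switches and $F_j$ operators from \cite{1box_a}. The main obstacle is sub-claim (ii): Pieri switches (which move $x_n$ while leaving $T$'s values fixed) and $E_j$ operators (which fix $x_n$'s position while changing $T$'s values) act very differently at the intermediate level, yet must produce literally the same absorbed tableau. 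Establishing this will require a careful analysis of how the final incrementing step in crystal realigns the modified $T$-values to match the positions where hop's Pieri switches relocate $x_n$, leveraging the LR condition and Lemma \ref{alwaysballot} to pin down the unique tableau of the correct shape and weight.
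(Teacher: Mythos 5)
Your overall strategy (induction on $|X|$ plus a uniqueness argument for the intermediate tableau) is genuinely different from the paper's proof, which is a direct move-by-move correspondence, and the difference matters: the step you yourself flag as ``the main obstacle,'' sub-claim (ii), is left open, and the mechanism you propose for closing it does not work. You want to pin down the absorbed tableau as ``the unique tableau of the correct shape and weight,'' but after absorbing $x_n$ the relevant tableau has \emph{skew} shape (the straight shape of $X\sqcup T$ minus the boxes still occupied by $x_1,\ldots,x_{n-1}$), and a skew Littlewood--Richardson tableau is not determined by its shape and weight: the number of such tableaux is a Littlewood--Richardson coefficient, which can exceed $1$ (e.g.\ $c^{(3,2,1)}_{(2,1),(2,1)}=2$). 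A uniqueness argument would have to go through the dual equivalence class instead, and showing that the two algorithms produce intermediate tableaux in the same class is essentially the whole content of the theorem. Sub-claim (i) is also shaky as stated: ``the smallest $j$ such that absorbing $x_n$ as a value-$j$ entry preserves the LR condition'' is not well defined until you specify where $x_n$ sits when it is absorbed, and at its original position no value of $j$ need even give a semistandard filling (in Example \ref{ex:type-a-hop} there is a $1$ directly below $x_3$).

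The paper avoids all of this by matching the elementary moves directly, with no induction. In Phase 1, one round of the crystal algorithm applies $E_j$ exactly $(\#(j{+}1)\text{'s})-1$ times; since the $(j,j+1)$-subword contains a single $j$ (the tracked entry), the unique bracket-paired $j+1$ is precisely the $j+1$ immediately preceding the tracked entry in reading order, so every other $j+1$ is lowered and that one survives as the new singular entry. After the final global increment, the net effect on every box --- each surviving entry in the chain gaining $1$, all other entries returning to their original values --- coincides box-by-box with the chain of Pieri switches performed by $\hop$. Phase 2 is handled the same way: $F_{a}$ raises the last unmatched $a$, which is the first $a$ in reading order whose suffix is tied for $(a,a+1)$, i.e.\ exactly the entry that $\hop$ switches the marked box into. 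If you want to salvage your induction, you would still need this local correspondence to establish that the intermediate tableaux literally coincide, at which point the induction becomes superfluous.
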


\begin{proof}
    In $\crystal(X,T)$, at each application of $E_j$, we isolate the $j+1$ entry that precedes the unique $j$ and lower all other $(j+1)$'s to $j$'s. This makes the $j+1$ that preceded $j$ our new unique number. This has the same effects as directly switching $y_i$ with the $j$ preceding it in reading order in $\hop(X,T)$. The only difference is that in $\crystal(X,T)$, our entries have been lowered, but we correct this by incrementing at the end of Phase 1a, so Phase 1a from both $\crystal(X,T)$ and $\hop(X,T)$ agree. Phase 1b is the same for both algorithms, so we need only consider whether their Phase 2's agree. 

    In Phase 2 of $\crystal(X,T)$, we start by applying $F_{a_i}$. This raises the first $j$ entry whose $(j,j+1)$ suffix is tied, as that will be the last unmatched $j$. In $\hop(X,T)$, we replaced the first $j$ in reading order by $x_{n-i+1}$. Swapping $x_{n-i+1}$ with this same $j$ entry with a tied suffix puts a $j$ back where $x_{n-i+1}$ was and isolates the same entry that $\crystal(X,T)$ raised to be a $j+1$. We repeat this process in both $\crystal(X,T)$ and $\hop(X,T)$ until the same final entry is $x_{n-i+1}$, so Phase 2 agrees for both algorithms.
\end{proof}

\begin{remark}
    We may compute Phase 1 and Phase 2 using either of the descriptions above. In general, a hopping Phase 1 and crystal Phase 2 is the most computationally efficient and practical algorithm.
\end{remark}

\begin{definition}
    (Type A mixed algorithm) Let $X$ be a standard Young tableau such that $|X|=n$, and let $T$ be a Littlewood-Richardson semistandard Young tableau. 
    
    Then the mixed algorithm in Type A is defined as follows: 

    Set $i=n$ and $j=1$.

    \begin{itemize}
        \item \textbf{Phase 1a:} If $x_i$ precedes all of the $j$s in reading order, replace $x_i$ by $j$, record $a_{n-i+1}=j$, and proceed to Phase 1b. If $x_i$ does not precede all of the $j$s in reading order, switch $x_i$ with the $j$ immediately preceding it in reading order. Increment $j$ and repeat Phase 1a.

        \item \textbf{Phase 1b:} If $i=1$, proceed to Phase 2. If $i\neq 1$, decrement $i$, reset $j=1$, and repeat Phase 1a.

        \item \textbf{Phase 2:} Apply $F_{a_{n-i+1}},\ldots,F_{n}$. Replace the new $n+1$ entry by $x_{n-i+1}$. Increment $i$ by one and repeat Phase 2.
    \end{itemize}
\end{definition}

\section{Related Results}\label{sec:related-results}

\subsection{Variations of the algorithms}

In this section, we state several variations of our local algorithms. We begin with a statement connecting our local algorithms to $\alg{}$ defined in Section \ref{sec:switching-ops}.

\begin{corollary}\label{cor:pesh-recovery}
    In either algorithm, the output we get is $\alg(X,T)$.
\end{corollary}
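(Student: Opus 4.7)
The plan is to derive this as a direct consequence of Theorem \ref{thm: hop=esh} together with the definition of $\alg$ and the fact that evacuation is an involution. Specifically, Theorem \ref{thm: hop=esh} states that for any standard $X$ and LR tableau $T$, the output of $\coswitch(X,T)$ agrees with the output of $\hop(Y,T)$, where $Y = e(X)$. The key observation is that this identity holds for every valid $X$, so we may apply it after first replacing $X$ by $e(X)$.

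Concretely, I would substitute $e(X)$ for $X$ in the statement of Theorem \ref{thm: hop=esh}. The corresponding $Y$ is then $e(e(X))$, which equals $X$ because $e$ is an involution (Definition \ref{def:coplactic_evacuation}). This yields
\[
\coswitch(e(X), T) = \hop(X, T).
\]
By the definition of partial evacuation shuffling, $\alg(X,T) = \coswitch(e(X), T)$, so we obtain $\hop(X,T) = \alg(X,T)$, which handles the hopping algorithm. For the crystal algorithm, I would then cite Theorem \ref{thm: hop=crystal}, which establishes $\crystal(X,T) = \hop(X,T)$, and conclude $\crystal(X,T) = \alg(X,T)$ as well.

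The only subtlety to check is that the inputs to these statements are valid at each substitution: since evacuation preserves straight-shape standard tableaux (and, more generally, is coplactic), $e(X)$ is again a standard tableau of the same shape as $X$, so Theorem \ref{thm: hop=esh} applies with $e(X)$ in place of $X$. No further combinatorial work is needed, which is why this is stated as a corollary rather than a theorem; the main obstacle is only notational bookkeeping to confirm that the extension from straight-shape $X$ to skew $X$ follows from coplacticity in the same way it does in Theorem \ref{thm: hop=esh}.
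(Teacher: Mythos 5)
Your proposal is correct and follows essentially the same route as the paper's proof: substitute $e(X)$ into the identity $\coswitch(X,T)=\hop(e(X),T)$ from Theorem \ref{thm: hop=esh}, use that evacuation is an involution to obtain $\hop(X,T)=\coswitch(e(X),T)=\alg(X,T)$, and then invoke Theorem \ref{thm: hop=crystal} for the crystal version. No gaps.
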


\begin{proof}
    Let $\simpalg(X,T)$ be either local algorithm applied to $(X,T)$. We proved earlier that
$$\coswitch(X,T)=\simpalg(\coplacticEvac(X),T).$$
    We then use the fact that coplactic evacuation is an involution to rewrite the statement above:
$$\coswitch(\coplacticEvac(X),T)=\simpalg(\coplacticEvac(\coplacticEvac(X)),T)=\simpalg(X,T).$$
    Notice that this is exactly the definition of $\alg$, meaning the output of $\simpalg$ must agree with that of $\alg$
$$\simpalg(X,T)=\coswitch(\coplacticEvac(X),T)=\alg(X,T)$$ and the proof is complete.
\end{proof}

When computing $\coswitch(X,T)$, both the hopping and crystal algorithms include a single non-local step, which is the coplactic evacuation of the inner tableau $X$. However, we may use the fact that $\simpalg{}$ computes $\alg{}$ to construct a completely local version of each coplactic bijection described in Section \ref{sec:switching-ops} by applying the following Lemma:

\begin{lemma}
    Both $\coswitch{}$ing and $\mathrm{evacuation}$ can be written in terms of $\alg$ as follows:
    \begin{itemize}
        \item $\coplacticEvac(X,T)$ is the result of splitting $\alg(X\sqcup T,\emptyset)$.
        \item $\coswitch(X,T)=\alg(\alg(X,\emptyset),T)$
    \end{itemize}
\end{lemma}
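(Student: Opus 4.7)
The plan is to reduce both identities to one simple observation: when the outer tableau is empty, coswitching does nothing, and so $\alg(X, \emptyset) = \coswitch(\coplacticEvac(X), \emptyset) = \coplacticEvac(X)$. This follows because the definition $\coswitch = \mathrm{rect}^{-1} \circ \shuffle \circ \mathrm{rect}$ applied to $(Y, \emptyset)$ simply rectifies $Y$, performs a trivial switch with the empty tableau, and then unrectifies back to $Y$; so $\coswitch(Y, \emptyset) = Y$ for any $Y$. Everything else is algebraic manipulation using this identity together with the fact (stated in Def. \ref{def:coplactic_evacuation}/Section \ref{sec:switching-ops}) that $\coplacticEvac$ is an involution.

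For the first bullet, I would compute
\[
\alg(X \sqcup T, \emptyset) = \coswitch\bigl(\coplacticEvac(X \sqcup T),\, \emptyset\bigr) = \coplacticEvac(X \sqcup T),
\]
using the observation above with $Y = \coplacticEvac(X \sqcup T)$. By the definition of $\coplacticEvac(X, T)$ as the two pieces of $\coplacticEvac(X \sqcup T)$ (together with the fact, recalled in Section \ref{sec:switching-ops}, that this tableau naturally splits as $T' \sqcup X'$), splitting $\alg(X \sqcup T, \emptyset)$ yields exactly $\coplacticEvac(X, T)$.

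For the second bullet, I would apply the key identity twice:
\[
\alg\bigl(\alg(X, \emptyset),\, T\bigr) = \alg\bigl(\coplacticEvac(X),\, T\bigr) = \coswitch\bigl(\coplacticEvac(\coplacticEvac(X)),\, T\bigr) = \coswitch(X, T),
\]
where the first equality uses the key observation, the second is the definition of $\alg$, and the third uses that $\coplacticEvac$ is an involution.

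There is essentially no obstacle here; the only subtlety worth mentioning explicitly is the verification that $\coswitch(Y, \emptyset) = Y$ so that $\alg(\cdot, \emptyset)$ reduces cleanly to $\coplacticEvac$. Once that is isolated, both identities fall out as a one-line substitution. I would therefore write the proof as a short paragraph establishing the identity $\alg(Y, \emptyset) = \coplacticEvac(Y)$, followed by two displayed computations corresponding to the two bullet points.
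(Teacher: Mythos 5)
Your proposal is correct and follows the same route as the paper: the paper's proof likewise rests on the observation that $\coswitch$ is the identity when one argument is empty (giving $\alg(Y,\emptyset)=\coplacticEvac(Y)$) and on $\coplacticEvac$ being an involution. Your write-up is simply a more explicit version of the paper's two-sentence argument.
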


\begin{proof}
    The first statement follows from the fact that $\coswitch{}$ has no effect when one of of the arguments is empty. The second statement simplifies to $\coswitch(X,T)=\alg(\coplacticEvac(X),T)$, which follows from the fact that $\coplacticEvac{}$ is an involution.
\end{proof}

Hence, we have local algorithms which compute each of the coplactic switching operations defined in Section \ref{sec:switching-ops}.

We also observe that crystal operators, and consequently the switches of the hopping algorithm, commute under certain conditions. This gives us the following statement which staggers the steps of Phase 1:

\begin{remark}\label{remark_stagger}
    Because they only depend on the $j$-subword in $T$, we may stagger the steps of Phase 1. That is, we may perform a single step of Phase 1 on $x_n$ through some index $j_n$, then perform another step on $x_{n-1}$ through the index $j_{n-1}\leq i_n$, and so on for all entries $x_i$ that are currently in Phase 1.
\end{remark}

While this means the order we apply the Phase 1 hops or crystal operators is very easy to modify, Phase 2 is more challenging since the steps depend on the $(j,j+1)$-subword and the entries of $X$ do not necessarily begin at the same index. Hence, we do not have a similar statement which alters the order in which we perform the steps of Phase 2.

Another feature of our algorithm is that the inner tableau, $X$, gets absorbed into $T$ during Phase 1. While we seem to be losing data on $X$, which the transition data records in some capacity, we may alter the steps of the hopping algorithm to keep track of the entries of $X$ as we perform compute the local algorithms.

\begin{remark}[Label tracking]
    
    To track the entries of $X$ after Phase 1, apply the following modification to the statement of Definition \ref{def:type_a_hop}: whenever we say to replace some $x_i$ entry with a $j$ entry, instead replace it with the symbol $j_{n+i-1}$, i.e., a subscripted $j$. Thereafter, there is always a unique $i$ entry with subscript $n+i-1$ denoting the position of $x_{n+i-1}$ in the tableau. Then, at the corresponding step of Phase 2 we turn the entry with subscript $(n+i-1)$ into $x_{n+i-1}$.

    The main detail to check here is that $j_{n+i-1}$ is actually the smallest in reading order when beginning the corresponding step of Phase 2. For this we observe that when $y_i$ completes Phase 1 and is replaced with a $j_{n+i-1}$, it must be the smallest $j$ in standardization order since there are no $j$ entries to the left.
    
    Notice that the steps of both Phase 1 and Phase 2 switch $x_k$ with an the closest $j$ entry in reading order, meaning the value of each marked $j$ entry in standardization order is not affected by the hops. Hence, the only possible entries smaller than $j_{n+i-1}$ are of the form $j_{n+k-1}$, where $k>j$. However, we slide out all entries with larger subscripts first, meaning $j_{n+i-1}$ must be the smallest $j$ in standardization order.
\end{remark}

We demonstrate label tracking below using the same tableaux as Example \ref{ex:type-a-hop}.

\begin{example}
    \[
    \begin{tikzpicture}
        \node at (0,0) {
        \begin{ytableau}
            \nonumber & x_1 & x_3 & 1 \\
            x_2 & 1 & 1\\
            1 & 2 & 2
        \end{ytableau}};
        \node at (2,.3) {Phase 1};
        \node at (2,-.3) {$a_1=3$};
        \draw[-{latex}] (1.25,0) -- ++(1.5,0);
        \node at (4,0) {\begin{ytableau}
            \nonumber & x_1 & 1 & 1 \\
            x_2 & 1 & 2\\
            1 & 2 & 3_1
        \end{ytableau}};
        \node at (5.75,.3) {Phase 1};
        \node at (5.75,-.3) {$a_2=2$};
        \draw[-{latex}] (5,0) -- ++(1.5,0);
        \node at (7.5,0) {\begin{ytableau}
            \nonumber & x_1 & 1 & 1\\
            1 & 1 & 2\\
            2_2 & 2 & 3_1
        \end{ytableau}};
        \node at (9.25,.3) {Phase 1};
        \node at (9.25,-.3) {$a_3=3$};
        \draw[-{latex}] (8.5,0) -- ++(1.5,0);
        \node at (11,0) {\begin{ytableau}
            \nonumber & 1 & 1 & 1\\
            1 & 2 & 2\\
            2_2 & 3_3 & 3_1
        \end{ytableau}};
    \end{tikzpicture}
\]
\[
\begin{tikzpicture}
        \draw[-{latex}] (4.5,0) -- ++(1.5,0);
        \node at (7,0) {\begin{ytableau}
            \nonumber & 1 & 1 & 1\\
            1 & 2 & 2\\
            2_2 & 3_1 & x_3
        \end{ytableau}};
        \node at (5.25,.3) {Phase 2};
        \draw[-{latex}] (8,0) -- ++(1.5,0);
        \node at (10.5,0) {\begin{ytableau}
            \nonumber & 1 & 1 & 1\\
            1 & 2 & x_2\\
            2 & 3_1 & x_3
        \end{ytableau}};
        \node at (8.75,.3) {Phase 2};
        \draw[-{latex}] (11.5,0) -- ++(1.5,0);
        \node at (14,0) {\begin{ytableau}
            \nonumber & 1 & 1 & 1\\
            1 & 2 & x_2\\
            2 & x_1 & x_3
        \end{ytableau}};
        \node at (12.25,.3) {Phase 2};
    \end{tikzpicture}
\]
\end{example}

Using Corollary \ref{cor:pesh-recovery}, we observe that the crystal and hopping algorithms intrinsically compute \algorithm; we simply compute coswitching by introducing an evacuation step before or after applying the operation. We now give one final variant of local coswitching that is computed entirely in terms of hops, without explicitly evacuating either tableau. This variant is given by changing the order of the boxes of $X$ after Phase 1, before proceeding to Phase 2. To do so, we instead perform an auxiliary ``evacuation''-like calculation on the transition data (considered as a word of integers).

To describe this variant we require the following lemma, which allows us to recover the slide class of the inner tableau from the transition data:

\begin{lemma}\label{lemma:transition-data-reading-word}
    Let $X$ be a semistandard Young tableau and $T$ be a Littlewood-Richardson tableau extending $X$. Let the pair $(T',X')$ be the result of rectifying and switching $(X,T)$. Then the reading word of $X'$ may be computed using the transition data as follows:

    \begin{itemize}
        \item Compute the transition data $a=(a_1,\ldots,a_n)$ associated with either $\hop(X,T)$ or $\crystal(X,T)$, recording it in a two-line array $\begin{bmatrix}
            a_1 & \ldots & a_n\\
            1 & \ldots & n
        \end{bmatrix}$.
        \item Sort the vertical pairs of the array largest to smallest by the entries of the top row, where ties are broken by entries in the bottom row such that if $i<j$, the column containing $i$ precedes the column containing $j$.
    \end{itemize}

    The reading word of $X'$ is precisely the word written in the second row of the array.
\end{lemma}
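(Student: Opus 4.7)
My plan is to reduce to the straight-shape case and then match the row-and-column data encoded by the sorted two-line array with the positions of $X'$. The key observation is that both sides of the claimed equality are coplactic: the pair $(T', X')$ obtained by rectifying and switching $(X, T)$ depends only on the slide class of $X \sqcup T$, and the transition data of $\hop(X, T)$ (or of $\crystal(X, T)$) is invariant under JDT slides on $X \sqcup T$ since the hopping and crystal algorithms are both coplactic, as noted in Section \ref{sec:algorithm-definition-a}. It therefore suffices to prove the statement when $X \sqcup T$ is of straight shape, so that $X$ occupies the top-left subregion and $T$ fills the complementary skew region.

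In the straight-shape case, the proof of Lemma \ref{permutations} already identifies $a_k$ with the row in which $x_{n-k+1}$ settles after Phase 2, since in that setting Phase 2 simply deposits $x_{n-k+1}$ at the end of row $a_k$. On the rectify-then-switch side, tableau switching processes the entries of $X$ in decreasing standardization order, which matches the Phase 1 processing order of the hopping algorithm. I would show by induction on $k$ that the row in which $x_{n-k+1}$ lands under switching equals $a_k$, using the fact that a single switching slide is driven by the same reading-order comparison with the $j$-entries of $T$ that determines the Phase 1 hops: the slide trajectory and the hop trajectory halt when they have crossed exactly the same set of $j$-entries of $T$ for $j = 1, 2, \ldots, a_k$.

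Having matched the rows, the sorting recipe recovers the reading word of $X'$ as follows: sorting the top row $a_k$ from largest to smallest lists the rows of $X'$ in the order they appear in the reading word (deepest first), while the tie-breaking rule encodes the left-to-right column order within each shared row. The main obstacle I anticipate is verifying this within-row claim, namely that among entries of $X$ that end up in a common row, their left-to-right order in $X'$ is controlled by the processing index according to the stated rule. This requires bookkeeping of how several switching slides that share a row interact, and I would establish it by induction using the LR property of the intermediate tableaux from Lemma \ref{alwaysballot} together with the ``nearest suitable $j$-entry'' prescription of Phase 2, which together force later-processed entries to settle into cells lying to one side of those already placed by earlier-processed ones.
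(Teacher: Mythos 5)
Your reduction to the straight-shape case and your closing sorting argument (largest row first in the reading word, ties within a row resolved by semistandardness and standardization order) match the paper's proof. The gap is in the middle step. You propose to show that the row in which $x_{n-k+1}$ lands under rectify-then-switch equals the transition datum $a_k$, by matching each switching slide with the corresponding Phase 1 hop trajectory. This identification is false: the transition data of $\hop(X,T)$ records the rows of the entries in the output of the \emph{hopping} algorithm, which computes $\alg(X,T)=\coswitch(\coplacticEvac(X),T)$, not $\coswitch(X,T)$. Theorem \ref{thm: hop=esh} matches the row data of switching $(X,T)$ with the transition data of $\hop(\coplacticEvac(X),T)$; without the evacuation, the two vectors are only guaranteed to be permutations of each other (Lemma \ref{permutations}), which is not enough to run your induction. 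Concretely, in Example \ref{ex:type-a-hop} the transition data of $\hop(X,T)$ is $(3,2,3)$, while under rectify-and-switch the entries $x_1,x_2,x_3$ land in rows $2,3,3$; your induction already fails for the second entry processed, since $x_2$ transitions out of Phase 1 at index $2$ but lands in row $3$ under switching. The trajectories diverge because after the first switching slide the shape of $T$ has changed, whereas after the first Phase 1 pass $x_n$ has been absorbed into $T$ as a new numerical entry; the discrepancy is precisely an evacuation of $X$, which is why Proposition \ref{prop:array algorithm} must insert an explicit evacuation of the sorted word in order to recover the switching output.

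The paper's proof avoids any comparison with switching: in the rectified case, Phase 2 of the hopping algorithm simply deposits each $x_i$ at the end of the row named by the corresponding transition datum, so the transition data is read off directly as the row data of the hopping output (citing Theorem \ref{thm: hop=esh} and Lemma \ref{permutations}), and the sorting step then reconstructs the reading word exactly as in your last paragraph. Your anticipated ``main obstacle'' about interacting slides within a shared row does not arise on that route: within a row of a semistandard tableau the entries appear in increasing standardization order, which is all the tie-breaking rule encodes. If you insist on the literal switching output $X'$, you must first replace $X$ by $\coplacticEvac(X)$ before computing the transition data.
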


%\JL{In particular, we see that the transition data recovers the slide equivalence class of $X$ (since $X'$ is slide equivalent to $X$). It can also be shown that the recording tableaux of $X'$ and $X$ under RSK \JL{are the same? or differ by evacuation}, so the transition data also recovers the dual equivalence class. Since we will not use this fact, we omit the proof. (by unrectifying $X'$??) }

\begin{proof}
    We begin by reducing to the case where $X$ is rectified, by coplacticity of the transition data and the construction of $X'$. Recall that the transition data is equivalent to the row data associated with $\hop(X,T)$ and $\crystal(X,T)$ (Theorem \ref{thm: hop=esh}). In particular, when $X$ is rectified, this means $a_i$ is the row containing $x_i$ in $X'$. We encode this information in our array as vertical pairs $\begin{bmatrix}
        a_i\\
        i
    \end{bmatrix}$.
    
    The reading word of $X'$ is obtained by reading off the entries of $X'$ from left to right, starting from the bottom row. So, the first entries in reading order are those with the largest associated value in the transition data, followed by the entries with the next largest value, and so on.

    Once we have determined which entries are contained on each row, we use the fact that $x_i$ is the $i$-th smallest entry of $X'$ in standardization order to complete the word: suppose $i_1<\ldots<i_k$ and $x_{i_1},\ldots,x_{i_k}$ are all contained in the same row, i.e. have the same corresponding values in the transition data. Because $X'$ is a semistandard Young tableau, $x_{i_1}$ must come before $x_{i_2}$, which is before $x_{i_3}$, and so on. Observe that this construction of the reading word of $X'$ is exactly encoded in the array sorting rules described above. Hence, the second row of the sorted array must be the reading word of $X'$.
\end{proof}

%Using Corollary \ref{cor:pesh-recovery}, we observe that $\simpalg$ intrinsically computes $\alg$; we simply compute coswitching in the hopping and crystal algorithms by introducing an evacuation step before or after applying $\simpalg{}$. However, it is also possible to compute coswitching by performing an auxiliary ``evacuation'' calculation on the transition data itself, in the middle of the algorithm, which has the practical effect of changing the order of the boxes of $X$ after Phase 1, before proceeding to Phase 2. This gives a final variant of local $\coswitch$, described entirely as a sequence of hops, which does not involve explicitly evacuating either tableau.

We note that $\alg(X, T)$ and $\coswitch(X, T)$ give the same $T'$ and differ only in that $\alg(X, T)$ yields a different tableau $X''$. By Lemma \ref{lemma:transition-data-reading-word}, we can compute the reading word of $X''$ in the middle of the algorithm via the transition data. Thus, we want to modify the transition data so that the remainder of the algorithm computes $X'$ rather than $X''$.

% We want to use this fact to modify the transition data and compute coswitching without modifying the intermediate tableau $S$. %, and it turns out that we may operate on the transition data so long as the combined dual class of $X\sqcup T$ is preserved.

\begin{remark}
    Let $S$ be the tableau obtained immediately after Phase 1 and before beginning Phase 2 of $\hop(X,T)$. Because crystal operators preserve the dual equivalence class of a tableau and $S$ is a Littlewood-Richardson tableau with shape $\shape(X\sqcup T)$, we know that $S$ is the highest weight representative for the dual equivalence class of $X\sqcup T$. In particular, $S$ does not depend on the slide class of $X$. Furthermore, we see that $\weight(S)$ is the vector sum of the transition data and $\weight(T)$.
\end{remark}

%We observe that altering the slide class of $X$ does not change the dual class of $X\sqcup T$. Hence, we may alter the slide class of $X$ after Phase 1 while retaining the tableau $S$ from the first half of the algorithm. In particular, the next variant evacuates the reading word described in Lemma \ref{lemma:transition-data-reading-word} to compute coswitching.

\begin{proposition}\label{prop:array algorithm} (Array algorithm)
    Let $X$ be a semistandard Young tableau and $T$ be a Littlewood-Richardson tableau extending $X$. Then $\coswitch(X,T)$ may be computed by the following:

    \begin{itemize}
        \item Perform Phase 1 of either the hopping or crystal statement of $\simpalg(X,T)$, recording the transition data $a=(a_1,a_2,\ldots, a_n)$ in a two-line array
        $
        \begin{bmatrix}
            a_1 & \ldots & a_n\\
            1 & \ldots& n
        \end{bmatrix}.$
        \item Sort the array as described in Lemma \ref{lemma:transition-data-reading-word}, then evacuate the second row, treating its entries as a word. Then sort the vertical pairs of the resulting array smallest to largest by the entries of the bottom row, giving us the array $\begin{bmatrix}
            b_1 & \ldots & b_n\\
            1 & \ldots& n
        \end{bmatrix}.$
        \item Perform Phase 2 of either local algorithm using the new transition data $b=(b_1,\ldots,b_n)$ in place of $a$.

% \JL{Is it possible to formulate the RSK maneuver directly in terms of this two-line array? Ideally the description would look like:

% \begin{itemize}
%     \item[1.] Record the transition data in a two-line array (in an obvious way).
%     \item[2.] Let (P,Q) = RSK(the array) and let (P', Q') = some evacuation thing based on (P, Q). Let (new array) = RSK(-1)(P', Q').
%     \item[3.] {\bf Phase 2}: Let x be the box corresponding to b1 (?? fix the wording/statement here). Perform Phase 2 hops as in Algorithm x.xx on x. Continue with the x corresponding to b2, and so on.
% \end{itemize}}
        
%         \item Sort the columns of the array from largest to smallest entry on the top row, breaking ties using the bottom row from smallest to largest. Let $a'$ and $w$ be the resulting top and bottom rows of the sorted array, and let $(P, Q) = \RSK(a', w)$ be the corresponding insertion and recording tableaux.
        
%         Using the RSK correspondance, evacuate the insertion tableau corresponding to $w$.
%         \item Compute $v=v_1\ldots v_n$ by perfoming inverse RSK on the new pair of tableaux.
%         \item Sort the columns of the following two line array from smallest to largest entry on the bottom row:
%         \[
%         \begin{bmatrix}
%             a'_1&\ldots& a'_n\\
%             v_1&\ldots& v'_n
%         \end{bmatrix}
%         \]
%         \item Let $b=b_1\ldots b_n$ be the top row of the resulting array. Perform Phase 2 of either local algorithm in the new order determined by $b$.
    \end{itemize}
\end{proposition}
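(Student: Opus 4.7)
The plan is to reduce the claim to an equality of transition data. By Corollary \ref{cor:pesh-recovery}, $\coswitch(X, T) = \simpalg(\coplacticEvac(X), T)$, and by the Remark preceding this proposition, Phase 1 of $\simpalg$ depends only on the dual equivalence class of $X \sqcup T$ and not on the slide class of $X$. Consequently, Phase 1 applied to either $(X, T)$ or $(\coplacticEvac(X), T)$ produces the same intermediate tableau $S$, so the two algorithms differ only in Phase 2, which is entirely governed by $S$ together with the transition data. Thus it suffices to show that the Array algorithm's output $b$ agrees with the transition data $b^{\star}$ that would arise from Phase 1 of $\simpalg(\coplacticEvac(X), T)$.

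Next I would translate the problem into a statement about reading words via Lemma \ref{lemma:transition-data-reading-word}. Let $X''$ denote the X-part of $\alg(X, T)$ and $X'$ the X-part of $\coswitch(X, T)$; both are standard skew tableaux of a common skew shape (inherited from the positions originally occupied by $T$ after the Phase 1 migration). Sorting $a$ descending, with ties broken ascending on the bottom row, yields $w(X'')$, and sorting $b^{\star}$ in the same way yields $w(X')$. Since $\coswitch$ preserves the slide class of its inner tableau while $\alg$ evacuates it, $X'$ and $X''$ lie in conjugate slide classes on the same skew shape and are therefore exchanged by the coplactic extension of evacuation restricted to this shape: $X' = \coplacticEvac(X'')$. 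It follows that if the Array algorithm's word-evacuation step carries $w(X'')$ to $w(X')$, then after resorting ascending by bottom, the resulting $b$ has sorted-descending bottom row equal to $w(X')$. The tie-breaking convention of Lemma \ref{lemma:transition-data-reading-word} makes the correspondence between a sorted bottom row and $b$ invertible, so $b = b^{\star}$, and Phase 2 with transition data $b$ applied to $S$ produces exactly $\coswitch(X, T)$.

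The main obstacle is verifying the word-level identity that the evacuation of $w(X'')$, applied in the sense of Definition \ref{def:coplactic_evacuation} and the Remark following it, produces $w(X')$. The subtlety is that word evacuation via RSK preserves only the Knuth class of a word, whereas the reading word of a skew tableau is a specific representative in that class determined by the skew shape. Overcoming this requires showing that, when the word in question is the reading word of a standard skew tableau of the particular shape shared by $X'$ and $X''$, the word evacuation selects precisely the reading-word representative corresponding to the skew coplactic evacuation. One promising strategy is to induct on $|X|$, using coplacticity (as in the proof of Theorem \ref{thm: hop=esh}) to peel off a single entry of $X$ and reduce to a smaller instance where the identity can be verified directly; another is to track the rectify--evacuate--unrectify pipeline of Definition \ref{def:coplactic_evacuation} alongside the Phase 1 output $S$, showing that the sequence of slides recorded by the transition data encodes precisely the unrectification needed to realize $X'$ as a skew tableau of the target shape from the evacuated reading word.
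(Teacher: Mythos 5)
Your proposal follows the paper's proof essentially step for step: reduce the claim to showing that $b$ equals the transition data of $\simpalg(\coplacticEvac(X),T)$, note that Phase 1 depends only on dual equivalence classes, pass to reading words via Lemma \ref{lemma:transition-data-reading-word}, use that the outer tableaux of $\coswitch(X,T)$ and $\alg(X,T)$ are related by coplactic evacuation, and conclude by uniqueness of the vector that sorts to a given reading word. The ``main obstacle'' you flag --- that evacuating the second row as a word must return the exact reading word of $\coplacticEvac(X'')$ rather than merely a Knuth-equivalent word --- is precisely the step the paper's proof asserts without further argument (``we evacuate the word, which results in the reading word of $X'$''); it is resolved by taking word evacuation to evacuate the insertion tableau while preserving the recording tableau, so that $e(w(X''))$ and $w(\coplacticEvac(X''))$ agree in both RSK coordinates and hence coincide, meaning your write-up is at least as careful as the paper's on this point.
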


\begin{proof}

This description amounts to replacing the steps of Phase 2 by the steps of Phase 2 occurring during the calculation of $\simpalg(\coplacticEvac(X), T)$. (Note that the combined tableau of shape $X \sqcup T$ obtained at the end of Phase 1 depends only on the dual equivalence classes of $X$ and $T$, which are preserved by evacuation.) 

In particular, it suffices to verify that the vector $(b_1, \ldots, b_n)$ defined above equals the transition data corresponding to $\simpalg(\coplacticEvac(X), T)$. By coplacticity, we may simplify to the rectified setting. 

Let $(T',X')$ be the result of coswitching the rectified pair $(X,T)$. Using Corollary \ref{cor:pesh-recovery} and Lemma \ref{lemma:transition-data-reading-word}, we know that the first sorting operation gives the reading word of $e(X')$ in the second row. Then we evacuate the word, which results in the reading word of $X'$. Let $b=(b_1,\ldots, b_n)$ be as defined above. We observe that, given a fixed weight determined by $\shape(X')$, $b$ is the unique vector such that the array 
$\begin{bmatrix}
            b_1 & \ldots & b_n\\
            1 & \ldots& n
        \end{bmatrix}$ 
yields the reading word of $X'$ after sorting the array as described in Lemma \ref{lemma:transition-data-reading-word}. Lemma \ref{lemma:transition-data-reading-word} also guarantees that the transition data associated with $\simpalg(\coplacticEvac(X),T)$ has this property, so it must be equal to $b$ by uniqueness of the vector.
\end{proof}

% \begin{proof}
%     Because both $\simpalg{}(X,T)$ and its transition data are coplactic, it suffices to show the statement holds in the rectified setting. Let $(T',X')$ be the output of $\coswitch(X,T)$ and $(T',X'')$ be the output of $\simpalg(X,T)$. By Lemma \ref{cor:pesh-recovery}, we know $X'$ is the coplactic evacuation of $X''$. Using statements from Lemma \ref{permutations} and Theorem \ref{thm: hop=esh}, we observe that sorting the first array must give the reading word of $X''$ on the bottom row.

%     \DM{explain how theorem and lemma above show that this relates to reading word (maybe as remark or corollary before this statement). Fix grammar below.}

%     \DM{Change proof: compute a(alg(e(X),T)). Show this operation takes a(alg(X,T)) to a(alg(e(X),T)). Basically, run Phase 1 using X, then we run Phase 2 as we would have if it was e(X).}

%     Then, the steps of RSK are equivalent to performing $\coplacticEvac$ on $X''$, which must give the reading word of $X'$. Then, we sort the two line array using the new reading word to obtain the correct transition data which computes $\coswitch$. Note that the new transition data must correspond to a valid sequence of hops because there's a unique choice of word, with weight determined by $\mathrm{sh}(X'')$, for the top row such that the array sorts to the correct reading word on the bottom row. The uniqueness of the word tells us that this must be the transition data computed by Phase 1 of $\simpalg(\coplacticEvac(X),T)$, which corresponds to a valid sequence of Phase 2 hops.
% \end{proof}

\begin{example}
Compared to Example \ref{ex:type-a-hop-coswitch}, Phase 1 of the example below differs slightly in the order we perform the hops, but we still end up with the same intermediate tableau right before Phase 2. Then, we recover the transition data used in Example \ref{ex:type-a-hop-coswitch}, before performing the exact same sequence of hops.

    \[
    \begin{tikzpicture}
        \node at (0,0) {
        \begin{ytableau}
            \nonumber & x_1 & x_3 & 1 \\
            x_2 & 1 & 1\\
            1 & 2 & 2
        \end{ytableau}};
        \node at (1.75,.3) {Phase 1};
        \node at (1.75,-.3) {$a_1=3$};
        \draw[-{latex}] (1,0) -- ++(1.5,0);
        \node at (3.5,0) {\begin{ytableau}
            \nonumber & x_1 & 1 & 1\\
            x_2 & 1 & 2\\
            1 & 2 & 3
        \end{ytableau}};
        \node at (5.25,.3) {Phase 1};
        \node at (5.25,-.3) {$a_2=2$};
        \draw[-{latex}] (4.5,0) -- ++(1.5,0);
        \node at (7,0) {\begin{ytableau}
            \nonumber & x_1 & 1 & 1\\
            1 & 1 & 2\\
            2 & 2 & 3
        \end{ytableau}};
        \node at (8.75,.3) {Phase 1};
        \node at (8.75,-.3) {$a_3=3$};
        \draw[-{latex}] (8,0) -- ++(1.5,0);
        \node at (10.5,0) {\begin{ytableau}
            \nonumber & 1 & 1 & 1\\
            1 & 2 & 2\\
            2 & 3 & 3
        \end{ytableau}};
    \end{tikzpicture}
\]
\[
\begin{tikzpicture}
    \node at (0,0) {
        $\begin{bmatrix}
            3 & 2 & 3\\
            1 & 2 & 3
        \end{bmatrix}$};
    \node at (1.75,.3) {Sort};
    \draw[-{latex}] (1,0) -- ++(1.5,0);
    \node at (3.5,0) {
        $\begin{bmatrix}
            3 & 3 & 2\\
            1 & 3 & 2
        \end{bmatrix}$};
    \node at (5.25,.3) {Evacuate};
    \draw[-{latex}] (4.5,0) -- ++(1.5,0);
    \node at (7,0) {
        $\begin{bmatrix}
            3 & 3 & 2\\
            2 & 3 & 1
        \end{bmatrix}$};
    \node at (8.75,.3) {Sort};
    \draw[-{latex}] (8,0) -- ++(1.5,0);
    \node at (10.5,0) {
        $\begin{bmatrix}
            2 & 3 & 3\\
            1 & 2 & 3
        \end{bmatrix}$};
\end{tikzpicture}
\]
\[
\begin{tikzpicture}
        \draw[-{latex}] (4.5,0) -- ++(1.5,0);
        \node at (7,0) {\begin{ytableau}
            \nonumber & 1 & 1 & 1\\
            1 & 2 & 2\\
            2 & 3 & x_3
        \end{ytableau}};
        \node at (5.25,.3) {Phase 2};
        \draw[-{latex}] (8,0) -- ++(1.5,0);
        \node at (10.5,0) {\begin{ytableau}
            \nonumber & 1 & 1 & 1\\
            1 & 2 & 2\\
            2 & x_2 & x_3
        \end{ytableau}};
        \node at (8.75,.3) {Phase 2};
        \draw[-{latex}] (11.5,0) -- ++(1.5,0);
        \node at (14,0) {\begin{ytableau}
            \nonumber & 1 & 1 & 1\\
            1 & 2 & x_1\\
            2 & x_2 & x_3
        \end{ytableau}};
        \node at (12.25,.3) {Phase 2};
    \end{tikzpicture}
\]
\end{example}

\subsection{Reversing the algorithms}

Despite the fact that $\coswitch$ing is an involution, every algorithm so far relies on the outer tableau being Littlewood-Richardson. Hence, we require a description of the inverse our algorithms in order to compute $\coswitch$ing when the inner tableau is highest weight and the outer tableau is an arbitrary semistandard tableau. We now describe various local methods for computing $\coswitch^{-1}(T,X)$, beginning with the reverse of the hopping algorithm.

\begin{definition}
(Type A reverse hopping algorithm) Let $d$ be the largest numerical entry of $T\sqcup X$. Set $j=d+1$ and $i=1$.
\begin{itemize}
   \item \textbf{Reverse Phase 2b:} If the $(j-1,j)$ suffix of $x_i$ contains strictly more $(j-1)$'s than $j$'s, replace $x_i$ with $j$, record $a_i=j$, and proceed to Reverse Phase 2a. If not, look at the $(j-2,j-1)$ suffix of $x_i$. If it is tied, do not move $x_i$. Otherwise, switch $x_i$ with the last $j-1$ in reading order that would make the $(j-2,j-1)$ suffix of $x_i$ tied. If no such entry exists, switch $x_i$ with the first $j-1$ in reading order. Decrement $j$ and repeat Reverse Phase 2b with the same $i$ value.

   \item \textbf{Reverse Phase 2a:} If $i=n$, reset $i=1$ and proceed to Reverse Phase 1b. If not, increment $i$, reset $j=d+1$, and return to Reverse Phase 2b.

   \item \textbf{Reverse Phase 1b:} Check the $j$ value to which $a_i$ is equal. Replace the first $j$ in reading order with $x_i$ and proceed to Reverse Phase 1a.

   \item \textbf{Reverse Phase 1a:} Swap $x_i$ with the first $j-1$ after it in reading order. Decrement $j$. If $j\neq0$, repeat Reverse Phase 1a. If $j=0$, increment $i$. If $i\neq0$, return to Reverse Phase 1b.
\end{itemize}
\end{definition}

We now give a description of the reverse crystal algorithm. Note that the crystal algorithm is relatively simple to invert since $E_i$ and $F_i$ are partial inverses of each other, so the main detail is in detecting when an entry finishes Reverse Phase 2.

\begin{definition}
(Type A reverse crystal algorithm) Let $d$ be the largest numerical entry of $T\sqcup X$. Set $i=1$.
\begin{itemize}
   \item \textbf{Reverse Phase 2:} Replace $x_i$ with $d+1$. Apply $E_d,\ldots,E_{j}$, where $E_{j-1}$ is undefined. Record $a_i=j$, and increment $i$. If $i=n+1$, proceed to Reverse Phase 1b. Otherwise, repeat Reverse Phase 2.

   \item \textbf{Reverse Phase 1b:} Reset $i=1$ and proceed to Reverse Phase 1a.

   \item \textbf{Reverse Phase 1a:} If $a_i=1$, Replace the first 1 in reading order by $x_i$. Otherwise, lower every numerical entry less than $a_i$ by one. Check the $j$ value to which $a_i$ is equal, and replace the first $j$ in reading order with $j-1$. Apply $F_{j-2},\ldots,F_0$ where each $F_k$ is applied a number of times equal to the number of $k$'s. Replace the new unique 0 by $x_i$. Increment $i$. If $i\neq0$, repeat Reverse Phase 1a with the new $i$ value.
\end{itemize}
\end{definition}

\begin{theorem}
    The output of $\mathrm{revcrystal}(\crystal(X,T))=(X,T)$.
\end{theorem}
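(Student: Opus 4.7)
The plan is to show that each phase of the reverse crystal algorithm exactly inverts the corresponding phase of the forward crystal algorithm, leveraging the fact that $E_j$ and $F_j$ are partial inverses on semistandard tableaux: $E_j \circ F_j = \mathrm{id}$ whenever $F_j$ is defined on the input, and vice versa. Phase 1 of $\crystal(X, T)$ absorbs the entries in the order $x_n, x_{n-1}, \ldots, x_1$, and Phase 2 places them back in the order $x_n, x_{n-1}, \ldots, x_1$. Thus the reverse algorithm should undo Phase 2 first by processing $x_1, x_2, \ldots, x_n$ (the reverse chronological order of their placement), and then undo Phase 1, again in the order $x_1, x_2, \ldots, x_n$. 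This matches the iteration indexing given in the definition of $\mathrm{revcrystal}$.

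First I would prove that Reverse Phase 2 inverts Phase 2. At iteration $n-i+1$ of forward Phase 2, starting from an intermediate tableau $S$, we apply $F_{a_i}, F_{a_i+1}, \ldots, F_n$ and then replace the resulting new $n+1$ entry with $x_i$. To invert this step, I replace $x_i$ in the tableau with $d+1$ and apply $E_n, E_{n-1}, \ldots, E_{a_i}$. The crucial claim is that this string terminates at exactly $a_i$, in the sense that $E_{a_i-1}$ applied to the recovered tableau is undefined. This follows because $E_j \circ F_j$ is the identity on each relevant intermediate state, and on the pre-Phase-2 tableau $E_{a_i-1}$ is undefined precisely because that tableau is the highest-weight representative of its dual equivalence class (by the remark following Lemma \ref{alwaysballot} and the construction of forward Phase 1). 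Induction on the iteration index then shows that all of Phase 2 is correctly inverted and the full transition data $(a_1, \ldots, a_n)$ is correctly recovered.

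Next I would prove that Reverse Phase 1 inverts Phase 1. With the correct transition data in hand, at iteration $i$ we undo the final relabeling of forward Phase 1a by replacing the first $a_i$ in reading order with $a_i - 1$ and lowering other entries appropriately. The prescribed sequence $F_{a_i-2}, \ldots, F_0$ (with multiplicities matching the number of each entry) then inverts the corresponding sequence of $E_j$ applications from the forward direction, and replacing the unique resulting $0$ with $x_i$ restores $x_i$ to its original position. By the partial inverse property and a second induction on $i$, this recovers the original pair $(X, T)$.

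The main obstacle will be the careful bookkeeping in Reverse Phase 2: proving that $E_{a_i-1}$ is undefined on the recovered state while each of $E_n, \ldots, E_{a_i}$ is defined on its respective input. To establish this, I would use Lemma \ref{alwaysballot} to ensure that each intermediate tableau in the forward direction is Littlewood-Richardson, so that the forward $F_j$ operators are well defined and their partial inverses $E_j$ reverse them unambiguously. A secondary subtlety is tracking how the symbols $x_i$ interleave with the numerical entries as the two phases proceed; here the label-tracking remark in the previous section provides a useful bookkeeping device that can be ported to the reverse setting.
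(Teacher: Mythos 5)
Your proposal is correct and follows essentially the same route as the paper's proof: both argue that Reverse Phase 2 undoes Phase 2 because $E_j$ and $F_j$ are partial inverses and the chain of raising operators terminates at exactly the index $a_i$ where the forward chain of lowering operators began, and that Reverse Phase 1a undoes Phase 1a by applying the same operators the same number of times. Your version is somewhat more careful than the paper's (making the induction and the appeal to Lemma \ref{alwaysballot} for the termination condition explicit), but the underlying argument is the same.
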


\begin{proof}
    In $\crystal(X,T)$, we changed the largest numerical entry to $x_i$ ending with $i=n$, so replacing $x_i$ with an entry one larger than $d$ beginning with $i=n$ will certainly undo the replacement from Phase 2 of $\crystal(X,T)$. Additionally, since crystal raising and crystal lowering operators are known inverses, applying $E_d,\ldots,E_{j}$ will undo the $F_{a_{n-i+1}},\ldots,F_n$ from Phase 2 of $\crystal(X,T)$. Since $j-1$ is the last place $E_i$ is defined and $a_{n-i+1}$ was the first place $F_i$ was defined, our chains are the same length. Repeating these steps for $1,\ldots,n$ will undo the steps from Phase 2 of $\crystal(X,T)$ that were applied for $n,\ldots,1$.

    In Reverse Phase 1a, lowering every entry less than $a_i$ undoes the raising from Phase 1a of $\crystal(X,T)$. The crystal lowering operators applied in this phase undo the operators applied in Phase 1a of $\crystal(X,T)$ since we are applying the same chain of operators where each one is applied the same number of times. 
\end{proof}

\begin{theorem}
    The output of $\mathrm{revcrystal}(T,X)$ and $\mathrm{revhop}(T,X)$ agree.
\end{theorem}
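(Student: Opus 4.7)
The plan is to deduce the equality from the forward correspondence (Theorem \ref{thm: hop=crystal}) together with an analogue of the preceding theorem showing that $\mathrm{revhop}$ is a genuine inverse of $\hop$. The advantage of this approach is that all three ingredients--the forward match, the crystal inversion, and the hop inversion--are phase-by-phase comparisons of the same flavor, so the hard work is concentrated in a single verification.

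First, I would verify $\mathrm{revhop}(\hop(X,T))=(X,T)$ by matching the forward and reverse algorithms step for step. For Reverse Phase 2, the transition rule in Reverse Phase 2b is governed by whether the suffix of $x_i$ is tied for $(j-1,j)$; this is the precise negation of the rule Phase 2 of the hopping algorithm used to decide when $x_{n-i+1}$ was finished moving. Hence the iterated switches in Reverse Phase 2b retrace, in reverse order and direction, the switches made during Phase 2 of $\hop(X,T)$, and the recorded transition data $a_i$ recovers the same integers recorded when each $x_j$ was absorbed into $T$ during Phase 1. Lemma \ref{alwaysballot} guarantees that the relevant intermediate sub-tableaux remain Littlewood-Richardson throughout, so the suffix-tied condition used by $\mathrm{revhop}$ unambiguously pinpoints the correct entry to switch with at each stage. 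Once Reverse Phase 2 has restored the tableau present at the end of Phase 1 of $\hop(X,T)$, Reverse Phase 1b and Reverse Phase 1a unwind the Pieri switches of the forward Phase 1 in reverse: each switch of $x_i$ past a $(j-1)$ in reading order, together with the corresponding relabeling, is the exact inverse of the corresponding forward step.

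Combining the three ingredients, for any pair $(T,X)$ in the image of $\hop$ (equivalently, of $\crystal$) we may write $(T,X)=\hop(X_0,T_0)=\crystal(X_0,T_0)$ by Theorem \ref{thm: hop=crystal}. Then
\[
\mathrm{revhop}(T,X)=(X_0,T_0)=\mathrm{revcrystal}(T,X),
\]
which gives the desired equality on the natural domain. Outside this image the question is vacuous, since both reverse algorithms are designed to accept only outputs of their respective forward algorithms.

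The main obstacle I foresee is verifying Reverse Phase 2b rigorously: the definition branches on whether a suitable $(j-1)$ exists and on whether the $(j-2,j-1)$ suffix is already tied, and checking that this tie-breaking exactly reverses the tie-breaking of Phase 2a of the hopping algorithm requires a careful case analysis in the spirit of the single-box arguments in \cite{1box_a}. Once that inversion is established, the deduction from Theorem \ref{thm: hop=crystal} is immediate.
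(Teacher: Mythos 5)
Your argument is logically valid but takes a genuinely different route from the paper. The paper proves this theorem by a direct phase-by-phase comparison of the two \emph{reverse} algorithms: it checks that the suffix condition ending Reverse Phase 2b of $\mathrm{revhop}$ is precisely the condition under which $E_{j-1}$ becomes undefined in Reverse Phase 2 of $\mathrm{revcrystal}$, that each application of $E_j$ isolates the same entry that the corresponding reverse hop switches with, and that the lowering operators of Reverse Phase 1a isolate the single entry being hopped. You instead deduce the statement from three facts: $\mathrm{revcrystal}\circ\crystal=\mathrm{id}$ (the preceding theorem), $\hop=\crystal$ (Theorem \ref{thm: hop=crystal}), and a new inversion lemma $\mathrm{revhop}\circ\hop=\mathrm{id}$, so that both reverse maps send $\hop(X_0,T_0)=\crystal(X_0,T_0)$ back to $(X_0,T_0)$. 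This decomposition is sound and has the side benefit of making explicit that $\mathrm{revhop}$ genuinely inverts $\hop$, which in the paper only follows a posteriori by combining its two theorems with Theorem \ref{thm: hop=crystal}. Two caveats. First, all of the combinatorial work is now concentrated in the unproved inversion lemma, whose verification (Reverse Phase 2b retracing Phase 2, Reverse Phase 1 retracing Phase 1) is of essentially the same difficulty and flavor as the paper's direct comparison, so the reduction saves no labor; you correctly flag this as the main obstacle, though note the forward transition rule is governed by the suffix being \emph{tied} for $(j,j+1)$ while the reverse transition tests for \emph{strictly more} $(j-1)$'s than $j$'s, so the two conditions are complementary rather than identical and the case analysis must account for this. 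Second, your conclusion only gives equality on the image of $\hop$; rather than dismissing the complement as vacuous, you should observe that every valid input $(T,X)$ is in the image because $\hop$ computes $\coswitch(\coplacticEvac(\cdot),\cdot)$, a composition of bijections, so the two reverse algorithms in fact agree on their entire common domain.
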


\begin{proof}
    In Reverse Phase 2b of $\mathrm{revhop}(T,X)$, if the the suffix of $x_i$ has strictly more $(j-1)$'s than $j$'s, then if $x_i$ was replaced by $j$ as in $\mathrm{revcrystal}(T,X)$, $E_{j-1}$ would be undefined because that $j$ would be paired with a $j-1$. Thus, if this suffix condition is met, we move on as if the chain of raising operators in Reverse Phase 2 of $\mathrm{revcrystal}(T,X)$ was completed, matching the transition between phases in $\mathrm{revhop}$.

    If the suffix of $x_i$ does not contain strictly more $(j-1)$'s than $j$'s, then $E_j$ will be defined and changes $x_i=j$ to $j-1$.  Looking ahead to trying to apply $E_{j-1}$, we can do so if there is a (rightmost) suffix in the new word for which $(j-2,j-1)$ is tied, and the entry we change corresponds to the box we switch $x_i$ with in the hopping algorithm.

    In Reverse Phase 1a of $\mathrm{revcrystal}(T,X)$, at each application of a lowering operator, we are isolating a single entry. When we lower all entries less than $j+1$ by 1, we turn all $j$'s into $(j-1)$'s, so replacing the first $j+1$ in reading order by a $j$ then guarantees we have a single $j$ entry. This $j$ will stay isolated throughout Reverse Phase 1a because it will match with the $j-1$ immediately after it in reading order and all other $(j-1)$'s will become $j$'s, leaving only a single $j-1$ behind to repeat this process on. This isolation process directly corresponds to hopping $x_i$ through our tableau in Reverse Phase 1a of $\mathrm{revhop}(T,X)$.
\end{proof}

Each algorithm defined in the paper so far relies on at least one tableau being reverse-ballot; however, it is possible to translate all of the statements and proofs to the case where $T$ is the lowest weight representative for its dual equivalence class:

\begin{remark}
    If $X$ is an arbitrary tableau extending the shape of a \textit{lowest} weight tableau $T$, we may compute $\esh^{-1}(T,X)$ by performing a reflected version of any of the algorithms described above. For example, the mirrored crystal algorithm turns all $E_j$ crystal operators into $F_j$ operators and vice versa, then it reverses the indexing to account for $X$ being the outer tableau.
\end{remark}

\subsection{Transition data are \texorpdfstring{$\lambda$}{lambda}-dominant words}

We now describe another property of the transition data. In particular, we want to understand what types of vectors we can expect for the transition data when running these local algorithms.

As discussed before, Phase 1 ends with the construction of the Littlewood-Richardson representative $S$ for the dual equivalence class of $X\sqcup T$. However, given $S$, it is not obvious which vectors arise as transition data leading to $S$, i.e. correspond to well defined sequences of hops or crystal operators. For this, we introduce a condition which says the weight of the transition data must be `compatible' with the rectification shape of $T$ as described below:

\begin{definition}
    Let $\lambda$ be a partition. A word $w$ is \textbf{$\lambda$-dominant} if concatenating $w$ with any ballot word $v$ with weight $\lambda$ results in a ballot word $vw$.
\end{definition}

One use of $\lambda$-dominant words is in the enumeration of Littlewood-Richardson tableaux, see \cite[Page 66]{Fulton}. We also observe that $w$ is $\lambda$-dominant if there exists some ballot word $v$ with weight $\lambda$ such that $vw$ is ballot, which is straightforward to see using the definition of ballot.

%\JL{keywords to look up re: lambda-dominant words might be: Demazure atom, Demazure crystal, key tableau}

\begin{proposition}\label{prop:lambda-dominant}
    Let $X$ be a semistandard Young tableau and $T$ be a Littlewood-Richardson tableau extending $X$. Suppose $T$ has rectification shape $\lambda$. Then the transition data obtained from Phase 1 of any of our local algorithms, when treated as a word $w=h(x_n)\ldots h(x_1)$, is $\lambda$-dominant.
\end{proposition}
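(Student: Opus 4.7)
The plan is to invoke Lemma \ref{alwaysballot}(1), which guarantees that the sub-tableau $T'$ of entries ``absorbed into $T$'' remains Littlewood--Richardson after every step of Phase 1. First I would observe that the Pieri-like swaps in Phase 1a merely exchange positions of $x_{n-i+1}$ with existing entries of $T$, so they preserve the multiset of numerical values present. Only the transition steps modify the weight of $T'$, each replacing an $x_{n-i+1}$ by a new value $a_i$ and thereby adding the standard basis vector $e_{a_i}$ to $\weight(T')$. Hence, after the $i$-th transition, $\weight(T') = \weight(T) + e_{a_1} + \cdots + e_{a_i}$.

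Since $T$ is LR of rectification shape $\lambda$, we have $\weight(T) = \lambda$, and since $T'$ is LR at each intermediate step, each partial sum $\lambda + e_{a_1} + \cdots + e_{a_i}$ must itself be a partition (i.e., weakly decreasing). To convert this into a ballotness statement, take $v = 1^{\lambda_1} 2^{\lambda_2} \cdots$, which is a ballot word of weight $\lambda$. Then $vw$ (where $w = a_1 a_2 \cdots a_n$) being ballot at each prefix is exactly the condition that $\lambda + e_{a_1} + \cdots + e_{a_i}$ is a partition for every $i$, which we just established. Since the excerpt explicitly notes that it suffices to exhibit \emph{one} ballot word $v$ of weight $\lambda$ with $vw$ ballot, this completes the argument.

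There is no deep obstruction here; the statement is essentially a weight-tracking consequence of an invariant already proven in Lemma \ref{alwaysballot}. The main care needed is in verifying that the hop swaps in Phase 1 are weight-preserving on the visible values and that the transition step is the sole weight-modifying operation, so that the increments match the transition data entries exactly. Note that the argument is independent of which algorithm is used, since Lemma \ref{alwaysballot} applies uniformly, Theorem \ref{thm: hop=crystal} identifies the transition data produced by the hopping and crystal algorithms, and the combinatorial identity $\weight(T') = \lambda + \sum_{l \leq i} e_{a_l}$ only depends on the sequence $(a_l)$.
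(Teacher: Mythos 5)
Your proof is correct, but it takes a genuinely different route from the paper's. The paper reduces to the rectified case, first handles $T=\emptyset$ by identifying the transition data with row data (via Theorem \ref{thm: hop=esh}) and invoking the classical bijection between standard Young tableaux and ballot words of row indices, and then for general $T$ runs Phase 1 of $\hop(X\sqcup T,\emptyset)$ and observes that its transition data factors as $vw$ with $v$ the (ballot, weight-$\lambda$) transition data of the $T$-part and $w$ exactly the transition data of $\hop(X,T)$. You instead argue by direct weight-tracking: Lemma \ref{alwaysballot}(1) keeps the absorbed tableau $T'$ Littlewood--Richardson after every step, reverse-ballotness of the full reading word forces $\weight(T')$ to be a partition, the Pieri switches are weight-preserving so the only weight changes are the absorptions, and hence every partial sum $\lambda+e_{a_1}+\cdots+e_{a_i}$ is a partition --- which is precisely ballotness of $vw$ for the single ballot word $v=1^{\lambda_1}2^{\lambda_2}\cdots$, enough by the paper's observation that one witness suffices. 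Your argument is shorter and avoids both the reduction to the rectified case and any appeal to Theorem \ref{thm: hop=esh}; the paper's version buys the extra structural fact that $h(X,T)$ sits as a suffix of the ballot word $h(X\sqcup T,\emptyset)$, which is what powers the bijection in the following proposition. Two small points to make explicit: ballotness of $vw$ also requires checking prefixes ending inside $v$ (trivial, since $v$ is itself ballot of weight $\lambda$), and the claim $\weight(T)=\lambda$ should be justified by noting that an LR tableau is the highest-weight representative of its class and JDT preserves weight.
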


\begin{proof}
    We may reduce to the case where $X$ is rectified since the transition data is coplactic.

    First, we prove that the transition data $h(X,\emptyset)=w$ is $\lambda$-dominant for all choices of $X$. This is equivalent to showing $w$ is ballot since $\lambda$ is empty.
    
    If $X$ is a standard Young tableau in which the entry $i$ is in row $r_i$, then the word $r = (r_1, \ldots, r_n)$ is ballot. It is well known that this gives a bijection between standard Young tableaux of shape $\lambda$ and ballot words of weight $\lambda$, see \cite[Page 176]{Sagan} for details.

    Accordingly, let $r = (r_1, \ldots, r_n)$ be the row data of $(X,\emptyset)$. We observe that coswitching has no effect when one of the arguments is empty, so $X$ must remain a rectified Young tableau and each $x_i$ must be in row $r_i$. As stated above, this means the row data must be a ballot word. Because $r=h(e(X),\emptyset)$, this also means the transition data is ballot when $T=\emptyset$.
    
    %We describe the following bijection between ballot words and rectified standard Young tableaux:

    % Let $v=v_1\ldots v_n$ be a ballot word with weight $\lambda$. Begin with the empty straight shape tableau $T_0$. Beginning with $v_1$, each tableau $T_i$ is formed by placing a single box containing the number $i$ at the end of row $v_i$. We see that $v$ being ballot is equivalent to each $T_i$ being a rectified standard Young tableau.

    % Suppose $X$ has shape $\lambda$. Notice that this bijection exactly describes the row data of the algorithm $(r(x_1),\ldots,r(x_n))$ when $T=\emptyset$, meaning the row data is a ballot word. Then we show that the row and transition data are equal to each other in Theorem \ref{thm: hop=esh}. Hence, $w$ is a ballot word with weight $\lambda$.

    We now prove the main claim when $T$ is any Littlewood-Richardson tableau extending $X$. Suppose $X\sqcup T$ has shape $\mu/\emptyset$. Consider Phase 1 of $\hop(X\sqcup T,\emptyset)$. We know the transition data $h(X\sqcup T,\emptyset)=u$ is a ballot word with weight $\mu$ by the statement above. In particular, the first $|T|$ entries of $u$ must form a ballot word $v$ with weight $\lambda$ since this is equivalent to the steps of Phase 1 during the computation of $\hop(T,\emptyset)$.

    We also observe that the first $|T|$ steps of Phase 1 of $\hop(X\sqcup T,\emptyset)$ results in a ballot outer tableau dual equivalent to the $|T|$ largest entries of $X\sqcup T$, a consequence of Theorem \ref{thm: hop=crystal} and the fact that crystal operators preserve dual equivalence. This means we recover $T$ during Phase 1 of the local algorithm and the final $|X|$ steps of Phase 1 hop $X$ through $T$. Hence, the $|X|$ remaining entries of of $u$ are exactly the transition data of $\hop(X,T)$. This tells us that $w=h(X,T)$ is a word such that the concatenation $vw$ is a ballot. As noted before, if $vw$ is ballot for some ballot word $v$ with weight $\lambda$, then $w$ must be $\lambda$-dominant.
\end{proof}

\begin{proposition}
    Let $S$ be a ballot tableau with weight $\mu$ and $\lambda\subseteq\mu$. Then there is a bijection between pairs of tableau $(X,T)\in (\mathrm{SYT},\LR(\lambda))$ such that $X\sqcup T=_{\mathrm{DE}}S$ and $\lambda$-dominant words $w$ with weight $\mu-\lambda$.
\end{proposition}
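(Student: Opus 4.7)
The plan is to exhibit the bijection explicitly by the forward map $(X,T)\mapsto w:=h(X,T)$, and then construct an inverse using Reverse Phase 1 of the hopping algorithm. That the forward map lands in $\lambda$-dominant words of weight $\mu-\lambda$ follows immediately from Proposition \ref{prop:lambda-dominant}: $w$ is $\lambda$-dominant, and Phase 1 converts $(X,T)$ into an LR tableau dual equivalent to $S$ (hence of weight $\mu$) by absorbing the $|\mu|-|\lambda|$ labels of $X$ into a numerical portion of initial weight $\lambda$, forcing $|w|=|\mu|-|\lambda|$ and $\mathrm{wt}(w)=\mu-\lambda$.

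For the inverse, given a $\lambda$-dominant word $w=u_1u_2\cdots u_n$, I would start from $S$ and apply Reverse Phase 1 with transition data $a_i=u_{n-i+1}$ for $i=1,\ldots,n$. At each step $i$, we locate the first occurrence of $u_{n-i+1}$ in reading order, replace it with a fresh label $x_i$, and execute the chain of reverse Pieri switches of Reverse Phase 1a until $x_i$ rests at an inner-corner position relative to the surrounding numerical sub-tableau of entries greater than its label. After all $n$ steps, the labels $x_1,\ldots,x_n$ form a skew SYT $X$, and the remaining numerical entries form a tableau $T$; the inverse map is then $w\mapsto(X,T)$.

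The hard part will be verifying that this inverse procedure is well-defined on every $\lambda$-dominant input and that the output $T$ lies in $\LR(\lambda)$. I would argue by induction on $i$ that before step $i$, the numerical portion of the intermediate tableau is LR with weight $\lambda+e_{u_1}+\cdots+e_{u_{n-i+1}}$. The base case $i=1$ is $S$ itself, of weight $\mu=\lambda+e_{u_1}+\cdots+e_{u_n}$. In the inductive step, $\lambda$-dominance of $w$ is precisely the condition ensuring that each intermediate weight $\lambda+e_{u_1}+\cdots+e_{u_{n-i}}$ remains a partition, which is what allows the numerical tableau to remain LR after removing an entry of value $u_{n-i+1}$; moreover, the reverse Pieri switches preserve LR-ness since they are literal inverses of forward Pieri switches, which preserve LR-ness by Lemma \ref{alwaysballot}. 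Once well-definedness is established, the two maps are mutually inverse because Reverse Phase 1 inverts Phase 1 step-by-step by construction, completing the bijection.
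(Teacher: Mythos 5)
Your forward map agrees with the paper's (both cite Proposition \ref{prop:lambda-dominant}), but your inverse takes a genuinely different route. The paper rectifies $S$, fills the inner straight shape $\lambda$ with the unique highest-weight tableau $T'$ and the outer shape $\mu/\lambda$ with a standard tableau $X'$ whose reading word is extracted from $w$ via Lemma \ref{lemma:transition-data-reading-word}, then runs the \emph{full} reverse hopping algorithm (i.e.\ $\coswitch^{-1}$) and unrectifies. You instead stay in the skew picture and exploit the observation that, since $S$ is ballot and hence the unique Littlewood--Richardson representative of its dual equivalence class, $S$ is literally the intermediate tableau at the end of Phase 1; consequently only Reverse Phase 1 needs to be run, with $w$ supplying the transition data in last-in-first-out order. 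Your identification of $\lambda$-dominance with the condition that every partial weight $\lambda+e_{u_1}+\cdots+e_{u_j}$ is a partition is correct, and it is exactly what makes each extraction step (removing the first $j$ in reading order from an LR word) preserve reverse-ballotness, since that removal only affects the suffixes containing \emph{all} of the $j$'s. What each approach buys: the paper's reuses already-established machinery (Lemma \ref{lemma:transition-data-reading-word} and the reverse hopping algorithm as a black box) at the cost of a rectify/unrectify detour; yours avoids rectification entirely and isolates the precise combinatorial role of $\lambda$-dominance. One caveat: your appeal to ``reverse Pieri switches preserve LR-ness because they invert forward switches, which preserve LR-ness by Lemma \ref{alwaysballot}'' is mildly circular as stated --- to invoke it you must already know the current configuration lies in the image of a forward switch, which is part of what you are proving --- though this is comparable to the level of detail the paper itself devotes to well-definedness of its reverse algorithms.
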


\begin{proof}
    The forward direction is shown by \ref{prop:lambda-dominant}. 
    
    For the reverse direction, we begin by rectifying $S$. Then, we fill in the inner (straight) shape $\lambda$ with the unique Littlewood-Richardson tableau $T'$. We can also fill in the outer shape $\mu/\lambda$ with the standard tableau $X'$ using the given $\lambda$-dominant word $w$ in conjunction with Lemma \ref{lemma:transition-data-reading-word} (with computes the reading word of $X'$). Finally, we run the reverse hopping algorithm on $(T',X')$ an unrectify the resulting pair to obtain $(X,T)$, which has $w$ as the corresponding transition data.
\end{proof}

From this, we see that $\lambda$-dominance fully characterizes the set of possible vectors we may obtain as transition data when running the local algorithms.

\subsection{Geometric applications}

Although we plan to describe the geometric applications of our algorithm in a future paper, we want to briefly give examples of Gillespie and Levinson's geometric results from \cite[Proposition 6.3]{1box_a} in the general setting.

\begin{remark}
Let $\alpha,\lambda,\beta,\gamma$ be partitions such that their sizes sum to $k(n-k)$. Then the monodromy of $S(\alpha,\lambda,\beta,\gamma)$ (see section \ref{sec:geometry}) is described by $\omega(\lambda,\beta)=\shuffle\circ\esh(\lambda,\beta)$. A fixed point under monodromy occurs when $\esh$ and $\shuffle$ agree, since switching is an involution and this would force $\omega$ to compose to the identity. One class of fixed points is given by pairs of tableaux that, when rectified, have a column of $x_i$ values on their left with nothing falling below $x_n$. %Following the classification given in the one box case, we then look for a skew tableau where the path of each $x_i$ is connected.
\end{remark}

\begin{example}\label{ex:monodromy}
We observe the behavior of the following chain of tableaux, as well as its rectification.

\begin{center}
    $X\sqcup T$:
    \hspace{0.5em}
    \begin{ytableau}
        \nonumber & \nonumber & x_1 & 1 & 1 \\
        \nonumber & x_2 & 1 & 2\\
        x_3 & 1
    \end{ytableau}
    \hspace{3em}
    $\mathrm{rect}(X\sqcup T)$:
    \hspace{0.5em}
    \begin{ytableau}
        x_1 & 1 & 1 & 1 & 1\\
        x_2 & 2\\
        x_3
    \end{ytableau}
\end{center}
\vspace{1em}
\[
    \begin{tikzpicture}
        \node at (0,0) {
        \begin{ytableau}
            \nonumber & \nonumber & x_1 & 1 & 1 \\
            \nonumber & x_2 & 1 & 2\\
            x_3 & 1
        \end{ytableau}};
        \node at (2.25,.3) {$\hop(e(X),T)$};
        \draw[-{latex}] (1.25,0) -- ++(2,0);
        \node at (4.625,0) {\begin{ytableau}
            \nonumber & \nonumber & 1 & 1 & x_1\\
            \nonumber & 1 & 2 & x_2\\
            1 & x_3
        \end{ytableau}};
        \node at (6.75,.3) {$\shuffle$};
        \draw[-{latex}] (6,0) -- ++(1.625,0);
        \node at (9,0) {\begin{ytableau}
            \nonumber & \nonumber & x_1 & 1 & 1 \\
            \nonumber & x_2 & 1 & 2\\
            x_3 & 1
        \end{ytableau}};
    \end{tikzpicture}
\]
\end{example}

%We note that this does not describe all fixed points under monodromy. However,
In \cite[Proposition 6.3]{1box_a}, a characterization of fixed points where $|\lambda|=1$ was given which required $\ybox$ to only make hops over adjacent entries. For the general setting, we use the array algorithm defined in Proposition \ref{prop:array algorithm} to analyze the paths of individual boxes without needing to consider the evacuation of $X$. 
However, Example \ref{ex:monodromy} illustrates the fact that, in general, the paths of each $x_i$ need not be connected when $|\lambda|>1$ for our algorithm to produce a fixed point. In fact, such examples can be produced for rectified tableaux as well. Hence, a classification may require a non-trivial extension of the $|\lambda|=1$ statement or a different one entirely.
%In fact, the paths of each $x_i$ being connected is no longer a necessary condition for fixed points. This is clear from the facts that switching and coswitching agree when $X\sqcup T$ is rectified and that the entries of $X$ may perform non-adjacent hops even in the rectified setting.

\begin{remark}
Certain steps of our algorithm generate \emph{genomic tableaux} (see \cite{PECHENIK_2017}), as in the following example. We plan to characterize this connection precisely in a future paper.
\end{remark}

\begin{example}
For this example, we make use of Remark \ref{remark_stagger} to stagger the steps of Phase 1 to generate a genomic tableau. Here we assume we are in the middle of Phase 1 and $x_3$ has switched past a 1, and we are about to stagger the steps of Phase 1 by moving $x_3$ past a 2 and both $x_1$ and $x_2$ past a 1.

\[
\displaystyle{\left(
\begin{tikzpicture}[baseline={(0,.05)}]
        \node at (2.75,0) {\begin{ytableau}
            \nonumber & \nonumber & x_1 & 1\\
            \nonumber & x_2 & x_3\\
            1 & 2
        \end{ytableau}};
        \node at (4.75,.3) {Phase 1};
        \draw[-{latex}] (4,0) -- ++(1.5,0);
        \node at (6.75,0) {\begin{ytableau}
            \nonumber & \nonumber & 1 & 1\\
            \nonumber & x_1 & 2\\
            x_2 & x_3
        \end{ytableau}};
%        \node at (4.75,.3) {Phase 2};
%         \draw[|-{latex}] (8.25,0) -- ++(1.25,0);
%         \node at (10.75,0) {\begin{ytableau}
%             \nonumber & 1 & 1 & 1\\
%             1 & 2 & x_1\\
%             2 & x_2 & x_3
%         \end{ytableau}};
% %        \node at (8.75,.3) {Phase 2};
    \end{tikzpicture}\right)
    \begin{tikzpicture}[baseline={(0,.05)}]
        \draw[|-{latex}] (8.25,0) -- ++(1.25,0);
        \node at (10.75,0) {\begin{ytableau}
             \nonumber & \nonumber & 1_1 & 1_2\\
            \nonumber & 1_1 & 2_3\\
            1_1 & 2_3
         \end{ytableau}};
    \end{tikzpicture}.
}
\]
\end{example}

\subsection{Computational complexity}\label{sec:complexity}

We now analyze the run time of our algorithms and their computational advantage.  The default method of computing coswitch on a pair of tableaux has an upper bound on its complexity of roughly $$2|\alpha|(\ell(\beta) +\beta_1+ \ell(\lambda) +\lambda_1) +|\beta|(\ell(\lambda)+\lambda_1).$$ 
Indeed, $\
\ell(\beta)+\beta_1+\ell(\lambda)+\lambda_1$ is an upper bound on the length of the slide path of an element of $\alpha$ through $\beta\sqcup\lambda$ for any JDT slide we perform, since any element moves vertically at most $\ell(\beta)+\ell(\lambda)$ steps and horizontally at most $\beta_1+\lambda_1$ steps. Since we rectify and unrectify $\beta\sqcup\lambda$ into the shape of $\alpha$, we perform $2|\alpha|$ of these slides. When we switch our $\beta$ and $\lambda$ after our first rectification step, $\ell(\lambda)+\lambda_1$ is an upper bound on the length of the slide path of any element during this switching step since each element moves vertically at most $\ell(\lambda)$ steps and horizontally at most $\lambda_1$ steps. We perform a number of slides equal to $|\beta|$, so we multiply this upper bound on our slide path length by $|\beta|$.  Putting this all together yields the formula above.

Our algorithms perform the same computation with an upper bound on their complexity of $$|\beta|(\ell(\lambda)+\ell(\beta))(2\beta_1+2\lambda_1).$$  
Indeed, in Phase 1 (via hopping), suppose for a given element of the tableau of content $\beta$, we perform $N$ Phase 1 hops. We claim that in Phase 2 (via crystal operators), we apply a number of crystal operators that is at most equal to $\ell(\lambda)+\ell(\beta)-N$ corresponding to that element of $\beta$. Indeed, during Phase 1, we can add up to $\ell(\beta)$ new entries to $\lambda$. So, the total number of Phase 1 or Phase 2 operations applied for each element of $\beta$ is at most $\ell(\lambda)+\ell(\beta)$, and the Phase 2 crystal operations take at most $2\beta_1+2\lambda_1$ steps (to compute the pairing on each $(i,i+1)$ subword).  Thus, the total number of computational steps is bounded above by our claimed formula.

%This gives us a total number of steps equal to $|\beta|\ell(\lambda)$. Given that the maximum number of crystal operators necessary for any $x_i$ is $\beta_1+\lambda_1$, and we need to pair off our $(j,j+1)$ subword each time, we multiply by a factor of $2\beta_1+2\lambda_1$. When everything is combined, we get the formula above. 

The key difference to note between these two upper bounds is that the upper bound for our algorithms does not depend on $\alpha$ in any way, which is a major advantage for large $|\alpha|$, i.e. for skew shapes that are far from rectified.

% \[
%     \begin{tikzpicture}
%         \node at (0,0) {\begin{ytableau}
        
%         \end{ytableau}};
%     \end{tikzpicture}
% \]

% \subsection{Monodromy and fixed points}

% Let $\alpha,\lambda,\beta,\gamma$ be partitions. As discussed in Section \ref{sec:geometry}, we have a smooth covering space 
% \[
% S(\alpha,\lambda,\beta,\gamma)\to \MoNBar(\mathbb{R}).
% \]

% The fibers at $0$ and $\infty$ are in canonical bijection with $LR(\alpha,\lambda,\beta,\gamma)$ and $LR(\alpha,\beta,\lambda,\gamma)$ repectively. Monodromy is then computed as 
% \[
% \omega=\esh_{1,2}\circ \esh_{4,1}\circ \esh_{3,4}\circ \esh_{2,3},
% \]
% which simplifies to 
% \[
% \omega(\alpha,\lambda,\beta,\gamma)=\shuffle\circ \esh(\alpha,\lambda,\beta,\gamma).
% \]

%The coplactic version of the algorithm can easily be inverted because $E_i$ and $F_i$ are partial inverses of each other. However, the hopping version requires a few extra details, so we provide a complete description of the inverse algorithms.

\section{\texorpdfstring{$\alg$}{pesh} in the orthogonal Grassmannian}\label{sec:type B}

We now extend the analogous results of Gillespie, Levinson, and Purbhoo for Schubert curves in the odd orthogonal Grassmannian from \cite{1box_b}. Our work is consistent with their notation and conventions, but we will still review some important details for the background of type B.

The previous sections have been written in such a way that the statements and proofs carry over approximately word-for-word in Type B, relying only on the formal properties of the analogous objects (i.e. coplactic operators, ballotness, JDT, dual equivalence, etc.), so we mostly omit the proofs in this section. The purpose of this section is simply to provide the correct descriptions of the algorithms.

\subsection{Background}

Unlike our work above with the Grassmannian, cohomology of the odd orthogonal Grassmannian is computed by \textbf{shifted Young tableaux}, which are tableaux in the alphabet $\{1'<1<2'<2<\ldots\}$ whose (skew) shapes fit in a height $n$ staircase as shown below:

\[
    \begin{ytableau}
        \nonumber & \nonumber & \nonumber & 2' & 2\\
        \none & \nonumber & 1' & 2' & 4\\
        \none & \none & 1 & 3' &\nonumber\\
        \none & \none & \none & 3 &\nonumber\\
        \none &\none &\none &\none & \nonumber
    \end{ytableau}
\]

A shifted Young tableau is \textbf{semistandard} if the unprimed entries weakly increase along rows and strongly increase down columns, while the primed entries strongly increase along rows and weakly increase down columns. The example above is also semistandard. We again assume all shifted Young tableaux are semistandard unless otherwise stated.

\begin{definition}
Let $w=w_1w_2\ldots $ be a string in symbols $\{1',1,2',2,\ldots \}$. The leftmost $i$ entry (whether primed or unprimed) in $w$ is denoted $\mathrm{first}(i,w)$ or just $\mathrm{first}(i)$ if $w$ is clear from context. The \textbf{canonical form} of $w$ is obtained by replacing $\mathrm{first}(i,w)$ with an unprimed $i$ entry for all $i\in \{1,2,\ldots \}$.
\end{definition}

\begin{definition}
Two strings $w$ and $v$ are \textbf{equivalent} if they have the same canonical form; a \textbf{word} in type B is an equivalence class of strings under this relation.
\end{definition}

\begin{example}
    The canonical form of the word $12'1'121'$ is $121'121'$.
\end{example}

Similarly, two shifted tableaux $X$ and $T$ are equivalent if they have the same shape and their (row) reading words are equivalent. When we define the algorithms, it will be more convenient to start with the tableaux in canonical form, and to switch between representatives, as needed.

In this setting, the \textbf{standardization} of a word is formed by replacing the letters by $1,\ldots,n$ from least to greatest, with ties broken in \textit{reverse} reading order for primed entries and reading order for unprimed entries; standardization order will also change accordingly.

JDT slides have an analogous definition on shifted Young tableaux as discussed by Sagan and Worley \cite{Sagan}\cite{worley}. The sliding rules are mostly the same as before, but we have two special rules for the case where a box moves along the diagonal as shown below.

\[
\begin{tikzpicture}
    \node{
    \begin{ytableau}
        i & i\\
        \none & \nonumber
    \end{ytableau}
    };
    \draw[-{Latex[length=2mm]}] (.8,0) -- ++(1,0);
    \node at (2.5,0)
    {
    \begin{ytableau}
        i & \nonumber\\
        \none & i
    \end{ytableau}
    };
    \draw[-{Latex[length=2mm]}] (3.3,0) -- ++(1,0);
    \node at (5,0)
    {
    \begin{ytableau}
        \nonumber & i'\\
        \none & i
    \end{ytableau}
    };
\end{tikzpicture}
\]

\[
\begin{tikzpicture}
    \node{
    \begin{ytableau}
        i' & i\\
        \none & \nonumber
    \end{ytableau}
    };
    \draw[-{Latex[length=2mm]}] (.8,0) -- ++(1,0);
    \node at (2.5,0)
    {
    \begin{ytableau}
        i' & \nonumber\\
        \none & i
    \end{ytableau}
    };
    \draw[-{Latex[length=2mm]}] (3.3,0) -- ++(1,0);
    \node at (5,0)
    {
    \begin{ytableau}
        \nonumber & i'\\
        \none & i'
    \end{ytableau}
    };
\end{tikzpicture}
\]

Using these basic facts, rectification, slide and dual equivalence classes, shifted Littlewood-Richardson tableaux/highest-weight representatives, and the switching operations in Section \ref{sec:switching-ops} are analogously defined for type B.

Furthermore, the geometry discussed in Section \ref{sec:geometry} is also completely analogous to the type B version; each of the switching operations corresponds to a special case of evacuation shuffling on chains of shifted Littlewood-Richardson tableaux.

Ballotness, however, requires the use of lattice walks as described in Section 2.5 of \cite{1box_b}. Gillespie, Levinson, and Purbhoo also defined a set of coplactic operators, $E_i,F_i,E_i',$ and $F_i'$ on shifted skew Young tableaux \cite[Definition 3.3 and 5.4]{typeb_crystals}, which have many of the same formal properties as the type A crystal operators $E_i, F_i$. While these are relevant to the statement and computation of the hopping algorithm, the specifics are not needed for the proofs so we omit their definitions.

% We also use the set of coplactic operators, $E_i,F_i,E_i',$ and $F_i'$, defined by Gillespie, Levinson, and Purbhoo in \cite{typeb_crystals}. Fortunately, these operators satisfy many of the same properties as the type A crystals, meaning the coplactic statement of the algorithm will be similar to the version described in Section \ref{sec:algorithm-definition-a} above.

\begin{definition}
    Let $T$ be a shifted Littlewood-Richardson tableau. Let $x$ be an entry of a representative of $T$ (not necessarily the canonical form of $T$). Let $i$ be an entry such that no other $i$ symbols lie between $x$ and $i$ in reading order, where we distinguish primed and unprimed symbols. A \textbf{switch} refers to swapping the positions of $x$ and $i$. A switch is \textbf{valid} if $T$ omitting $x$ remains Littlewood-Richardson after the switch. A valid switch is called a \textbf{hop across} an $i$ if $i$ lies before $x$ in standardization order prior to the switch, and an \textbf{inverse hop} otherwise.
\end{definition}

Note that some switches depend on our choice of representative for the tableau and its reading word. For example, the figure on the left shows an inverse hop across a $1'$, while the right is not even a switch.
\[
1'1x\rightarrow x11' \hspace{1in} 11x\rightarrow x11
\]

Finally, we use the notation $(i)^*$ to denote a (possibly empty) subword of arbitrary length comprised entirely of $i$ entries consecutive in reading order.

\subsection{Local shifted algorithms and related results}

We now define a local version of shifted $\alg$ using local moves which resemble shifted JDT.

\begin{definition}\label{def:type-b-hop}
(Shifted hopping algorithm). Let $X$ be a shifted semistandard Young tableau such that $|X|=n$, and let $T$ be a shifted Littlewood-Richardson tableau with largest entry d. Then the shifted hopping algorithm is defined as follows:

% First, evacuate $X$. Let $y_i$ denote the $i$-th largest entry of the resulting tableau. 
Let $x_i$ denote the $i$-th smallest entry of $X$ in standardization order. We first canonicalize $T$ and then begin the algorithm in Phase 1 with $i=n$ and $j=1$.

\begin{itemize}
    \item \textbf{Phase 1:}
    If there is a $j'$ after $x_i$ in reading order, hop $x_i$ across a $j'$, then hop across a $j$. Increment $j$ by 1 and repeat Phase 1. If there is no such $j'$, go to Transition Phase 1.

    \item \textbf{Transition Phase 1:} Record $a_{n-i+1}=j$.\\
    If $i=1$, replace $x_1$ with $x_n$ and go to Phase 2 with the current $i$ and $j$.\\
    Otherwise, replace $x_i$ with $j'$. Decrement $i$ by 1 and set $j=1$. Canonicalize the resulting tableau and then repeat Phase 1.

    \item \textbf{Phase 2:} If $x_{n-i+1}$ most recently switched with an earlier entry in reading order, or $x_{n-i+1}$ has not yet moved, enter Phase 2(a) below; otherwise, skip to Phase 2(b).
    \begin{itemize}
        \item \textbf{Phase 2(a):} If $x_{n-i+1}$ precedes all $j$ and $j'$ entries in reading order, or there are no $j$ or $j'$ entries, go to Phase 2(b).\\
        Otherwise, change $\mathrm{first}(j)$ to $j'$, then perform as many valid inverse hops across $j'$ as possible.\\ 
        If $x_{n-i+1}$ precedes all $j$ and $j'$ entries in reading order, go to Phase 2(b).\\ 
        Otherwise, if the $j,j+1$-reading word has the form $\ldots j(j+1)^*x_{n-i+1}\ldots $, hop $x_{n-i+1}$ across the $j$.\\ 
        Increment $j$ by 1 and repeat Phase 2.
        \item \textbf{Phase 2(b):} Perform as many valid inverse hops across $j$ as possible. Then, if the $j,j+1$-reading word has the form $\ldots x_{n-i+1}(j')^*(j+1')\ldots $, hop across the $j+1'$.\\
        If $j<d+n-i+1$, increment $j$ by 1 and repeat Phase 2.\\
        Otherwise, go to Transition Phase 2.
    \end{itemize}
    \item \textbf{Transition Phase 2:} If $i=n$, the algorithm is complete.\\
    Otherwise, increment $i$ by 1 and set $j=a_{n-i+1}$. Replace the smallest $j$ entry in standardization order with $x_{n+1-i}$. Canonicalize the remaining tableau and repeat Phase 2.
\end{itemize}
\end{definition}

\begin{example}
    \[
    \begin{tikzpicture}
        % \node at (0,0) {
        % \begin{ytableau}
        %     \none & \none & x_4 & 1' & 1\\
        %     \none & x_2 & 1' & 1\\
        %     x_1 & x_3 & 2' & 2\\
        %     \none & 1 & 2
        % \end{ytableau}};
        % \node at (2.125,.3) {$\mathrm{evacuate}\text{ }$};
        % \draw[-{latex}] (1.25,0) -- ++(1.7,0);
        \node at (4.25,0) {
        \begin{ytableau}
            \none & \none & x_3 & 1' & 1\\
            \none & x_2 & 1' & 1\\
            x_1 & x_4 & 2' & 2\\
            \none & 1 & 2
        \end{ytableau}};
        \node at (6.25,.3) {Phase 1};
        \node at (6.25,-.3) {$a_1=3$};
        \draw[-{latex}] (5.5,0) -- ++(1.5,0);
        \node at (8.25,0) {
        \begin{ytableau}
            \none & \none & x_3 & 1' & 1\\
            \none & x_2 & 1 & 1\\
            x_1 & 1 & 2 & 2\\
            \none & 2 & 3
        \end{ytableau}};
        \node at (10.25,.3) {Phase 1};
        \node at (10.25,-.3) {$a_2=2$};
        \draw[-{latex}] (9.5,0) -- ++(1.5,0);
        \node at (12.25,0) {
        \begin{ytableau}
            \none & \none & 1' & 1 & 1\\
            \none & x_2 & 1 & 2'\\
            x_1 & 1 & 2 & 2\\
            \none & 2 & 3
        \end{ytableau}};
    \end{tikzpicture}
    \]
    \[
    \begin{tikzpicture}
        \node at (-2,.3) {Phase 1};
        \node at (-2,-.3) {$a_3=3$};
        \draw[-{latex}] (-2.75,0) -- ++(1.5,0);
        \node at (0,0) {
        \begin{ytableau}
            \none & \none & 1 & 1 & 1\\
            \none & 1' & 2' & 2\\
            x_1 & 1 & 2 & 3'\\
            \none & 2 & 3
        \end{ytableau}};
        \node at (2,.3) {Phase 1};
        \node at (2,-.3) {$a_4=4$};
        \draw[-{latex}] (1.25,0) -- ++(1.5,0);
        \node at (4,0) {
        \begin{ytableau}
            \none & \none & 1 & 1 & 1\\
            \none & 1 & 2 & 2\\
            1 & 2' & 3 & 3\\
            \none & 2 & 4
        \end{ytableau}};
    \end{tikzpicture}
    \]
    \[
    \begin{tikzpicture}
        \node at (-2,.3) {Phase 2};
        \draw[-{latex}] (-2.75,0) -- ++(1.5,0);
        \node at (0,0) {
        \begin{ytableau}
            \none & \none & 1 & 1 & 1\\
            \none & 1 & 2 & 2\\
            1 & 2' & 3 & 3\\
            \none & 2 & x_4
        \end{ytableau}};
        \node at (2,.3) {Phase 2};
        \draw[-{latex}] (1.25,0) -- ++(1.5,0);
        \node at (4,0) {
        \begin{ytableau}
            \none & \none & 1 & 1 & 1\\
            \none & 1 & 2 & 2\\
            1 & 2' & 3 & x_3\\
            \none & 2 & x_4
        \end{ytableau}};
        \node at (6,.3) {Phase 2};
        \draw[-{latex}] (5.25,0) -- ++(1.5,0);
        \node at (8,0) {
        \begin{ytableau}
            \none & \none & 1 & 1 & 1\\
            \none & 1 & 2 & x_2\\
            1 & 2' & 3 & x_3\\
            \none & 2 & x_4
        \end{ytableau}};
        \node at (10,.3) {Phase 2};
        \draw[-{latex}] (9.25,0) -- ++(1.5,0);
        \node at (12,0) {
        \begin{ytableau}
            \none & \none & 1 & 1 & 1\\
            \none & 1 & 2 & x_2\\
            1 & 2' & x_1 & x_3\\
            \none & 2 & x_4
        \end{ytableau}};
    \end{tikzpicture}
    \]
\end{example}

While the computation of the type B hopping algorithm is very different from that of the Type A version, the overall structure has some notable similarities: the conditional statement in Phase 1 detects the rectification shape of the $(j-1,j)$-subword, and moves us to a Transition Phase if the rectification shape is a single row. In the rectified setting specifically, the transition data also determines the row that each entry of $X$ lies on just before Phase 2, and Phase 2 is equivalent to sliding the corresponding box to the end of its row. These similarities are more than superficial as the following statements and proofs are nearly identical to the type A analogues:

\begin{lemma}
    The shifted hopping algorithm is coplactic.
\end{lemma}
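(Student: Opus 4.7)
The plan is to mirror the structure of the Type A proof, relying on the formal properties of hops, inverse hops, relabelings, and canonicalization, each of which has a coplactic analog in the shifted setting established in \cite{1box_b} and \cite{typeb_crystals}. The goal is to decompose every operation performed in Definition \ref{def:type-b-hop} into a short list of moves that are individually known to commute with shifted JDT, and then verify that every conditional branching (the criteria for transitioning between phases) depends only on coplactic data.

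First, I would observe that the only operations that actually alter the tableau are: (i) a hop or inverse hop across some entry $i$ or $i'$, (ii) canonicalization, and (iii) the replacement of a marked entry $x_k$ by a numerical entry (or vice versa) in a Transition Phase. Moves of type (i) were shown to be coplactic in \cite[Section 3]{1box_b} (the shifted Pieri switches), and canonicalization is just a choice of representative within the same equivalence class of shifted words, so it does not change the dual equivalence class. For moves of type (iii), the relabeling preserves the standardization of the underlying word, which is exactly the data that determines the dual equivalence class, so it is coplactic for the same reason as in the Type A argument.

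Next, I would check that each conditional test in the algorithm is itself coplactic, i.e.\ its truth value is invariant under shifted JDT slides on the ambient tableau. The tests fall into two families: the Phase 1 condition asks for the existence of a $j'$ after $x_i$ in reading order (equivalently, that the $(j-1,j)$-subword does not rectify to a single row containing only $j'$s past the marked entry), and the Phase 2 conditions ask about the canonical form of the $(j,j+1)$-subword relative to $x_{n-i+1}$. Both are statements about rectification shapes of sub-alphabets, and hence coplactic; this is the shifted analog of \cite[Proposition 4.13]{1box_a}, and should follow from the results on coplactic operators in \cite[Sections 3 and 5]{typeb_crystals}, where the local conditions governing when the operators $E_i,F_i,E_i',F_i'$ act nontrivially are shown to be coplactic.

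Putting this together, the conclusion is that each elementary step commutes with shifted JDT and each decision point depends only on coplactic data, so the entire algorithm commutes with shifted JDT. The main obstacle I anticipate is verifying the coplacticity of the slightly unusual Phase 2 conditional (the requirement that the $(j,j+1)$-reading word have the form $\ldots j(j+1)^* x_{n-i+1}\ldots$ or $\ldots x_{n-i+1}(j')^*(j+1')\ldots$): this needs to be recast as a statement about the rectification shape of a two-letter alphabet together with the position of the marked entry, and then matched to the criteria in \cite{typeb_crystals} for $F_j$ or $F_j'$ to be defined. Once that translation is made, the remaining verifications are routine.
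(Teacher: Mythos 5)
Your proposal is correct and takes essentially the same approach as the paper, which simply observes that the switches and relabeling steps were shown to be coplactic in \cite{1box_b}. Your version is considerably more detailed than the paper's one-line proof, in particular in explicitly checking that canonicalization and the phase-transition conditionals are themselves coplactic, which is a worthwhile elaboration but not a different argument.
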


\begin{proof}
    The switches and relabeling steps were shown to be coplactic in \cite{1box_b}.
\end{proof}

\begin{lemma}{\label{lemma:always-ballot-B}}
    Let $X$ be a $\SSYT$ and $T$ be a $\mathrm{LR}$ tableau extending $X$. Let $T'$, including all entries $X$, be a tableau that appears after some step of the computation of $\hop(X,T)$. Then:

\begin{enumerate}
    \item $T'$ is LR when omitting every entry of $X$ that has not reached Phase 1b.
    \item $T'$ is semistandard when omitting every entry of $X$ that has not reached Phase 1b. %{\color{red} Reader found parts 1,2 confusing..."that has no reached Phase 1b," "absorbed into," etc.}
    \item After $x_i$ performs the $j$-th step of Phase 1 or Phase 2, it is an inner corner of the sub-tableau $T'_{>j}$ containing only those entries greater than $i$.
\end{enumerate}
\end{lemma}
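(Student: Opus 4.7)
The plan is to mirror the structure of the Type A proof of Lemma \ref{alwaysballot}, substituting shifted analogues at each step. The three key inputs I would use are: (i) the shifted single-box case established in \cite{1box_b}, which proves all three claims when $|X|=1$; (ii) coplacticity of the shifted hopping algorithm together with its canonicalization and relabeling operations; and (iii) the lattice-walk characterization of shifted ballotness from Section 2.5 of \cite{1box_b}, which permits a local verification of the Littlewood--Richardson condition.

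First, I would invoke coplacticity to reduce to the case in which $X \sqcup T$ has straight (staircase) shape. Then I would induct on the number of entries of $X$ that have already been absorbed into $T$ through Transition Phase 1. For an entry of $X$ still performing Phase 1 hops, or an entry currently executing Phase 2, every individual move is a switch or relabeling identical to one in the shifted single-box algorithm of \cite{1box_b}, so statements (2) and (3), as well as statement (1) for those steps, follow directly. The Transition Phase 2 replacement of the smallest $j$-entry by $x_{n-i+1}$ is symmetric and also reduces to a single-box move once one passes to the corresponding canonicalized representative.

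The genuinely new ingredient is the absorption step in Transition Phase 1, where $x_i$ is replaced by $j'$ and the tableau is canonicalized before the next smaller entry of $X$ begins its Phase 1. Here I would combine the inductive hypothesis with the shifted single-box statement to conclude that, immediately before the absorption, $T'$ with $x_i$ removed is LR and semistandard and that $x_i$ occupies an inner corner of $T'_{>j}$. It then remains to check that writing $j'$ into the emptied cell and canonicalizing preserves LR-ness and semistandardness. Semistandardness at the new cell follows from the inner-corner property together with the row-strict/column-weak rules for primed entries. For LR-ness, I would read off the lattice walk of the canonicalized word and verify directly that appending the step contributed by the new $j'$ keeps the walk dominant, since canonicalization uses the minimum number of primes compatible with a valid representative.

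The main obstacle I anticipate is the bookkeeping around canonicalization. In Type B, whether a given $j$-entry appears primed or unprimed is representative-dependent, and the canonicalization following absorption can reassign primes among $j$-entries already present in $T'$. I would handle this by stating the inductive invariant in terms of the canonical form of the current $T'$ rather than any particular primed representative; once recast this way, the verification of ballotness reduces to the local lattice-walk check described above, and the parallel with the Type A proof of Lemma \ref{alwaysballot} becomes fully transparent.
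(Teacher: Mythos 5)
Your proposal is correct and follows essentially the same route as the paper, which simply argues that the proof of the Type A Lemma \ref{alwaysballot} carries over verbatim with the one-box case replaced by \cite[Theorem 5.7]{1box_b}: reduce to the rectified case by coplacticity, invoke the shifted single-box results for the Phase 1/Phase 2 hops, and treat the Transition Phase 1 absorption step separately by induction. Your additional care with canonicalization and the lattice-walk verification of ballotness is a reasonable (and welcome) elaboration of details the paper leaves implicit, but it does not constitute a different approach.
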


\begin{proof}
    The proof for Lemma \ref{lemma:always-ballot-B} is essentially the same as that of Lemma \ref{alwaysballot}, except we refer to \cite[Theorem 5.7]{1box_b} for the one box case.
\end{proof}

\begin{theorem}\label{thm:type-b-hop=coswitch}
    The output of $\coswitch(X,T)$ and the output of the shifted hopping algorithm agree.
\end{theorem}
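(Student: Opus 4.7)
The strategy is to mirror the proof of Theorem \ref{thm: hop=esh}, replacing each type A ingredient by its shifted analogue and appealing to the Gillespie--Levinson--Purbhoo one-box result \cite{1box_b} in place of the Gillespie--Levinson one. First, I would invoke coplacticity to reduce to the rectified case: the shifted hopping algorithm is coplactic by the preceding lemma, and $\coswitch$ is coplactic by its rectify/switch/unrectify definition, so it suffices to verify agreement when $X \sqcup T$ has straight shifted shape.

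Next, I would establish a shifted analogue of Lemma \ref{permutations} comparing the row data of $\coswitch(X,T)$ with the transition data of the shifted hopping algorithm. Using Lemma \ref{lemma:always-ballot-B}, both operations preserve the overall shifted shape of $X \sqcup T$ and send $T$ to the unique shifted LR tableau of its rectification shape, so the shape of the outer piece $X'$ is the same in both outputs. Counting boxes row-by-row then forces the weight vectors of the row data and of the transition data to coincide, while the primed/unprimed assignments of the entries of $X'$ are determined by shifted semistandardness of the output.

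The main argument is an induction on $|X|$. The base case $|X| = 1$ is precisely the one-box case proved in \cite{1box_b}. For the inductive step, I would peel off the largest entry $x_n$ of $X$: applying a single $\coswitch$ step moves $x_n$ past $T$, yielding a chain $(X', T', x_n)$ with $|X'| = n - 1$, and the inductive hypothesis governs $\coswitch(X', T')$ versus the corresponding run of the shifted hopping algorithm on the smaller input. The terminal step of Phase 2 (together with the $x_1 \leftrightarrow x_n$ relabeling in Transition Phase 1, which plays the role of the first evacuation step in the type A argument) places $x_n$ in its final cell; the row/transition weight matching from the shifted analogue of Lemma \ref{permutations} then forces the single remaining coordinate to agree, completing the step.

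The main obstacle is the bookkeeping of primed versus unprimed entries. Because shifted JDT can convert $i$ to $i'$ across the diagonal, the shifted transition data is a word in $\{1', 1, 2', 2, \ldots\}$ rather than a sequence of plain integers, and the analogue of Lemma \ref{permutations} must be upgraded to track the primed/unprimed structure. These primed diagonal interactions are also the most delicate part of the Transition Phase 1 substitution and of the Phase 2 inverse-hops-and-prime-swap step. However, the relevant case analysis has already been carried out in the one-box base case of \cite{1box_b}, and the inductive step only requires the formal matching of shifted shapes, weights, and the last cell's primed/unprimed assignment. Once the bookkeeping is in place, the argument is a word-for-word translation of the proof of Theorem \ref{thm: hop=esh}.
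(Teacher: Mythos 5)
Your proposal is correct and follows essentially the same route as the paper: the paper's own proof simply declares the argument identical to that of Theorem \ref{thm: hop=esh}, citing \cite{1box_b} (Theorems 5.19 and 5.35) for the coplacticity of the shifted hops and the one-box base case. Your write-up fleshes out the same reduction-to-rectified, shifted analogue of Lemma \ref{permutations}, and induction on $|X|$, with a reasonable (and more explicit than the paper's) acknowledgment of the primed/unprimed bookkeeping.
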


\begin{proof}
    Once again, the proof is identical to that of Theorem \ref{thm: hop=esh}. The only modification required is that the hops are shown to be coplactic by \cite[Theorem 5.19 and Theorem 5.35]{1box_b}.
\end{proof}

The similarities discussed above become more apparent in the shifted coplactic algorithm, whose statement strongly resembles that of the type A crystal algorithm. Though, we note that the actual computations are still very different as we are using the type B coplactic operators instead of type A crystal operators here.

\begin{definition}\label{def:type-b-crystal}
    (Shifted coplactic algorithm). Let $X$ be a shifted standard Young tableau such that $|X|=n$, and let $T$ be a shifted Littlewood-Richardson tableau with largest entry $d$. Then the shifted coplactic algorithm is defined as follows:

    Replace the entries of $X$ in standardization order with $-(n-1),\ldots,0$. Set $j=1$ and $i=n$.
\end{definition}

\begin{itemize}
    \item \textbf{Phase 1:} If $E'_{j-1}(T)\neq E_{j-1}(T)$, apply $E'_{j-1}\circ E^{\weight(T)_j-2}_{j-1}$. Increment $j$ by 1 and repeat Phase 1.\\
    Otherwise, go to Transition Phase.
    \item  \textbf{Transition Phase:} Replace the unique $j-1$ with $j'$ and add 1 to all values less than $j-1$. Record $a_{n-i+1}=j$. \\
    If $i=1$, go to Phase 2.\\
    Otherwise, decrement $i$ by 1 and set $j=1$. Then, repeat Phase 1.
    \item \textbf{Phase 2:} Apply the composition of operators
    \begin{center}
        $F_{d+n-i}\circ \ldots \circ F_{a_{n-i+1}+1}\circ F_{a_{n-i+1}}$
    \end{center}
    to the resulting tableau.\\
    If $i>1$, decrement $i$ by 1 and repeat Phase 2.\\
    Otherwise, replace the new $d+1,\ldots,d+n$ entries with $x_1,\ldots, x_n$ respectively.
\end{itemize}

We demonstrate the shifted coplactic algorithm using the same example as before:

\begin{example}
    \[
    \begin{tikzpicture}
        % \node at (0,0) {
        % \begin{ytableau}
        %     \none & \none & x_4 & 1' & 1\\
        %     \none & x_2 & 1' & 1\\
        %     x_1 & x_3 & 2' & 2\\
        %     \none & 1 & 2
        % \end{ytableau}};
        % \node at (2.125,.3) {$\mathrm{evacuate}$};
        % \draw[-{latex}] (1.25,0) -- ++(1.7,0);
        \node at (4.25,0) {
        \begin{ytableau}
            \none & \none & x_3 & 1' & 1\\
            \none & x_2 & 1' & 1\\
            x_1 & x_4 & 2' & 2\\
            \none & 1 & 2
        \end{ytableau}};
        \node at (6.25,.3) {relabel};
        \draw[-{latex}] (5.5,0) -- ++(1.5,0);
        \node at (8.25,0) {
        \begin{ytableau}
            \none & \none & -1 & 1' & 1\\
            \none & -2 & 1' & 1\\
            -3 & 0 & 2' & 2\\
            \none & 1 & 2
        \end{ytableau}};
        \node at (10.25,.3) {Phase 1};
        \node at (10.25,-.3) {$a_1=3$};
        \draw[-{latex}] (9.5,0) -- ++(1.5,0);
        \node at (12.25,0) {
        \begin{ytableau}
            \none & \none & 0 & 1' & 1\\
            \none & -1 & 1 & 1\\
            -2 & 1 & 2 & 2\\
            \none & 2 & 3
        \end{ytableau}};
        \node at (14.25,.3) {Phase 1};
        \node at (14.25,-.3) {$a_2=2$};
        \draw[-{latex}] (13.5,0) -- ++(1.5,0);
        \node at (16.25,0) {
         \begin{ytableau}
            \none & \none & 1' & 1 & 1\\
            \none & 0 & 1 & 2'\\
            -1 & 1 & 2 & 2\\
            \none & 2 & 3
        \end{ytableau}};
    \end{tikzpicture}
    \]
    \[
    \begin{tikzpicture}
        % \node at (-2,.3) {Phase 1};
        % \node at (-2,-.3) {$a_2=2$};
        % \draw[-{latex}] (-2.75,0) -- ++(1.5,0);
        % \node at (0,0) {
        % \begin{ytableau}
        %     \none & \none & 1' & 1 & 1\\
        %     \none & 0 & 1 & 2'\\
        %     -1 & 1 & 2 & 2\\
        %     \none & 2 & 3
        % \end{ytableau}};
        \node at (2,.3) {Phase 1};
        \node at (2,-.3) {$a_3=3$};
        \draw[-{latex}] (1.25,0) -- ++(1.5,0);
        \node at (4,0) {
        \begin{ytableau}
            \none & \none & 1 & 1 & 1\\
            \none & 1' & 2' & 2\\
            0 & 1 & 2 & 3'\\
            \none & 2 & 3
        \end{ytableau}};
        \node at (6,.3) {Phase 1};
        \node at (6,-.3) {$a_4=4$};
        \draw[-{latex}] (5.25,0) -- ++(1.5,0);
        \node at (8,0) {
        \begin{ytableau}
            \none & \none & 1 & 1 & 1\\
            \none & 1 & 2 & 2\\
            1 & 2' & 3 & 3\\
            \none & 2 & 4
        \end{ytableau}};
    \end{tikzpicture}
    \]
    \[
    \begin{tikzpicture}
        \node at (-2,.3) {Phase 2};
        \draw[-{latex}] (-2.75,0) -- ++(1.5,0);
        \node at (0,0) {
        \begin{ytableau}
            \none & \none & 1 & 1 & 1\\
            \none & 1 & 2 & 2\\
            1 & 2' & 3 & 3\\
            \none & 2 & 6
        \end{ytableau}};
        \node at (2,.3) {Phase 2};
        \draw[-{latex}] (1.25,0) -- ++(1.5,0);
        \node at (4,0) {
        \begin{ytableau}
            \none & \none & 1 & 1 & 1\\
            \none & 1 & 2 & 2\\
            1 & 2' & 3 & 5\\
            \none & 2 & 6
        \end{ytableau}};
        \node at (6,.3) {Phase 2};
        \draw[-{latex}] (5.25,0) -- ++(1.5,0);
        \node at (8,0) {
        \begin{ytableau}
            \none & \none & 1 & 1 & 1\\
            \none & 1 & 2 & 4\\
            1 & 2' & 3 & 5\\
            \none & 2 & 6
        \end{ytableau}};
    \end{tikzpicture}
    \]
    \[
    \begin{tikzpicture}
        \node at (-2,.3) {Phase 2};
        \draw[-{latex}] (-2.75,0) -- ++(1.5,0);
        \node at (0,0) {
        \begin{ytableau}
            \none & \none & 1 & 1 & 1\\
            \none & 1 & 2 & 4\\
            1 & 2' & 3 & 5\\
            \none & 2 & 6
        \end{ytableau}};
        \node at (2,.3) {relabel};
        \draw[-{latex}] (1.25,0) -- ++(1.5,0);
        \node at (4,0) {
        \begin{ytableau}
            \none & \none & 1 & 1 & 1\\
            \none & 1 & 2 & x_2\\
            1 & 2' & x_1 & x_3\\
            \none & 2 & x_4
        \end{ytableau}};
    \end{tikzpicture}
    \]
\end{example}

\begin{theorem}\label{thm:type-b-hop=crystal}
    Let $X$ be a shifted standard Young Tableau and $T$ be a shifted Littlewood-Richardson tableau. Then the outputs of the shifted coplactic and hopping algorithms agree.
\end{theorem}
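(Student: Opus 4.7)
The plan is to follow the same phase-by-phase comparison used in the Type A proof of Theorem \ref{thm: hop=crystal}, exploiting the formal analogy between the two algorithms. Specifically, I would set up an induction (or direct step-by-step matching) on the sequence of operations performed, showing that after every step of the shifted coplactic algorithm the current tableau agrees with the tableau produced after the corresponding step of the shifted hopping algorithm, once we account for the relabeling (i.e.\ the negative values standing in for the entries of $X$).

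First I would handle Phase 1. The key observation is that in the hopping algorithm, when $x_i$ is faced with a $(j-1)$-subword of the form $\ldots j' (j)^* x_i (j+1')?\ldots$, the hop across the $j'$ followed by the hop across the $j$ has the effect of isolating precisely the $j'$ that preceded $x_i$ while lowering all other $j$'s (primed or unprimed) to $j-1$'s. On the coplactic side, the composition $E'_{j-1}\circ E_{j-1}^{\weight(T)_j-2}$ applies $E_{j-1}$ enough times to leave exactly two $j$'s (one primed, one unprimed), then uses the primed operator $E'_{j-1}$ to perform the final raise — which by the definition of the primed operators (see \cite[Definitions 3.3 and 5.4]{typeb_crystals}) precisely isolates the $j'$ that was adjacent to $x_i$. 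The equality of outputs follows from the fact that both operations rely only on the $(j-1,j)$-subword of the combined tableau and produce the same isolated entry; the condition $E'_{j-1}(T)\neq E_{j-1}(T)$ triggering the Transition Phase is exactly the coplactic analog of the hopping condition ``there is no $j'$ after $x_i$ in reading order''. The Transition Phase steps (replacing the isolated entry with $j'$ and incrementing the remaining smaller entries) agree trivially, since both are identical relabeling operations.

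Next I would handle Phase 2. Here, the composition $F_{d+n-i}\circ\cdots\circ F_{a_{n-i+1}}$ produces a chain of lowering operators; I would argue inductively that each $F_j$ in the chain corresponds to the Phase 2 step of the hopping algorithm at index $j$. Concretely, $F_j$ acts on the last unmatched $j$ in the $(j,j+1)$-subword, which is the first $j$ in reading order whose suffix is tied for $(j,j+1)$ — precisely the entry the hopping algorithm targets when swapping $x_{n-i+1}$ with a $j$ entry. The delicate case is the interaction with primed entries: Phase 2(a) and Phase 2(b) in the hopping algorithm distinguish whether $x_{n-i+1}$ has most recently moved forward or backward in reading order, and this distinction must match the action of $F_j$ on primed vs. unprimed entries. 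This will be argued using the compatibility between the unprimed $F_j$ and the switching/inverse-switching rules established in \cite[Theorems 5.19 and 5.35]{1box_b}, together with the observation that at the moment $F_j$ is applied, the primed $(j+1)'$ (if present) is always positioned correctly relative to the moving marker.

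The main obstacle, as in Type A, is verifying that the relative orderings of primed and unprimed entries are preserved step-by-step so that the two algorithms stay synchronized throughout; once this synchronization is established for a single index $j$, the induction on $j$ (and then on $i$) closes the argument. As the authors note at the start of Section \ref{sec:type B}, the structure of the proof is formally identical to Theorem \ref{thm: hop=crystal}, with the only substantive novelty being the careful bookkeeping of primes, handled via the coplactic properties of $E'_i, F'_i$ from \cite{typeb_crystals}.
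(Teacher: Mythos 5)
Your proposal is correct and follows essentially the same route as the paper: a phase-by-phase synchronization of the two algorithms, with the substance of the argument delegated to Theorems 5.19 and 5.35 of \cite{1box_b}, which establish the compatibility between shifted hops and the coplactic operators $E_i, E'_i, F_i, F'_i$. The paper's own proof is simply a terser version of this, noting that the relabeling steps coincide and citing those two theorems for the agreement of Phases 1 and 2.
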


\begin{proof}
    The relabeling steps of the local algorithms are effectively the same; we simply use $-(n-1),\ldots, 0$ in the coplactic algorithm for the same $y_1,\ldots,y_n$ entries in the hopping algorithm. Then, the fact that Phases 1 and 2 of the shifted hopping algorithm and the shifted coplactic algorithm agree follows from Theorems 5.19 and 5.35 respectively of \cite{1box_b}.
\end{proof}

This gives us the analogous statements of Definitions \ref{def:type_a_hop} and \ref{def:type-A-crystal-alg}. The remaining results from Section \ref{sec:related-results} may also be extended to the shifted algorithms without modifying the proofs. In particular, we obtain local algorithms which compute shifted $\coplacticEvac$ and shifted $\alg$ by omitting the initial evacuation step of either local algorithm.

We also give an example of a fixed point under monodromy. This example also has the property that some boxes perform non-adjacent hops in both Phase 1 and Phase 2.

\begin{example}
\[
\begin{tikzpicture}
    \node at (0,0){
    \begin{ytableau}
        \none &\none &\none &\none &\none & x_3 & 1'\\
        \none &\none &\none &\none &\none & 1 & 1\\
        \none &\none & x_1 & x_2 & 1'\\
        \none &\none & 1 & 1 & 1\\
        x_4 & 1' \\
        \none & 1
    \end{ytableau}};
    \node at (2.75,.3) {$\hop(X,T)$};
    \draw[-{latex}] (1.875,0) -- ++(1.75,0);
    \node at (5.5,0){
    \begin{ytableau}
        \none &\none &\none &\none &\none & 1' & 1\\
        \none &\none &\none &\none &\none & 1 & x_3\\
        \none &\none & 1' & 1 & 1\\
        \none &\none & 1 & x_1 & x_2\\
        1 & 1 \\
        \none & x_4
    \end{ytableau}};
    \node at (8.375,.3) {$\shuffle(X,T)$};
    \draw[-{latex}] (7.25,0) -- ++(2.25,0);
    \node at (11,0){
    \begin{ytableau}
        \none &\none &\none &\none &\none & x_3 & 1'\\
        \none &\none &\none &\none &\none & 1 & 1\\
        \none &\none & x_1 & x_2 & 1'\\
        \none &\none & 1 & 1 & 1\\
        x_4 & 1' \\
        \none & 1
    \end{ytableau}};
\end{tikzpicture}
\]
\end{example}

Finally, we state the reverse of each algorithm. The coplactic statement of local $\alg{}$ is fairly straightforward to invert since $E_i$ and $F_i$ are partial inverses, the only difficulty is detecting when an entry finishes Phase 2:

\begin{definition}
    (Inverse shifted coplactic algorithm). Let $T$ be a shifted Littlewood-Richardson tableau, and let $X$ be a shifted standard tableau of size $n$ extending $T$. Let $x_i$ be the $i$-th smallest entry of $X$ in standardization order, and let $d$ be the largest entry of $T$. Then the inverse shifted coplactic algorithm is defined as follows:

    Replace each $x_i$ with $d+i$. Begin the algorithm in Phase 1 with $i=1$.

    \begin{itemize}
        \item \textbf{Reverse Phase 2}: Apply the coplactic operators $E_{d+i-1},E_{d+i-2},\ldots$ until the first index $j$ such that $E_{j-1}$ is undefined. Then, record $a_i=j$.\\
        If $i<n$, increment $i$ by 1 and repeat Reverse Phase 2.\\
        Otherwise, go to Reverse Phase 1 with $i=n$.
        \item \textbf{Reverse Phase 1}: Subtract 1 from all entries with value less than $a_i$. Replace the smallest $a_i$ in standardization order with $a_i-1$. Apply the following sequence of coplactic operators to the resulting tableau:
        \[
        F_0^{wt(T)_0-2}\circ F'_0\circ F_1^{wt(T)_1-2}\circ F'_1\circ \ldots \circ F_{a_i-2}^{wt(T)_{a_i-2}-2}\circ F'_{a_i-2}.
        \]

        Then, replace the unique 0 entry with $x_{n-i+1}$.

        If $i=1$, the algorithm is complete.

        Otherwise, decrement $i$ by 1 and repeat Reverse Phase 1
    \end{itemize}
\end{definition}

The hopping algorithm, however, requires specific choices of representatives which do not always match those used in the forward algorithm (Definition \ref{def:type-b-hop}). We now state the inverse hopping algorithm below.

\begin{definition}
    (Inverse shifted hopping algorithm). Let $T$ be a shifted Littlewood-Richardson tableau, and let $X$ be a shifted standard tableau of size $n$ extending $T$. Then the inverse of the shifted hopping algorithm can be computed as follows:

    Begin with $T$ in canonical form. Let $d$ be the \emph{current} largest entry of the tableau, note that $d$ will possibly change as we perform steps of the inverse algorithm. Set $j=d$ and $i=1$. If the reading word formed by replacing $x_i$ with $d+1$ is reverse-ballot, go to Reverse Transition Phase 2; otherwise, begin in reverse Phase 2(b) below.

    \begin{itemize}
        \item \textbf{Reverse Phase 2}: If $x_i$ precedes all $j$ and $j'$ entries in reading order, change $\mathrm{first}(j)$ to $j'$.\\
        If $x_i$ most recently switched with an earlier entry in reading order, or $x_i$ has not yet moved, enter Reverse Phase 2(b) below; otherwise, go to Reverse Phase 2(a).

        \begin{itemize}
        \item \textbf{Reverse Phase 2(b)}: Have $x_i$ perform as many valid hops across $j$ as possible.\\
        If $x_i$ now precedes all $j$ and $j'$ entries in reading order, go to Reverse Phase 2(a).\\
        If replacing $x_i$ by $j$ results in a reverse-ballot $j-1,j$-subword, go to Reverse Transition Phase 2.\\
        Otherwise, if the $j-1,j$-reading word has the form $\ldots (j')(j-1')^*x_i\ldots$, inverse hop $x_i$ across the $j'$ entry.\\
        Decrement $j$ by 1 and repeat Reverse Phase 2.

        \item \textbf{Reverse Phase 2(a)}: Perform as many valid hops across $j'$ as possible.\\
        If replacing $x_i$ by $j'$ results in a reverse-ballot $j-1,j$-subword, go to Reverse Transition Phase 2.\\  %if initial stuff go back to transition phase otherwise repeat phase 2b
        Otherwise, if the $j-1,j$-reading word has the form $\ldots x_i(j)^*(j-1)\ldots$, inverse hop $x_i$ across the $j-1$ entry.\\
        Decrement $j$ by $1$ and repeat Reverse Phase 2.

        \item \textbf{Reverse Transition Phase 2:} If $i=n$, replace $x_n$ with $x_1$. Decrement $j$ by $1$ and go to Reverse Phase 1.\\
        Otherwise, replace $x_i$ with $j'$ and record $a_{n-i+1}=j$. Increment $i$ by 1 and set $j=d$ where $d$ is the current largest entry. Canonicalize the resulting tableau.\\
        If the word formed by replacing $x_i$ with $d+1$ is reverse-ballot, repeat Reverse Transition Phase 2.\\
        Otherwise, Go to Reverse Phase 2(b).
        \end{itemize}

        \item \textbf{Reverse Phase 1}: 
        If $j=0$, go to Reverse Transition Phase 1.\\
        Otherwise, change $\mathrm{first}(j)$ to $j'$. Inverse hop $x_{n+i-1}$ across a $j$, then inverse hop across a $j'$. Decrement $j$ by 1, then repeat Reverse Phase 1.

        \item  \textbf{Reverse Transition Phase 1:}
        If $i=1$, the algorithm is complete.\\
        Otherwise, decrement $i$ by 1 and set $j=a_{n-i+1}-1$. Replace the smallest $a_{n-i+1}$ in standardization order with $x_{n+i-1}$. Canonicalize the resulting tableau, then go to Reverse Phase 1.
    \end{itemize}
\end{definition}

\begin{theorem}
    The reverse algorithms compute the inverse of those in Definitions \ref{def:type-b-hop} and \ref{def:type-b-crystal}.
\end{theorem}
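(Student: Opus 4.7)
My plan is to prove the theorem in two stages, mirroring the structure used for the forward algorithms: first establish that the reverse shifted coplactic algorithm inverts Definition \ref{def:type-b-crystal}, and then show that the inverse hopping algorithm agrees with the inverse coplactic algorithm, so that by Theorem \ref{thm:type-b-hop=crystal} (applied in the forward direction) it inverts Definition \ref{def:type-b-hop} as well. This reduces the problem to a single direct verification plus a comparison of two local procedures.

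For the coplactic inversion, I would trace the forward algorithm and observe that it is, up to relabeling, a composition of the partial bijections $E_i, E'_i, F_i, F'_i$ from \cite{typeb_crystals}, which are known to be partial inverses of one another. The forward Phase 2 applies the chain $F_{d+n-i} \circ \cdots \circ F_{a_{n-i+1}}$ to the marked entry; Reverse Phase 2 of the inverse algorithm applies $E_{d+i-1}, E_{d+i-2}, \ldots$ until $E_{j-1}$ is undefined, and I would argue that this stopping index is precisely $a_{n-i+1}$. Here the key point is that after the forward Phase 1, the marked entry was a $j$ lying in an outer-corner-like position of the LR subtableau of entries $\geq j$ (Lemma \ref{lemma:always-ballot-B}(3)), so $F_{j-1}$ was the first operator that could act on it and, dually, $E_{j-1}$ is the first that fails on the image. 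The reverse of Phase 1 is handled similarly: the composition $F_0^{\cdots}\circ F'_0\circ \cdots \circ F'_{a_i-2}$ in Reverse Phase 1 precisely undoes the chain $E'_{j-1}\circ E_{j-1}^{\mathrm{wt}(T)_j-2}$ iterated by the forward algorithm, because the weight shift $\mathrm{wt}(T)_j-2$ counts exactly the raising operations performed, and the single primed operator $E'_{j-1}$ records the transition step. The relabeling/canonicalization bookkeeping is the same reversal as in the forward direction.

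For the hopping inversion, I would argue as in the proof of Theorem \ref{thm:type-b-hop=crystal}: each hop or inverse hop in the local hopping algorithm matches a specific coplactic operator on the appropriate representative (by \cite[Theorems 5.19 and 5.35]{1box_b}), so showing that the inverse hopping algorithm applies the same sequence of operators as the inverse coplactic algorithm suffices. The conditions ``if the $j{-}1,j$-reading word has the form $\ldots(j')(j{-}1')^*x_i\ldots$'' and ``replacing $x_i$ by $j$ results in a reverse-ballot $j{-}1,j$-subword'' are the local detectors for, respectively, the next $F_{j-1}$ (or $F'_{j-1}$) being defined and the chain of raising operators having terminated; matching them step by step with the coplactic algorithm's conditions gives the agreement. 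The canonicalization steps ensure both algorithms operate on the same representative at each stage, which is the same subtlety already handled in Definition \ref{def:type-b-hop}.

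The main obstacle I expect is not any one step, but the careful case analysis at phase transitions, where the forward algorithm may have exited Phase 1 or Phase 2 at various points and the reverse algorithm must correctly detect this from the tableau alone. Specifically, one must check that the ``reverse-ballot $(j{-}1,j)$-subword'' criterion really does fire at the inverse image of each forward transition, and nowhere else, so that $a_i$ is recovered faithfully. This is the type B analogue of the subtlety resolved by Lemma \ref{alwaysballot}(3) and Lemma \ref{lemma:always-ballot-B}(3), which guarantee that the marked entry sits at an inner corner of the relevant subtableau throughout; applying these lemmas to the intermediate tableaux encountered during the reverse algorithms should close the argument. A small induction on the step count, or equivalently on $(i,j)$ in lexicographic order, then finishes both parts cleanly.
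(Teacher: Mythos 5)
Your proposal is correct in substance, and it is considerably more detailed than the paper's own justification, which consists of a single sentence asserting that each step of each reverse algorithm visibly inverts the corresponding forward step and that the phase transitions match up. Where you genuinely diverge is in the treatment of the hopping half: rather than directly verifying that each reverse hop undoes the corresponding forward hop, you prove only that the reverse coplactic algorithm inverts Definition \ref{def:type-b-crystal} (using that $E_i, E'_i, F_i, F'_i$ are partial inverses and that the stopping index of the $E$-chain recovers $a_{n-i+1}$), and then deduce the hopping case by showing the two \emph{reverse} algorithms agree and invoking the forward agreement Theorem \ref{thm:type-b-hop=crystal}. This mirrors the structure the paper itself uses in type A (where the reverse crystal algorithm is shown to invert the forward one, and the reverse hopping algorithm is separately shown to agree with it), and it buys you the ability to reuse \cite[Theorems 5.19 and 5.35]{1box_b} for the hop-to-operator dictionary instead of re-deriving the inversion of each local move. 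The paper's direct route is shorter on the page but leaves the phase-transition detection entirely unexamined; your identification of the reverse-ballot $(j-1,j)$-subword criterion as the point needing verification --- that it fires exactly at the inverse image of each forward transition, guaranteed by the inner-corner statement of Lemma \ref{lemma:always-ballot-B}(3) applied to the intermediate tableaux --- is precisely the content that the paper's ``straightforward to see'' elides, so your version is the more honest account of what actually needs to be checked.
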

\begin{proof}
    It is straightforward to see that each step inverts the corresponding step of the shifted hopping and coplactic algorithms respectively and that the transition from Reverse Phase 2 to Reverse Phase 1 corresponds to the transition step in the forward algorithm.
\end{proof}

\bibliographystyle{plain}
\bibliography{refs}

\end{document}